\newtheorem{theorem}{Theorem}[section]
\newtheorem{corollary}[theorem]{Corollary}
\newtheorem{lemma}[theorem]{Lemma}
\newtheorem{proposition}[theorem]{Proposition}
\theoremstyle{definition}
\newtheorem{definition}[theorem]{Definition}
\theoremstyle{remark}
\newtheorem{remark}[theorem]{Remark}
\newcommand{\F}{\mathbf{F}}
\newcommand{\K}{\mathbf{K}}
\newcommand{\G}{\mathbf{G}}
\newcommand{\A}{\mathcal{A}}
\newcommand{\forbG}{\mathrm{Forb}_{\mathrm{G}}}
\DeclareMathOperator{\bnd}{bnd}
\DeclareMathOperator{\forb}{Forb}
\DeclareMathOperator{\arity}{arity}
\DeclareMathOperator{\Wd}{Wd}
\DeclareMathOperator{\PW}{\mathbf{PW}}
\title{Indivisibility for Classes of Graphs}
\author[Guingona, Nusbaum, Padamsee, Parnes, Pippin, Zinman]{Vince Guingona, Felix Nusbaum, Zain Padamsee, Miriam Parnes, Christian Pippin, and Ava Zinman}
\thanks{This work was supported by Towson University and the National Science Foundation grant DMS-2149865.}
\begin{document}

\begin{abstract}
	We examine indivisibility for classes of graphs.  We show that the class of hereditarily $\alpha$-sparse graphs is indivisible if and only if $\alpha > 2$.  Additionally, we show that the following classes of graphs are indivisible: perfect graphs, cographs, and chordal graphs, and the following classes of graphs are not indivisible: threshold graphs, split graphs, and distance-hereditary graphs.
\end{abstract}

\maketitle

%%%%%%%%%%%%%%%%%%%%%%%%%%%
%% SECTION: Introduction %%
%%%%%%%%%%%%%%%%%%%%%%%%%%%

\section{Introduction}\label{Section_Introduction}

In this paper, we study indivisibility, a coloring property of classes of structures, which is related to the Ramsey property.  We consider this property on several classes of graphs, including hereditarily sparse graphs and graph classes that are characterized by forbidding certain induced subgraphs.

We say that a class of structures is \emph{indivisible} if, given any element $A$ of the class and any number of colors $k$, there exists another (ostensibly much larger) element of the class $B$ such that, no matter how we color $B$ with $k$ colors, there exists a monochromatic copy of $A$ in $B$.  This property is related to the Ramsey property -- in fact, if a class contains only one singleton structure up to isomorphism, then it is equivalent to the Ramsey property on singletons.  Indivisibility has been studied extensively, in particular by N.~Sauer; for example, see \cite{EZS89, EZS91, Sau14, Sau20}.  For example, it is known that the class of all finite simple graphs is indivisible, as is the class of all finite $K_n$-free graphs for $n \geq 3$ \cite{EZS89, Folk70, KR}.  In this paper, we examine indivisibility for other classes of graphs, including classes that do not have the amalgamation property and, hence, have no generic limit.

The first and fourth authors became interested in indivisibility via its relationship to configurations and the classification of theories by combinatorial complexity; see \cite{GP, GPS}.  Assuming that the index class of structures is indivisible generates enough ``uniformity'' to make understanding the configurations easier (without requiring the restrictive notion of being Ramsey).  Though the authors originally assumed that the index classes had the strong amalgamation property in \cite{GP}, this assumption was dropped in their later work with L.~Scow \cite{GPS}, so many of the classes presented in this paper work in the more general setting.

The motivation for studying the particular graph classes in this paper primarily comes from several papers in model theory.  One paper, by J.~Brody and M.~C.~Laskwoski \cite{BL}, examines model theoretic properties of the generic limit of the class of hereditarily $\alpha$-sparse graphs.  We borrow much of our terminology from that paper, including the function $\delta_\alpha$, and the classes $\K_\alpha$ and $\K^+_\alpha$.  In another paper \cite{LT}, M.~C.~Laskowski and C.~Terry study ``speeds'' of hereditary classes of $L$-structures in a finite relational language $L$.  That paper focuses on classes of structures with the hereditary property, since these classes have a well-defined boundary and, hence, have a classifying universal theory.  Restricting to classes of graphs, in \cite{LW}, A.~H.~Lachlan and R.~Woodrow classify the classes of graphs have the amalgamation property based on their boundaries (see Section \ref{Section_Amalgamation} for more details).

This paper consists of five further sections.  Section \ref{Section_Preliminatries} states some established definitions and previous results.  Section \ref{Section_SparseGraphs} examines hereditarily sparse graphs.  We prove that, for all positive real numbers $\alpha$, the class of hereditarily $\alpha$-sparse graphs is indivisible if and only if $\alpha > 2$ (Theorem \ref{Theorem_Sparseindivisible}) and the class of all strictly hereditarily $\alpha$-sparse graphs is indivisible if and only if $\alpha \geq 2$ (Theorem \ref{Theorem_Sparseindivisible2}).  Section \ref{Section_Forbidden} examines classes of graphs defined by forbidding induced subgraphs.  We prove that the following classes of graphs are indivisible: perfect graphs, cographs, and chordal graphs, and we prove that the following classes of graphs are not indivisible: threshold graphs, split graphs, and distance-hereditary graphs (Theorem \ref{Theorem_GraphIndivisible}).  In Section \ref{Section_Amalgamation}, we discuss indivisibility for classes of graphs with the amalgamation property.  Finally, Section \ref{Section_Conclusion} has some concluding remarks and open problems.

%%%%%%%%%%%%%%%%%%%%%%%%%%%%
%% SECTION: Preliminaries %%
%%%%%%%%%%%%%%%%%%%%%%%%%%%%

\section{Preliminaries}\label{Section_Preliminatries}

The objects considered in this paper are primarily simple graphs.  However, the paper is written from a model-theoretic perspective, so some of the notation may seem strange to combinatorialists.  For example, we will use $G$ to denote the graph $G$, but, abusing notation, we will also use it to denote the vertex set of $G$.  On the other hand, we use $E^G$ to denote the edge set of $G$.  Note that $E^G$ is a set of \emph{ordered} pairs that is irreflexive and symmetric (i.e., for all $a \in G$, $(a,a) \notin E^G$ and, for all $a, b \in G$, if $(a,b) \in E^G$, then $(b,a) \in E^G$).  So $|E^G|$ is twice the number of edges in $G$.  The important notion of ``substructure'' for us will be \emph{induced subgraph} instead of merely subgraph.

Let $L$ be a finite relational language.  For most of this paper, we will restrict to the language of graphs, $L = \{ E \}$, where $E$ is a binary relation symbol.  Let $\K$ be a class of finite $L$-structures that is closed under isomorphism.  Again, we will typically consider subclasses of $\G$, the class of all finite simple graphs.  For $A, B \in \K$, when we write $A \subseteq B$, we mean that $A$ is a substructure of $B$ (which, for graphs, means that $A$ is an induced subgraph of $B$).  An \emph{embedding} from $A$ to $B$ is an injective function $f : A \rightarrow B$ that preserves the interpretations all of the relation symbols in $L$.  For graphs, this means that $a$ and $b$ are adjacent if and only if $f(a)$ and $f(b)$ are adjacent.

We will mostly be interested in classes that have the hereditary property:

\begin{definition}[Hereditary Property]\label{Definition_HP}
	We say that $\K$ has the \emph{hereditary property} if, for all $A \in \K$ and $B \subseteq A$, $B \in \K$.
\end{definition}

Together with the hereditary property, the amalgamation property guarantees the existence of a generic limit structure (via Fra\"{i}ss\'{e}'s Theorem; see Theorem 7.1.2 of \cite{Hodges} or see \cite{Fr2000}).

\begin{definition}[Amalgamation Property]\label{Definition_AP}
	We say that $\K$ has the \emph{amalgamation property} if, for all $A, B_0, B_1 \in \K$ and for all embeddings $f_0 : A \rightarrow B_0$ and $f_1 : A \rightarrow B_1$, there exists $C \in \K$ and embeddings $g_0 : B_0 \rightarrow C$ and $g_1 : B_1 \rightarrow C$ such that $g_0 \circ f_0 = g_1 \circ f_1$.
\end{definition}

%% NEW VERSION %%
We say further that $\K$ has the \emph{strong amalgamation property} if, in the above setup, we require that $g_0(B_0) \cap g_1(B_1) = g_0(f_0(A))$.

For example, $\G$ itself has the (strong) amalgamation property, and the generic limit of $\G$ is the random graph (sometimes called the Rado graph).  However, it turns out that most classes of graphs do not have the amalgamation property, as noted in Corollary \ref{Corollary_AmalgamationProperty} below.

Throughout this paper, for any positive integer $n$, let $[n]$ denote the set of the first $n$ positive integers; i.e.,
\[
[n] = \{ 1, 2, \dots, n \}.
\]
For a positive integer $n$, let $K_n$ denote the complete graph on $n$ vertices, let $N_n$ denote the graph with $n$ vertices and no edges, let $P_n$ denote the path on $n$ vertices, and let $C_n$ denote the cycle on $n$ vertices (when $n \ge 3$).  We will assume that each of these graphs has universe $[n]$.

If $G$ is a graph, a \emph{clique} of $G$ is an induced subgraph of $G$ that is a complete graph and an \emph{independent set} of $G$ is an induced subgraph of $G$ that is a null graph.  For a graph $G$ and $a \in G$, let $\deg_G(a)$ denote the \emph{degree} of $a$ in $G$; that is,
\[
\deg_G(a) = \left| \left\{ b \in G : (a,b) \in E^G \right\} \right|.
\]
If $G$ is understood, we will drop references to it and write $\deg(a)$.  A graph $G$ is called \emph{$k$-regular} if, for all vertices $a \in G$, $\deg_G(a) = k$.

For us, graph colorings will be vertex colorings, and we are often interested in colorings that are not necessarily ``proper.''  A $k$-coloring of an $L$-structure $A$ is a function $c : A \rightarrow [k]$.  For $L$-structures $A$ and $B$ and a $k$-coloring $c$ of $B$, we say that $B$ has a \emph{monochromatic copy of $A$ with respect to $c$} if there exists an embedding $f : A \rightarrow B$ such that, for all $a_0, a_1 \in A$,
\[
c(f(a_0)) = c(f(a_1)).
\]
It will often be convenient to think of $f$ itself, rather than its image, as the ``copy'' of $A$ in $B$.

\begin{definition}[Indivisible]\label{Definition_Ind}
	We say that $\K$ is \emph{indivisible} if, for all $A \in \K$ and positive integers $k$, there exists $B \in \K$ such that, for every $k$-coloring $c$ of $B$, $B$ has a monochromatic copy of $A$ with respect to $c$.
\end{definition}

The notion of indivisibility for classes of structures is related to the notion of indivisibility for structures.  An $L$-structure $A$ is \emph{indivisible} if, for all positive integers $k$ and all $k$-colorings $c$ of $A$, there exists a monochromatic copy of $A$ in $A$ with respect to $c$.  It follows from Theorem 1 of \cite{EZS91} that, if $A$ is countable and indivisible as a structure, then its age is indivisible as a class of structures; for more on this, see Corollary 2.14 of \cite{GPS}.  If a class of structures is indivisible, then is its generic limit indivisible?  Not necessarily; see, for instance, Example 1 of \cite{Sau03} or Example 2.15 of \cite{GPS}.

Indivisibility is also related to the Ramsey Property.  Given $A \in \K$, we say that $\K$ has the \emph{$A$-Ramsey Property} if, for all $B \in \K$ and for all positive integers $k$, there exists $C \in \K$ such that, for any $k$-coloring $c$ of the $L$-embeddings of $A$ into $C$, there exists an $L$-embedding $g : B \rightarrow C$ and $i \in [k]$ such that, for all $L$-embeddings $f : A \rightarrow B$, $c(g \circ f) = i$.  When $A$ is a singleton structure and there is only one singleton structure in $\K$ up to isomorphism, then the $A$-Ramsey Property is equivalent to indivisibility (as $L$-embeddings of the only singleton structure into some $C \in \K$ can be identified with the elements of $C$).  Note that these two notions differ when there is more than one singleton structure up to isomorphism in $\K$.
%% NEW VERSION %%
If there exists a structure $A \in \K$ which has at least two different singleton substructures up to isomorphism, then $\K$ cannot be indivisible, since one can color any $B \in \K$ based on the isomorphism type of each singleton substructure; however, it may still have the Ramsey property for some singleton structure in $\K$.

As is the case with the Ramsey Property, it turns out that two colors suffice to show indivisibility.

\begin{theorem}[Theorem 1 of \cite{EZS91}]\label{Theorem_TwoColors}
	A class $\K$ is indivisible if and only if, for all $A \in \K$, there exists $B \in \K$ such that, for all $2$-colorings $c$ of $B$, $B$ has a monochromatic copy of $A$ with respect to $c$.
\end{theorem}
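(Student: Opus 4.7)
The forward direction is immediate from the definition of indivisibility, since the $2$-coloring condition is just the $k=2$ instance of indivisibility. So the content of the theorem is the converse: assuming the $2$-coloring condition for every $A \in \K$, we must upgrade it to an arbitrary number of colors. I would do this by induction on $k \geq 2$, with the base case $k=2$ handed to us by hypothesis.

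For the inductive step, suppose we have the indivisibility property for $k$ colors and fix $A \in \K$. First, apply the inductive hypothesis to $A$ to get some $B' \in \K$ such that every $k$-coloring of $B'$ yields a monochromatic copy of $A$. Then apply the $2$-coloring hypothesis to $B'$ to get $B \in \K$ such that every $2$-coloring of $B$ yields a monochromatic copy of $B'$. I claim $B$ witnesses the $(k+1)$-color indivisibility for $A$. Given any coloring $c : B \to [k+1]$, form the auxiliary $2$-coloring $c' : B \to \{0,1\}$ defined by $c'(b) = 0$ if $c(b) = 1$ and $c'(b) = 1$ otherwise. By choice of $B$, there is an embedding $g : B' \to B$ whose image is $c'$-monochromatic. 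If that monochromatic value is $0$, then $g(B')$ is $c$-monochromatic of color $1$, so composing with any embedding $A \hookrightarrow B'$ gives a $c$-monochromatic copy of $A$. If instead the value is $1$, then $c \circ g$ takes values only in $\{2,\ldots,k+1\}$, so it is (up to relabeling) a $k$-coloring of $B'$; by the choice of $B'$, there is an embedding $f : A \to B'$ with $c \circ g \circ f$ constant, and $g \circ f : A \to B$ is the desired monochromatic copy.

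I do not expect any real obstacle here: the argument is the standard ``iterate the $2$-color case'' trick, and it only uses the hypothesis, the definition of embedding, and closure of $\K$ under the objects produced (which is automatic because $B'$ and $B$ are chosen inside $\K$ by hypothesis). No use of the hereditary property, amalgamation, or any special feature of graphs is needed, so the theorem holds for arbitrary isomorphism-closed classes $\K$ of finite $L$-structures. The only mild subtlety worth flagging in the write-up is that the relabeling in the second case is legitimate because indivisibility depends only on the partition of $B'$ induced by the coloring, not on the particular labels of the color classes.
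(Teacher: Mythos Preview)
Your argument is correct and is the standard proof of this fact. Note, however, that the paper does not actually supply its own proof of this theorem: it is stated with a citation to \cite{EZS91} and used as a black box. So there is no ``paper's proof'' to compare against; your write-up would serve as a self-contained justification that the paper omits. One small point worth making explicit in the first case of your inductive step: you need that $A$ embeds into $B'$, which follows because applying the constant $k$-coloring to $B'$ already forces a (trivially monochromatic) copy of $A$ inside $B'$.
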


It is frequently helpful to describe a class of graphs by its boundary, which always exists and is unique up to isomorphism if the class has the hereditary property.

\begin{definition}[Boundary]\label{Definition_Bnd}
	Suppose that $\K$ has the hereditary property.  Then, a set $\A$ of $L$-structures is a \emph{boundary} of $\K$ if
	\begin{enumerate}
		\item for all $A, B \in \A$, there exists no embedding $f : A \rightarrow B$; and
		\item if $C$ is a finite $L$-structure, then $C \in \K$ if and only if for each $A \in \A$, there is no embedding $f : A \rightarrow C$.
	\end{enumerate}
\end{definition}

\begin{theorem}[\cite{Sau20}]
	If $\K$ has the hereditary property, then the boundary of $\K$ is unique up to isomorphism.
\end{theorem}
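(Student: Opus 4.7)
The plan is to show that for any two boundaries $\A, \A'$ of $\K$, each $A \in \A$ is isomorphic to some $A' \in \A'$, and vice versa. First, I would observe that every element of $\A$ lies outside $\K$: applying clause (2) of the definition to $C = A$ (which is a finite $L$-structure in the intended setting) with the identity embedding $A \to A$, we conclude $A \notin \K$. The same holds for $\A'$.

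The key step is a ``sandwiching'' argument. Fix $A \in \A$. Since $A \notin \K$, clause (2) for $\A'$ yields some $A' \in \A'$ with an embedding $f : A' \to A$. Since $A' \notin \K$ by the same reasoning, clause (2) for $\A$ yields some $A'' \in \A$ with an embedding $g : A'' \to A'$. Then $f \circ g : A'' \to A$ is an embedding between two elements of $\A$, and clause (1) forbids any such embedding unless the source and target are the same element, so $A'' = A$ (here one must read clause (1) as applying to non-isomorphic pairs, since identity maps are always embeddings). Because $A$ is finite, the inequalities $|A| = |A''| \leq |A'| \leq |A|$ force $f$ and $g$ to be bijective embeddings, hence isomorphisms. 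This produces a map sending each $A \in \A$ to an isomorphic $A' \in \A'$.

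Running the argument symmetrically, starting with $A' \in \A'$, produces the reverse correspondence, and composition of the two maps is the identity on isomorphism classes (by the same finiteness-plus-clause-(1) argument). This establishes the bijection between $\A$ and $\A'$ up to isomorphism.

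The main obstacle is really just getting the reading of clause (1) right: taken literally with $A = B$, it would rule out the identity embedding, so one must interpret it as a statement about distinct (that is, non-isomorphic) elements, and then verify that this reading is consistent with the sandwich-plus-finiteness argument forcing genuine isomorphism rather than merely the existence of embeddings in both directions. Once that is settled, the rest is formal and uses only the definition of boundary together with the fact that an injective self-map of a finite set is a bijection.
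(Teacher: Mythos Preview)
The paper does not supply its own proof of this theorem; it merely cites the result from \cite{Sau20} and moves on. So there is nothing in the paper to compare your argument against.

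That said, your argument is correct and is the standard one. A minor remark on the point you flag as the ``main obstacle'': the cleanest reading of clause~(1) is ``for all \emph{distinct} $A,B\in\A$'' rather than ``for all non-isomorphic $A,B$''. With that reading you get two things at once: first, $\A$ contains at most one structure from each isomorphism class (since an isomorphism between distinct elements would itself be a forbidden embedding), and second, the composite $f\circ g:A''\to A$ forces $A''=A$ on the nose. Your finiteness argument then gives $A\cong A'$ as you wrote, and the one-per-class fact makes the correspondence $\A\to\A'$ a genuine bijection rather than just a surjection on isomorphism types. With the ``non-isomorphic'' reading you only directly get $A''\cong A$, and you would need a separate remark to rule out multiple isomorphic copies in $\A$; the ``distinct'' reading avoids that detour.
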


Since the boundary of $\K$ is unique when $\K$ has the hereditary property, we will give it a name, $\bnd(\K)$.  For example, $\bnd(\G)$ (the boundary of the class of all finite graphs) is the set containing a loop and a directed edge.
\vspace{-12pt}
\begin{center}
	\begin{tikzpicture}
		\draw (0,0) node[anchor = east] {$\bnd(\G) = \Bigl\{$};
		\draw[very thick, gray, ->] (0,0.1) .. controls (0,1) and (1,0) .. (0.1,0);
		\draw[very thick, gray, ->] (1.1,0) -- (1.9,0);
		\filldraw (0,0) circle (0.075);
		\draw (0.65,-0.15) node {$,$};
		\filldraw (1,0) circle (0.075);
		\filldraw (2,0) circle (0.075);
		\draw (2,0) node[anchor = west] {$\Bigr\}$};
	\end{tikzpicture}
\end{center}

Let $\A$ be a set of finite $L$-structures.  Define $\forb(\A)$ to be the class of all finite $L$-structures $B$ such that, for all $A \in \A$, there is no embedding of $A$ into $B$.  Here ``$\forb$'' stands for ``forbidden.''  Clearly $\forb(\A)$ has the hereditary property.  Furthermore, if we assume that, for all $A, B \in \A$, there exists no embedding $f : A \rightarrow B$, then, up to isomorphism,
\[
\bnd(\forb(\A)) = \A.
\]
%% NEW VERSION %%
On the other hand, for any class $\K$ with the hereditary property,
\[
\forb(\bnd(\K)) = \K.
\]

The next lemma follows from chasing definitions.

\begin{lemma}\label{Lemma_ForbiddenUnions}
	If $\{ \mathcal{A}_i : i \in I \}$ is a set of sets of finite $L$-structures, then
	\[
	\bigcap_{i \in I} \forb\left(\mathcal{A}_i\right) = \forb\left( \bigcup_{i \in I} \mathcal{A}_i \right).
	\]
\end{lemma}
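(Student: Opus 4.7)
The plan is to prove this by a direct unfolding of the definitions of $\forb$ and of set-theoretic intersection and union. Both sides of the claimed equality are classes of finite $L$-structures, so it suffices to fix an arbitrary finite $L$-structure $B$ and verify that the membership conditions on the two sides are logically equivalent.

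First I would rewrite the left-hand side: by definition of intersection, $B \in \bigcap_{i \in I} \forb(\mathcal{A}_i)$ if and only if for every $i \in I$, $B \in \forb(\mathcal{A}_i)$; and by the definition of $\forb$ given just before the lemma, this is equivalent to saying that for every $i \in I$ and every $A \in \mathcal{A}_i$, there is no embedding $f : A \rightarrow B$. Then I would rewrite the right-hand side: $B \in \forb(\bigcup_{i \in I} \mathcal{A}_i)$ if and only if for every $A \in \bigcup_{i \in I} \mathcal{A}_i$, there is no embedding $f : A \rightarrow B$, which by the definition of union unfolds to the same condition: for every $i \in I$ and every $A \in \mathcal{A}_i$, there is no embedding $f : A \rightarrow B$.

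Since the two conditions on $B$ agree, the two classes are equal, as claimed. There is no real obstacle here; the only thing to be careful about is handling the index set $I$ in a single universally quantified bundle so that the equivalence between ``$\forall i\, \forall A \in \mathcal{A}_i$'' and ``$\forall A \in \bigcup_i \mathcal{A}_i$'' is explicit. This is exactly the ``chasing definitions'' the authors allude to, and a two- or three-line proof should suffice.
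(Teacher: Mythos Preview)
Your proposal is correct and is precisely the definition-chasing the paper alludes to; the paper itself gives no further proof beyond the remark that the lemma ``follows from chasing definitions.'' Your explicit unpacking of the two sides into the common condition ``for every $i\in I$ and every $A\in\mathcal{A}_i$ there is no embedding $A\to B$'' is exactly what is intended.
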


If $\A$ a set of finite graphs, we use $\forbG(\A)$ to denote $\forb(\A \cup \bnd(\G))$.  That is, $\forbG(\A)$ is the class of all graphs forbidding graphs from $\A$ as induced subgraphs.

%% Note that this paragraph has been rewritten %%
For $n \ge 3$, $\forbG(K_n)$ has the amalgamation property, and the generic limit of $\forbG(K_n)$ is usually called the $n$th \emph{Henson graph}; see \cite{Hen71}.  This generic limit is indivisible by \cite{EZS89}, which implies that $\forbG(K_n)$ is indivisible.  This was first shown directly in Theorem 2 of \cite{Folk70}.  More generally, Theorem 1 of \cite{NR} says that, if $\A$ is a finite set of $2$-connected graphs, then $\forbG(\A)$ is indivisible.  (Note that most such classes do not have the amalgamation property; see Corollary \ref{Corollary_AmalgamationProperty} below.)  For more examples, see \cite{RS, RSZ}.  This paper examines indivisibility for other classes of graphs, even those without the amalgamation property (hence without generic limits).

Let $L$ be a finite relational language and, for each $R \in L$, associate to it some positive real number $\alpha_R$.  Then, we can use this sequence $\overline{\alpha}$ to create a ``sparseness'' measure on $L$-structures as follows: For any $L$-structure $A$, let
\[
\delta_{\overline{\alpha}}(A) = |A| - \sum_{R \in L} \frac{\alpha_R}{\arity(R)!} \left|A^R\right|.
\]
(Here $\arity(R)$ denotes the arity of the relation symbol $R$, which for the edge relation $E$ on graphs is $2$.)  In particular, for graphs, we choose a single positive real number $\alpha$ (corresponding to $\alpha_E$) and we get, for any finite graph $G$,
\begin{equation}\label{Equation_DeltaAlpha}
	\delta_{\alpha}(G) = |G| - \alpha e(G),
\end{equation}
where $e(G)$ counts the number of edges in $G$; i.e., 
\[
e(G) = \frac{1}{2} \left| E^G \right|.
\]
In other words, $\delta_\alpha(G)$ is the difference between the number of vertices and $\alpha$ times the number of edges.  Note that, if $G$ is the disjoint union of graphs $A$ and $B$, then
\[
\delta_\alpha(G) = \delta_\alpha(A) + \delta_\alpha(B).
\]
With this, we can define two types of classes of sparse graphs.

\begin{definition}[$\K_\alpha$ and $\K_\alpha^+$]\label{Definition_Kalpha}
	Let $\alpha$ be a positive real number.  Define
	\begin{align*}
		\K_\alpha = & \ \left\{ A \in \G : \text{for all } B \subseteq A, \ \delta_\alpha(B) \ge 0 \right\} \\
		\K_\alpha^+ = & \ \left\{ A \in \G : \text{for all non-empty } B \subseteq A, \ \delta_\alpha(B) > 0 \right\}.
	\end{align*}
\end{definition}

\begin{remark}\label{Remark_Kalpha}
	For any positive real number $\alpha$ and $G \in \G$, $G \in \K_\alpha$ if and only if, for all subgraphs $H$ of $G$ with at least one edge,
	\[
	\frac{|H|}{e(H)} \geq \alpha.
	\]
	In other words, the ratio of vertices to edges is a bounded below by $\alpha$.  Therefore, $\K_\alpha$ is the class of all graphs that are hereditarily $\alpha$-sparse.  Note that we do not need to restrict to induced subgraphs here, as removing edges would only increase $|H|/e(H)$.
	
	This is also related to the notion of the maximum average degree of a graph.  By the Degree-Sum Formula, the average degree of a graph $G$ is $2e(G)/|G|$.  Therefore, we define the \emph{maximum average degree} of a graph $G$ to be the maximum of $2e(H)/|H|$ over all subgraphs $H$ of $G$.  Clearly, a graph $G$ is in $\K_\alpha$ if and only if it has maximum average degree at most $\frac{2}{\alpha}$.
\end{remark}

For any $L$-structure $A$, we can define the \emph{complement} of $A$, denoted $\overline{A}$, to be the $L$-structure with universe $A$ and, for all relation symbols $R \in L$, if $n = \arity(R)$, then $R^{\overline{A}}$ is the set of all $(a_1, \dots, a_n) \in A^n$ such that
\begin{itemize}
	\item $(a_1, \dots, a_n) \in R^A$ and there exist distinct $i, j \in [n]$ such that $a_i = a_j$; or
	\item $(a_1, \dots, a_n) \notin R^A$ and, for all distinct $i, j \in [n]$, $a_i \neq a_j$.
\end{itemize}
That is, we keep the same relation when there are repeated entries and swap the relation when each entry is distinct (including unary relation symbols).  Clearly $\overline{\overline{A}} = A$.  When applied to graphs, this is the usual graph complement and, when applied to strict linear orders, this is the reverse order.

The next lemma follows from definition-chasing.

\begin{lemma}\label{Lemma_EmbeddingComplement}
	Let $L$ be a finite relational language, let $A$ and $B$ be $L$-structures, and let $f : A \rightarrow B$ be an embedding.  Then, $f$ is an embedding of $\overline{A}$ into $\overline{B}$.
\end{lemma}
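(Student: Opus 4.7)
The plan is a straightforward case analysis driven by the observation that, because $f$ is injective, a tuple $(a_1,\dots,a_n) \in A^n$ has a repeated entry if and only if its image $(f(a_1),\dots,f(a_n)) \in B^n$ has a repeated entry. This is precisely the dichotomy used in the definition of the complement, so the two cases in that definition are preserved under applying $f$.

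First I would fix an arbitrary relation symbol $R \in L$ of arity $n$ and an arbitrary tuple $(a_1,\dots,a_n) \in A^n$, and note that $f$ remains injective as a function from the universe of $\overline{A}$ to the universe of $\overline{B}$ (the universes are unchanged). Then I would split into the two cases from the definition of complement.

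In the first case, assume there exist distinct $i,j \in [n]$ with $a_i = a_j$. Then $f(a_i) = f(a_j)$, so the image tuple also has a repeated entry. By the first clause in the definition of the complement, $(a_1,\dots,a_n) \in R^{\overline{A}}$ iff $(a_1,\dots,a_n) \in R^A$, and similarly for the image tuple in $\overline{B}$. Since $f$ is an embedding, $(a_1,\dots,a_n) \in R^A$ iff $(f(a_1),\dots,f(a_n)) \in R^B$, and chaining these equivalences gives $(a_1,\dots,a_n) \in R^{\overline{A}}$ iff $(f(a_1),\dots,f(a_n)) \in R^{\overline{B}}$. In the second case, all $a_i$ are distinct, so by injectivity all $f(a_i)$ are distinct. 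By the second clause in the definition, $(a_1,\dots,a_n) \in R^{\overline{A}}$ iff $(a_1,\dots,a_n) \notin R^A$, iff $(f(a_1),\dots,f(a_n)) \notin R^B$, iff $(f(a_1),\dots,f(a_n)) \in R^{\overline{B}}$.

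There is no real obstacle here; as the paper remarks, this follows by chasing definitions, and the only mild subtlety worth pointing out explicitly is that injectivity of $f$ is what allows the ``repeated entry'' condition to be transported between $A^n$ and $B^n$.
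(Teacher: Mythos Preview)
Your proof is correct and is exactly the definition-chase the paper has in mind; the paper does not even spell out a proof, merely noting that the lemma ``follows from definition-chasing.'' The one point you single out---that injectivity of $f$ is what transports the ``repeated entry'' dichotomy between $A^n$ and $B^n$---is indeed the only thing to check.
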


If $L$ is a finite relational language and $\K$ is a class of finite $L$-structures, we can define the complement class of $\K$ as follows:
\[
\overline{\K} = \left\{ \overline{A} : A \in \K \right\}.
\]
The following proposition follows immediately from the previous lemma.

\begin{proposition}\label{Proposition_PreserveComplement}
	Let $L$ be a finite relational language and let $\K$ be a class of finite $L$-structures.
	\begin{enumerate}
		\item $\overline{\K}$ has the hereditary property if and only if $\K$ has the hereditary property.
		\item $\overline{\K}$ has the amalgamation property if and only if $\K$ has the amalgamation property.
		\item $\overline{\K}$ is indivisible if and only if $\K$ is indivisible.
		\item for any set $\A$ of finite $L$-structures,
		\[
		\overline{\forb(\A)} = \forb(\overline{\A}).
		\]
	\end{enumerate}
\end{proposition}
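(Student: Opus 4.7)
The plan is to exploit two ingredients already on the table: complementation is an involution ($\overline{\overline{A}} = A$), and by Lemma \ref{Lemma_EmbeddingComplement} any embedding $f : A \to B$ is simultaneously an embedding $\overline{A} \to \overline{B}$. Combining these yields a natural bijection between embeddings $A \to B$ and embeddings $\overline{A} \to \overline{B}$, so complementation is an autoequivalence of the ``category'' of finite $L$-structures with embeddings. Because $\overline{\overline{\K}} = \K$, each of the biconditionals in (1)--(3) only requires one direction to be verified; the reverse follows by replacing $\K$ with $\overline{\K}$.

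For (1), given $A \in \overline{\K}$ and $B \subseteq A$, I would note that the inclusion $B \hookrightarrow A$ complements to an embedding $\overline{B} \to \overline{A} \in \K$, so the hereditary property of $\K$ together with its closure under isomorphism gives $\overline{B} \in \K$, hence $B = \overline{\overline{B}} \in \overline{\K}$. For (2), I would take an amalgamation diagram in $\overline{\K}$, apply Lemma \ref{Lemma_EmbeddingComplement} to transport it to a diagram in $\K$, amalgamate there to obtain $C' \in \K$, and then set $C = \overline{C'} \in \overline{\K}$; the same maps witness amalgamation in $\overline{\K}$ by another application of the lemma. For (3), given $A \in \overline{\K}$ and $k$, I would use indivisibility of $\K$ to produce $B' \in \K$ witnessing indivisibility for $\overline{A}$ at $k$ colors, and then set $B = \overline{B'}$; since $B$ and $B'$ share the same underlying set, any $k$-coloring $c$ of $B$ is also a $k$-coloring of $B'$, and the monochromatic embedding $\overline{A} \to B'$ that it produces complements to a monochromatic embedding $A \to B$.

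For (4), I would chain the biconditional ``$A$ embeds in $B$ iff $\overline{A}$ embeds in $\overline{B}$'' (immediate from Lemma \ref{Lemma_EmbeddingComplement} plus the involution identity) directly through the definition of $\forb$. I do not anticipate any real obstacle; the entire argument is bookkeeping. The one spot that deserves care is (1): the hereditary property is phrased for substructures, but the lemma produces an embedding $\overline{B} \to \overline{A}$ that is typically not an inclusion, so closure of $\K$ under isomorphism (assumed at the start of Section \ref{Section_Preliminatries}) must be invoked to land back in $\K$.
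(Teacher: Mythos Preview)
Your proposal is correct and matches the paper's approach exactly: the paper simply states that the proposition ``follows immediately from the previous lemma'' (Lemma~\ref{Lemma_EmbeddingComplement}), and you have spelled out precisely those routine verifications. One small remark: your caution in (1) is unnecessary, since if $B \subseteq A$ is a substructure then $\overline{B}$ is literally the substructure of $\overline{A}$ on the same underlying set (the complementation commutes with restriction), so no appeal to isomorphism closure is needed---but your argument is of course still valid as written.
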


In particular, note that, if $\A$ is a set of finite graphs, then
\begin{equation}\label{Equation_ForbComplement}
	\overline{\forbG(\A)} = \forbG(\overline{\A}),
\end{equation}
since $\overline{\bnd(\G)} = \bnd(\G)$.

Let $L$ be a finite relational language and let $A$ be an $L$-structure.  We say that $A$ is \emph{irreflexive} if, for all relation symbols $R \in L$ (say of arity $n$), for all $a_1, \dots, a_n \in A$, if $(a_1, \dots, a_n) \in R^A$, then $a_i \neq a_j$ for all distinct $i, j \in [n]$.

\begin{definition}[Lexicographic product]\label{Definition_LexProduct}
	Let $L$ be a finite relational langauge where each relation symbol is at least binary and let $A$ and $B$ be irreflexive $L$-structures.  The \emph{lexicographic product} of $A$ and $B$, denoted $A[B]$, is the $L$-structure with universe $A \times B$ and, for all relation symbols $R \in L$ say of arity $n$, define $R^{A[B]}$ by the set of all $((a_1, b_1), \dots, (a_n, b_n))$ such that $a_i \in A$ and $b_i \in B$ for all $i \in [n]$ and either
	\begin{enumerate}
		\item $(a_1, \dots, a_n) \in R^A$; or
		\item $a_1 = \dots = a_n$ and $(b_1, \dots, b_n) \in R^B$.
	\end{enumerate}
\end{definition}

This generalizes the notion of the lexicographic product of graphs and linear orders.  This is Definition 1.11 of \cite{Meir} and it is related to the lexicographic product discussed in \cite{GPS}, but missing the equivalence relation that identifies the first-coordinate, what N. Meir calls ``$s$.''

The following proposition follows from definition-chasing.

\begin{proposition}\label{Proposition_LexComplement}
	Let $L$ be a finite relational language where each relation symbol is at least binary and let $A$ and $B$ be two irreflexive $L$-structures.  Then,
	\[
	\overline{A[B]} = \overline{A} \left[ \overline{B} \right].
	\]
\end{proposition}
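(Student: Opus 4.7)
The plan is to verify the equality of the two $L$-structures on the common universe $A \times B$ by checking, for each relation symbol $R \in L$ of arity $n$, that $R^{\overline{A[B]}}$ and $R^{\overline{A}[\overline{B}]}$ contain the same $n$-tuples. The core of the argument is a direct unwinding of the two complement definitions combined with the two lexicographic product definitions.

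First I would record the simplification granted by irreflexivity: since $R^A$ contains no tuple with a repeated coordinate, the definition of $R^{\overline{A}}$ collapses to the condition that all coordinates are pairwise distinct and the tuple is not in $R^A$, and analogously for $R^{\overline{B}}$. In particular, $\overline{A}$ and $\overline{B}$ are themselves irreflexive, so the lexicographic product $\overline{A}[\overline{B}]$ appearing on the right-hand side is well-defined.

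Fix a tuple $\bar{t} = ((a_1, b_1), \ldots, (a_n, b_n))$. I would then case-split on the shape of the $A$-projection $(a_1, \ldots, a_n)$. When the $a_i$ are pairwise distinct, the pairs $(a_i, b_i)$ are also pairwise distinct, so $\bar{t} \in R^{\overline{A[B]}}$ reduces to $(a_1, \ldots, a_n) \notin R^A$, which matches the first disjunct in the definition of $R^{\overline{A}[\overline{B}]}$ after applying the irreflexive simplification. When $a_1 = \cdots = a_n$, the repetition pattern of $\bar{t}$ is governed entirely by that of the $b_i$, and a brief sub-analysis reduces both sides to the single condition that the $b_i$ are pairwise distinct and $(b_1, \ldots, b_n) \notin R^B$.

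The main obstacle, such as it is, is pure bookkeeping, since one has to keep track of the combinations of ``$A$-projection has a repeated entry'' versus ``tuple of pairs has a repeated entry.'' Irreflexivity of $A$ and $B$ is what makes the checking painless: any disjunct that would activate $R^A$, $R^B$, $R^{\overline{A}}$, or $R^{\overline{B}}$ on a tuple with a repeated entry is automatically vacuous, so entire branches of the case analysis collapse. Once all surviving cases are seen to agree, the equality of interpretations holds for every $R \in L$, which delivers $\overline{A[B]} = \overline{A}[\overline{B}]$ as $L$-structures.
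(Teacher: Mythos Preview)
The paper gives no detailed proof here, only the remark that the result ``follows from definition-chasing,'' so your approach is exactly what is intended, and your handling of the two cases ``all $a_i$ distinct'' and ``$a_1 = \cdots = a_n$'' is correct. However, your case split on the $A$-projection is not exhaustive when $n \geq 3$: you omit the mixed case in which some but not all of the $a_i$ coincide. You assert that irreflexivity makes such branches collapse, but this is precisely where the argument fails---and in fact the proposition as stated is \emph{false} for relations of arity at least~$3$. Take $R$ ternary and $A = B = \{0,1\}$ with $R^A = R^B = \emptyset$. Then $R^{A[B]} = \emptyset$, so $R^{\overline{A[B]}}$ contains every triple of pairwise distinct elements of $A \times B$, for instance $\bigl((0,0),(0,1),(1,0)\bigr)$. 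On the other hand $R^{\overline{A}} = R^{\overline{B}} = \emptyset$ (there are no three distinct elements in a two-element set), whence $R^{\overline{A}[\overline{B}]} = \emptyset$. The discrepancy lives exactly in the branch you dismissed: when the $A$-projection has a repeated entry but the tuple of pairs does not, the complement on the left places $\bar t$ in the relation (all entries distinct, $\bar t \notin R^{A[B]}$), while on the right both disjuncts of the lexicographic product over $\overline{A}$ are vacuous.

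For binary relation symbols your two cases \emph{are} exhaustive, and the argument you outline goes through cleanly; this is also the only setting in which the paper actually invokes the proposition (for graphs, in Lemma~\ref{Lemma_ForbidComplement}). So your proof establishes what the paper needs, but the general statement requires the additional hypothesis that every relation symbol in $L$ is exactly binary.
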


%%%%%%%%%%%%%%%%%%%%%%%%%%%%%%%%%%%%%%%%%
%% SECTION: Hereditarily Sparse Graphs %%
%%%%%%%%%%%%%%%%%%%%%%%%%%%%%%%%%%%%%%%%%

\section{Hereditarily sparse graphs}\label{Section_SparseGraphs}

%% BEGIN NEW VERSION %%
In this section, we examine the indivisibility of hereditarily sparse graphs.  The primary tool we will use is the notion of ``gluing'' multiple copies of a single graph at a point.  This is something that can always be done in any indivisible class of finite structures in a fixed relational language.

\begin{theorem}\label{Theorem_IndivisibleStrongAmalgam}
	Let $L$ be a relational language and let $\K$ be a class of finite $L$-structures.  If $\K$ is indivisible, then, for all non-empty $A \in \K$ and for all natural numbers $n \geq 2$, there exist $B \in \K$, $b_0 \in B$, and embeddings $f_1, \dots, f_n : A \rightarrow B$ such that, for all distinct $i, j \in [n]$, $f_i(A) \cap f_j(A) = \{ b_0 \}$.
\end{theorem}

\begin{proof}
	Fix non-empty $A \in \K$ and $n \geq 2$.  Since $\K$ is indivisible, there exists $B \in \K$ such that, for all $n$-colorings of $B$, $B$ has a monochromatic copy of $A$.  Choose an $n$-coloring $c$ of $B$ such that the number of monochromatic copies of $A$ in $B$ with respect to $c$ is minimal (there is at least one by assumption).  Take any monochromatic copy of $A$ in $B$ with respect to $c$; i.e., $f : A \rightarrow B$ is an embedding of $A$ into $B$ such that $c(f(a)) = c(f(a'))$ for all $a, a' \in A$.  Fix $a_0 \in A$; let $b_0 = f(a_0)$ and $i_0 = c(b_0)$.  For each $i \in [n]$, define an $n$-coloring $c_i : B \rightarrow [n]$ by setting
	\[
	c_i(b) = \begin{cases} c(b) & \text{ if } b \neq b_0, \\ i & \text{ if } b = b_0 \end{cases}.
	\]
	Note that $c_{i_0} = c$ and let $f_{i_0} = f$.  For any $i \neq i_0$, note that $f$ is not a monochromatic copy of $A$ in $B$ with respect to $c_i$.  Thus, by choice of $c$ as having the minimal number of monochromatic copies of $A$, there must exist a monochromatic copy of $A$ in $B$ with respect to $c_i$ which is \emph{not} a monochromatic copy of $A$ in $B$ with respect to $c$; call it $f_i$.  Since $c$ and $c_i$ differ only on $b_0$, this implies that $b_0 \in f_i(A)$.  Therefore, $c_i(f_i(A)) = \{ i \}$.  For all distinct $i, j \in [n]$, since $c_i$ and $c_j$ agree on all points outside of $b_0$, yet the color of $c_i(f_i(A))$ and $c_j(f_j(A))$ disagree, we conclude that
	\[
	f_i(A) \cap f_j(A) = \{ b_0 \}.
	\]
\end{proof}

The preceding theorem is similar to Theorem 4.2 (i) of \cite{N05}, where here we replace ``Ramsey'' with ``indivisible'' and ``amalgamation'' with ``strong amalgamation over a singleton structure.''  It is also similar to Theorem 1.2 of \cite{DHKZ}, where they show that one can strong amalgamate a ``forest'' of copies of a fixed graph.

In the remainder of this section, we restrict to classes of graphs and use Theorem \ref{Theorem_IndivisibleStrongAmalgam} to examine the indivisibility of $\K_\alpha$ and $\K^+_\alpha$ for positive real numbers $\alpha$.
% In this section, we examine $\K_\alpha$ and $\K^+_\alpha$ for positive real numbers $\alpha$.
%% END NEW VERSION %%
First, we list some connections between various classes of hereditarily sparse graphs.

\begin{lemma}\label{Lemma_CompareKalpha}
	For all $\alpha, \beta > 0$,
	\begin{enumerate}
		\item If $\alpha < \beta$, then $\K_\beta \subseteq \K^+_\alpha \subseteq \K_\alpha$.
		\item If $\alpha$ is irrational, then $\K_\alpha = \K^+_\alpha$.
		\item $\K^+_\alpha = \bigcup_{\gamma > \alpha} \K_\gamma$.
	\end{enumerate}
\end{lemma}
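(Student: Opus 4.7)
The plan is to prove the three parts in order, since the later parts reuse the arithmetic from the earlier ones. The key bookkeeping point is that, by \eqref{Equation_DeltaAlpha}, for any subgraph $B$, $\delta_\alpha(B) \ge 0$ is equivalent to $|B|/e(B) \ge \alpha$ when $e(B) > 0$ and is automatic when $e(B) = 0$, with equality precisely when $B$ is the empty graph.

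For part (1), suppose $G \in \K_\beta$ and let $B \subseteq G$ be non-empty. If $e(B) = 0$, then $\delta_\alpha(B) = |B| > 0$. Otherwise $\delta_\alpha(B) = |B| - \alpha e(B) \ge \beta e(B) - \alpha e(B) = (\beta - \alpha) e(B) > 0$, so $G \in \K^+_\alpha$. The second containment $\K^+_\alpha \subseteq \K_\alpha$ is immediate: strict positivity on non-empty subgraphs together with $\delta_\alpha(\emptyset) = 0$ yields non-negativity on all subgraphs. For part (2), only $\K_\alpha \subseteq \K^+_\alpha$ requires an argument. If $G \in \K_\alpha$ and some non-empty $B \subseteq G$ had $\delta_\alpha(B) = 0$, then $e(B) > 0$ (else $|B| = 0$, contradicting non-emptiness), forcing $\alpha = |B|/e(B) \in \Q$, contrary to the irrationality hypothesis.

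For part (3), the direction $\supseteq$ is immediate from part (1). The reverse direction is where the finiteness of the members of $\K^+_\alpha$ gets used. For $G \in \K^+_\alpha$, the set
\[
    S_G = \left\{ \tfrac{|B|}{e(B)} : B \subseteq G \text{ and } e(B) > 0 \right\}
\]
is a finite set of positive rationals, each strictly greater than $\alpha$. Taking $\gamma = \min S_G$ when $S_G \neq \emptyset$, and any real $\gamma > \alpha$ otherwise, gives $\gamma > \alpha$ and $\delta_\gamma(B) \ge 0$ for every $B \subseteq G$, so $G \in \K_\gamma$. None of the steps presents a real obstacle; the only care needed is the bookkeeping among empty, edgeless, and edge-containing subgraphs when passing between strict and non-strict inequalities, and the use of finiteness of $G$ (rather than of $\K^+_\alpha$ as a class) to realize the infimum as a minimum in part (3).
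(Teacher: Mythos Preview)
Your argument is correct and follows exactly the same line as the paper's proof, which is extremely terse: it simply says (1) follows from definitions, refers (2) to a remark in \cite{BL}, and for (3) takes $\beta = \min\{|H|/e(H) : H \subseteq G,\ e(H) \ge 1\}$ and asserts $\beta > \alpha$ and $G \in \K_\beta$. You have filled in the routine details (including the edgeless case the paper leaves implicit), but the underlying idea is identical.
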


\begin{proof}
	(1): Follows from definitions.
	
	(2): This is the comment following Definition 2.2 of \cite{BL}.
	
	(3): Clearly (1) implies $\bigcup_{\gamma > \alpha} \K_\gamma \subseteq \K^+_\alpha$.  For the converse, fix $G \in \K^+_\alpha$ and let
	\[
	\beta = \min \left\{ |H|/e(H) : H \subseteq G, e(H) \geq 1 \right\}.
	\]
	One can easily verify that $\beta > \alpha$ and $G \in \K_\beta$.
\end{proof}

When studying hereditarily $\alpha$-sparse graphs, most papers are only concerned with $\alpha \in (0,1)$ (see, for example, \cite{BL}).  However, for completeness, we will consider all positive real numbers $\alpha$.  The classes $\K_\alpha$ and $\K^+_\alpha$ become somewhat uninteresting for $\alpha \ge 1$.

Let $\F$ be the class of all finite forests.  That is,
\[
\F = \forbG \left( \left\{ C_n : n \geq 3 \right\} \right).
\]
The next proposition follows from definitions.

\begin{proposition}\label{Proposition_classifyKlarge}
	For $\alpha \geq 1$, we classify $\K_\alpha$ and $\K^+_\alpha$ as follows:
	\begin{enumerate}
		\item $\K_1$ is the class of all finite graphs where each connected component has at most one cycle;
		\item $\K^+_1 = \F$;
		\item for $\alpha > 1$, $\K_\alpha$ is the class of all $G \in \F$ where each connected component of $G$ has at most $\left\lfloor\frac{1}{\alpha - 1} + 1 \right\rfloor$ vertices; and
		\item for $\alpha > 1$, $\K^+_\alpha$ is the class of all $G \in \F$ where each connected component of $G$ has at most $\left\lceil\frac{1}{\alpha - 1} \right\rceil$ vertices.
	\end{enumerate}
\end{proposition}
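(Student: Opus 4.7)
My plan is to handle all four parts by a direct componentwise analysis of $\delta_\alpha(G) = |G| - \alpha e(G)$. For a connected graph $H$ with $n$ vertices and $m$ edges, $\delta_\alpha(H) = n - \alpha m$; when $H$ is a tree, $m = n-1$ and so $\delta_\alpha(H) = n(1-\alpha) + \alpha$. Recall also that a connected graph has at most one cycle precisely when $m \leq n$, since $m - n + 1$ is its cyclomatic number.

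For (1), I would decompose $\delta_1(G) = \sum_i (n_i - m_i)$ over the components $G_i$ of $G$. Each component is itself an induced subgraph, so $G \in \K_1$ forces $n_i \geq m_i$ for each $i$, i.e., each component has at most one cycle. Conversely, if every component of $G$ has at most one cycle then so does every component of every induced subgraph (vertex deletion cannot create cycles), and summing gives $\delta_1 \geq 0$ on all induced subgraphs. For (2), any graph with a cycle contains an induced cycle $C_n$, and $\delta_1(C_n) = 0$ rules out membership in $\K_1^+$; conversely, every non-empty induced subgraph of a forest is a non-empty forest whose $\delta_1$ equals its number of components, which is positive.

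For (3) and (4), Lemma \ref{Lemma_CompareKalpha}(1) combined with (2) shows $\K_\alpha \subseteq \K_\alpha^+ \subseteq \F$ when $\alpha > 1$, so any $G \in \K_\alpha$ is a forest and every induced subgraph $B$ satisfies $\delta_\alpha(B) = \sum_j (p_j(1-\alpha) + \alpha)$ summed over the components of $B$ of sizes $p_j$. Since $\alpha > 1$, each summand is $\geq 0$ iff $p_j \leq 1 + \frac{1}{\alpha-1}$ and $> 0$ iff $p_j < 1 + \frac{1}{\alpha-1}$. The maximum such $p_j$ over all induced subgraphs of $G$ is realized by taking $B$ to be a full component of $G$, so each condition reduces to a bound on the component sizes of $G$ itself. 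This bound is $\left\lfloor 1 + \frac{1}{\alpha-1}\right\rfloor$ for (3); for (4), a case split on whether $\frac{1}{\alpha-1}$ is an integer shows that the strict bound equals $\left\lceil \frac{1}{\alpha-1}\right\rceil$. The only real subtlety is this last reconciliation of floor and ceiling notation, which is precisely where (3) and (4) diverge, namely when $\frac{1}{\alpha-1}$ is an integer.
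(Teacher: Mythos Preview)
Your argument is correct and matches the paper's approach, which simply asserts that the proposition ``follows from definitions'' and gives no further detail; you have supplied exactly the componentwise computation of $\delta_\alpha$ that this entails.

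One slip to fix: you write $\K_\alpha \subseteq \K_\alpha^+ \subseteq \F$, but the first inclusion is backwards (for instance $C_3 \in \K_1 \setminus \K_1^+$). What Lemma~\ref{Lemma_CompareKalpha}(1) actually gives, applied with $1 < \alpha$, is $\K_\alpha \subseteq \K_1^+ = \F$, and then $\K_\alpha^+ \subseteq \K_\alpha \subseteq \F$ follows either directly from the definitions or from Lemma~\ref{Lemma_CompareKalpha}(3). Either way both classes lie in $\F$, and the remainder of your argument goes through unchanged.
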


In particular, $\K^+_\alpha \subseteq \F$ for all $\alpha \geq 1$ and $\K_\alpha \subseteq \F$ for all $\alpha > 1$.  Moreover, note that $\K^+_\alpha$ is the class of all finite null graphs for $\alpha \geq 2$ and $\K_\alpha$ is the class of all finite null graphs for $\alpha > 2$.

For $\alpha \in (0,1)$, a classification of $\K_\alpha$ or $\K^+_\alpha$ becomes more difficult.  However, there are easy examples of graphs that belong in these classes.

\begin{lemma}\label{Lemma_RegularGraphs}
	For all $\alpha > 0$, for all positive integers $k \leq \frac{2}{\alpha}$, for all $k$-regular graphs $G$, $G \in \K_\alpha$.
\end{lemma}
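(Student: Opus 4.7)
The plan is to reduce immediately to the characterization in Remark \ref{Remark_Kalpha}: it suffices to show that every subgraph $H$ of $G$ with $e(H)\geq 1$ satisfies $|H|/e(H) \geq \alpha$. Since $\alpha \le 2/k$ by hypothesis, this will follow at once if I can show $|H|/e(H) \geq 2/k$ for every such $H$, i.e., that the maximum average degree of $G$ is at most $k$.

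The key observation is that $k$-regularity controls degrees of subgraphs: if $H \subseteq G$ and $v \in H$, then $\deg_H(v) \leq \deg_G(v) = k$, since passing to a subgraph can only remove incident edges. Summing over vertices and applying the Degree-Sum Formula gives
\[
    2 e(H) \;=\; \sum_{v \in H} \deg_H(v) \;\leq\; k \, |H|,
\]
which rearranges to $|H|/e(H) \geq 2/k \geq \alpha$, as required.

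I would then invoke Remark \ref{Remark_Kalpha} to conclude $G \in \K_\alpha$. Note that the remark emphasizes that in checking membership of $\K_\alpha$, one does not need to restrict to induced subgraphs, which is exactly why the degree-sum argument above (which works for arbitrary subgraphs) suffices.

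There is no real obstacle here; the statement is essentially an immediate consequence of the Degree-Sum Formula together with the monotonicity of degrees under passage to subgraphs. The only point one should be careful about is the distinction between induced subgraphs and subgraphs, but Remark \ref{Remark_Kalpha} explicitly handles this.
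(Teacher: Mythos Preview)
Your proof is correct and follows essentially the same approach as the paper: both use monotonicity of degrees under passage to subgraphs together with the Degree-Sum Formula to bound $e(H) \leq k|H|/2$, and then use $k \leq 2/\alpha$ to conclude. The only cosmetic difference is that the paper phrases the conclusion as $\delta_\alpha(H) \geq 0$ directly, whereas you route through the equivalent ratio formulation of Remark~\ref{Remark_Kalpha}.
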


\begin{proof}
	Take $H \subseteq G$ and $a \in H$.  Clearly $\deg_H(a) \leq \deg_G(a) = k$.  By the Degree-Sum Formula, $e(H) \leq k|H|/2$.  Thus,
	\[
	\delta_\alpha(H) = |H| - \alpha e(H) \geq |H| - \frac{k \alpha |H|}{2} \geq |H| - |H| = 0.
	\]
\end{proof}

This lemma allows us to specify when a complete graph is contained in $\K_\alpha$.

\begin{corollary}\label{Corollary_CompleteGraphsKalpha}
	Fix $\alpha > 0$.  Then, for all positive integers $n$,
	\[
	K_n \in \K_\alpha \text{ if and only if } n \leq \frac{2}{\alpha} + 1
	\]
	Hence, $K_m$ for $m = \left\lfloor \frac{2}{\alpha} + 1 \right\rfloor$ is the largest complete graph in $\K_\alpha$.
\end{corollary}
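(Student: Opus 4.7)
The plan is to leverage Lemma \ref{Lemma_RegularGraphs} for one direction and a direct computation with $\delta_\alpha$ for the other. The key observation is that $K_n$ is $(n-1)$-regular, so regularity considerations match up exactly with the stated numerical bound.

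For the sufficient direction, suppose $n \leq \frac{2}{\alpha} + 1$, equivalently $n - 1 \leq \frac{2}{\alpha}$. Since $K_n$ is $(n-1)$-regular, I would apply Lemma \ref{Lemma_RegularGraphs} with $k = n - 1$ to immediately conclude that $K_n \in \K_\alpha$. This handles the forward implication without any further computation.

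For the necessary direction, I would use the fact that $K_n$ is itself an induced subgraph of $K_n$, so membership in $\K_\alpha$ forces $\delta_\alpha(K_n) \geq 0$. Since $e(K_n) = \binom{n}{2} = \frac{n(n-1)}{2}$, I would expand
\[
    \delta_\alpha(K_n) = n - \alpha \cdot \frac{n(n-1)}{2} \geq 0,
\]
and, after dividing by $n$ (for $n \geq 1$) and rearranging, deduce $n - 1 \leq \frac{2}{\alpha}$, i.e., $n \leq \frac{2}{\alpha} + 1$. The case $n = 1$ is trivially covered since $1 \leq \frac{2}{\alpha} + 1$ for every $\alpha > 0$.

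The final sentence then follows immediately: since the condition $n \leq \frac{2}{\alpha} + 1$ cuts off at integer $m = \lfloor \frac{2}{\alpha} + 1 \rfloor$, this $K_m$ is the largest complete graph contained in $\K_\alpha$. There is essentially no obstacle here; the only mild care needed is handling the trivial $n = 1$ case separately so that the rearrangement step is valid, but this is a one-line comment rather than a genuine difficulty.
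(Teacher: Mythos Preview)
Your proof is correct and follows essentially the same approach as the paper: invoke Lemma~\ref{Lemma_RegularGraphs} for one direction via $(n-1)$-regularity, and for the other direction compute $\delta_\alpha(K_n)$ directly using $e(K_n)=\binom{n}{2}$. The only cosmetic difference is that the paper argues the second direction by contrapositive (assuming $n>\tfrac{2}{\alpha}+1$ and deducing $\delta_\alpha(K_n)<0$), whereas you assume $K_n\in\K_\alpha$ and rearrange $\delta_\alpha(K_n)\ge 0$; these are the same computation.
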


\begin{proof}
	If $n \leq \frac{2}{\alpha} + 1$, then $K_n$ is $(n-1)$-regular and $n-1 \leq \frac{2}{\alpha}$.  Therefore, by Lemma \ref{Lemma_RegularGraphs}, $K_n \in \K_\alpha$.  On the other hand, if $n > \frac{2}{\alpha}+1$, then
	\[
	e(K_n) = \binom{n}{2} = (n-1)\frac{n}{2} > \frac{2}{\alpha} \cdot \frac{n}{2} = \frac{n}{\alpha} = \frac{|K_n|}{\alpha}.
	\]
	Thus, $\delta_\alpha(K_n) = |K_n| - \alpha e(K_n) < 0$.  Therefore, $K_n \notin \K_\alpha$.
\end{proof}

We can generalize this to ``windmill graphs,'' which are created by taking disjoint copies of a complete graph and gluing them together at a single vertex.  More formally, the $(m,n)$-windmill graph is defined as follows.

\begin{definition}\label{Definition_Windmill}
	The \emph{$(m,n)$-windmill graph}, denoted $\Wd(m,n)$, is the graph whose vertex set is
	\[
	\Wd(m,n) = \left\{ (i,j) : i \in [n], j \in [m-1] \right\} \cup \{ 1 \}
	\]
	and whose edge set is
	\begin{align*}
		E^{\Wd(m,n)} = & \{ ((i,j_0), (i,j_1)) : i \in [n], j_0, j_1 \in [m-1], j_0 \neq j_1 \} \\ & \cup \left\{ ((i,j), 1) : i \in [n], j \in [m-1] \right\} \\ & \cup \left\{ (1, (i,j)) : i \in [n], j \in [m-1] \right\}.
	\end{align*}
	For any $i \in [n]$, call $\{ (i,j) : j \in [m-1] \} \cup \{ 1 \}$ the \emph{$i$th petal} of $\Wd(m,n)$.  We call $1$ the \emph{center} of $\Wd(m,n)$.  Note that $\Wd(m,n)$ has $n(m-1)+1$ vertices and $n \binom{m}{2}$ edges.
\end{definition}

For example, $\Wd(4,3)$ is the following graph.
\begin{center}
	\begin{tikzpicture}
		\foreach \x in {0,120,240}{
			\draw[very thick, gray] (0,0) -- ({0.6*cos(\x+30)},{0.6*sin(\x+30)}) -- ({cos(\x)},{sin(\x)}) -- ({0.6*cos(\x-30)},{0.6*sin(\x-30)}) -- cycle;
			\draw[very thick, gray] (0,0) -- ({cos(\x)},{sin(\x)});
			\draw[very thick, gray] ({0.6*cos(\x+30)},{0.6*sin(\x+30)}) -- ({0.6*cos(\x-30)},{0.6*sin(\x-30)});
			\filldraw ({0.6*cos(\x+30)},{0.6*sin(\x+30)}) circle (0.075);
			\filldraw ({cos(\x)},{sin(\x)}) circle (0.075);
			\filldraw ({0.6*cos(\x-30)},{0.6*sin(\x-30)}) circle (0.075);
		}
		\filldraw (0,0) circle (0.075);
	\end{tikzpicture}
\end{center}

Which of these graphs belong to $\K_\alpha$?  This is answered by the following lemma.

\begin{lemma}\label{Lemma_WindmillGraphs}
	Fix $0 < \alpha \leq 2$ and let $m = \left\lfloor \frac{2}{\alpha} + 1 \right\rfloor$.  Then, for all positive integers $n$,
	\[
	\Wd(m,n) \in \K_\alpha \text{ if and only if } n \leq \frac{2}{(\alpha m - 2)(m - 1)}.
	\]
\end{lemma}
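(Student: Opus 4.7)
The plan is to compute $\delta_\alpha$ explicitly on $\Wd(m,n)$, then on an arbitrary induced subgraph, and exploit that the ``contribution per petal'' is a convex quadratic.

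First, note that $m > 2/\alpha$ (by the floor definition of $m$), so $\alpha m - 2 > 0$, which avoids dividing by zero in the stated bound. A direct count gives $|\Wd(m,n)| = n(m-1) + 1$ and $e(\Wd(m,n)) = n \binom{m}{2}$, so
\[
  \delta_\alpha(\Wd(m,n)) = 1 - \frac{n(m-1)(\alpha m - 2)}{2}.
\]
The forward direction is immediate from this: if $\Wd(m,n) \in \K_\alpha$, then in particular $\delta_\alpha(\Wd(m,n)) \geq 0$, which rearranges to $n \leq \frac{2}{(\alpha m - 2)(m-1)}$.

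For the converse, assume the inequality and let $H \subseteq \Wd(m,n)$ be an arbitrary induced subgraph; split into two cases depending on whether $H$ contains the center. If $H$ omits the center, then $H$ is the disjoint union of induced subgraphs of the petal cliques $K_{m-1} \subseteq K_m$. Since $K_m \in \K_\alpha$ by Corollary \ref{Corollary_CompleteGraphsKalpha}, each piece has non-negative $\delta_\alpha$, and additivity of $\delta_\alpha$ over disjoint unions then gives $\delta_\alpha(H) \geq 0$.

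If $H$ contains the center, let $k_i$ be the number of non-center vertices of $H$ lying in petal $i$, so $0 \leq k_i \leq m-1$. Then $|H| = 1 + \sum_i k_i$ and, since the intersection of $H$ with petal $i$ together with the center is a clique of size $k_i + 1$, we get $e(H) = \sum_i \binom{k_i+1}{2}$. Setting $f(k) = k(\alpha(k+1)-2)/2$, a short computation yields
\[
  \delta_\alpha(H) = 1 - \sum_{i=1}^{n} f(k_i).
\]
The crux is that $f$ is a convex quadratic (leading coefficient $\alpha/2 > 0$) with $f(0) = 0$ and $f(m-1) = (m-1)(\alpha m - 2)/2 \geq 0$. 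Convexity forces the maximum of $f$ on $[0, m-1]$ to occur at an endpoint, and since $f(m-1) \geq f(0) = 0$ the maximum is $f(m-1)$. Hence $\sum_i f(k_i) \leq n f(m-1)$ and
\[
  \delta_\alpha(H) \geq 1 - \frac{n(m-1)(\alpha m - 2)}{2} \geq 0
\]
by the hypothesis on $n$. The only substantive step is this convexity bound on the $f(k_i)$; the rest is bookkeeping together with the previous corollary.
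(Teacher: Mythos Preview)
Your proof is correct and follows essentially the same strategy as the paper: one direction is the direct computation of $\delta_\alpha(\Wd(m,n))$, and the other direction splits on whether the center lies in $H$ and bounds the contribution of each petal. The only cosmetic difference is in the petal bound: the paper writes the explicit inequality $\alpha\binom{m_i}{2} \le \frac{\alpha m}{2}(m_i-1)$ and then feeds in the hypothesis, whereas you package the same estimate as ``a convex quadratic on $[0,m-1]$ attains its maximum at an endpoint''; both yield $\sum_i f(k_i) \le n f(m-1)$ and finish identically.
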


\begin{proof}
	From Corollary \ref{Corollary_CompleteGraphsKalpha}, we know that $K_m$ is the largest complete graph in $\K_\alpha$.  Note that, since $m > \frac{2}{\alpha}$, $\alpha m - 2 > 0$.  Moreover, since $\alpha \leq 2$, $m > 1$; thus, $m - 1 > 0$.  Since $m \leq \frac{2}{\alpha} + 1$,
	\[
	(\alpha m - 2)(m - 1) \leq (2 + \alpha - 2)\frac{2}{\alpha} = 2.
	\]
	Therefore,
	\[
	\frac{2}{(\alpha m - 2)(m - 1)} \geq 1.
	\]
	
	First, assume that $n \leq \frac{2}{(\alpha m - 2)(m - 1)}$.  This clearly implies
	\begin{align*}
		\delta_\alpha(\Wd(m,n)) & = n(m-1)+1 - \frac{ \alpha n m(m-1)}{2} \\ & = 1 + \frac{n(m-1)(2 - \alpha m)}{2} \geq 0.
	\end{align*}
	From this inequality, we obtain
	\[
	\frac{\alpha m}{2} \leq 1 + \frac{1}{n(m-1)}.
	\]
	Let $H \subseteq \Wd(m,n)$.  If the center of $\Wd(m,n)$ is not in $H$, then $H$ is a disjoint union of graphs of the form $K_j$ for $j < m$.  Hence, by Corollary \ref{Corollary_CompleteGraphsKalpha}, $\delta_\alpha(H) \geq 0$.  Therefore, we may assume that the center is in $H$.  For each $i \in [n]$, let $m_i$ denote the size of the intersection of the $i$th petal of $\Wd(m,n)$ with $H$, which is isomorphic to $K_{m_i}$.  As the center is in $H$, we have $1 \leq m_i \leq m$.  Moreover,
	\[
	\alpha \binom{m_i}{2} \leq \frac{\alpha m}{2} (m_i-1) \leq m_i-1 + \frac{m_i-1}{n(m-1)}.
	\]
	Thus,
	\[
	\delta_\alpha(H) = \sum_{i=1}^n \left(m_i - 1 - \alpha \binom{m_i}{2}\right) + 1 \geq -\frac{1}{n} \sum_{i=1}^n \frac{m_i-1}{m-1} + 1 \geq -\frac{n}{n} + 1 = 0.
	\]
	In either case, we get $\delta_\alpha(H) \geq 0$, so $\Wd(m,n) \in \K_\alpha$.
	
	Next, assume that $n > \frac{2}{(\alpha m - 2)(m - 1)}$.  Thus, $n(2 - \alpha m)(m-1) < -2$.
	Therefore,
	\begin{align*}
		\delta_\alpha(\Wd(m,n)) = & \ n(m-1)+1 - \frac{\alpha nm(m-1)}{2} = \\ & \ \frac{n(2-\alpha m)(m-1)}{2} + 1 < -1 + 1 = 0.
	\end{align*}
	Thus, $\Wd(m,n) \notin \K_\alpha$.
\end{proof}

%% Removed this because it seems redundant with the proof of the following theorem. %%
%Let $m = \left\lfloor \frac{2}{\alpha} + 1 \right\rfloor$ and $n = \left\lfloor \frac{2}{(\alpha m - 2)(m - 1)} \right\rfloor$.  Then, $\Wd(m,n)$ is well-defined and $n$ is the largest integer such that $\Wd(m,n) \in \K_\alpha$.

We use these graphs to establish the following theorem.

\begin{theorem}\label{Theorem_Sparseindivisible}
	For all $\alpha > 0$, $\K_\alpha$ is indivisible if and only if $\alpha > 2$.
\end{theorem}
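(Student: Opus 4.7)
The plan is to handle the two directions separately, relying on Proposition \ref{Proposition_classifyKlarge} for the forward direction and a degeneracy / chromatic number bound for the backward direction.

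For $(\Leftarrow)$, assume $\alpha > 2$. By Proposition \ref{Proposition_classifyKlarge}, $\K_\alpha$ is exactly the class of finite null graphs $\{N_n : n \in \N\}$. Indivisibility is then immediate from pigeonhole: given $A = N_n$ and $k$ colors, the graph $B = N_{k(n-1)+1}$ lies in $\K_\alpha$, and any $k$-coloring must have a color class of size at least $n$, which yields a monochromatic copy of $N_n$.

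For $(\Rightarrow)$, assume $\alpha \leq 2$. I would take $A = K_2$; since $\delta_\alpha(K_2) = 2 - \alpha \geq 0$, we have $K_2 \in \K_\alpha$. Set $k := \lfloor 2/\alpha \rfloor + 1$. I claim that for every $B \in \K_\alpha$ there is a $k$-coloring of $B$ with no monochromatic copy of $K_2$, namely any proper $k$-coloring. The argument rests on Remark \ref{Remark_Kalpha}: every $G \in \K_\alpha$ has maximum average degree at most $2/\alpha$, so every nonempty subgraph $H \subseteq G$ contains a vertex of degree at most $\lfloor 2/\alpha \rfloor$ (the minimum degree of $H$ is bounded above by the average degree, and degrees are integers). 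Hence $G$ is $\lfloor 2/\alpha \rfloor$-degenerate, and the standard greedy argument along a degeneracy ordering produces a proper $k$-coloring. Such a coloring has no monochromatic edge, hence no monochromatic $K_2$, so $\K_\alpha$ fails to be indivisible.

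I do not expect a serious obstacle here; the one thing to verify with care is the integer rounding in the degeneracy bound, in particular at the boundary value $\alpha = 2$, where $k = 2$ and $\K_\alpha$ consists of matchings together with isolated vertices, which are evidently properly $2$-colorable. An alternative for $(\Rightarrow)$ would be to leverage the windmill graphs from Lemma \ref{Lemma_WindmillGraphs} to make a more refined choice of $A$, but the degeneracy argument with $A = K_2$ is by far the cleanest and uniformly covers all $\alpha \in (0,2]$.
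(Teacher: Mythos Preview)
Your proof is correct, and the backward direction takes a genuinely different route from the paper's.

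For $(\Leftarrow)$ you match the paper exactly. For $(\Rightarrow)$, the paper chooses $A = K_m$ with $m = \lfloor 2/\alpha + 1\rfloor$ (the largest clique in $\K_\alpha$, via Corollary~\ref{Corollary_CompleteGraphsKalpha}), sets $n$ maximal with $\Wd(m,n)\in\K_\alpha$ (via Lemma~\ref{Lemma_WindmillGraphs}), and then argues by contradiction: given $B\in\K_\alpha$, pick an $(n{+}1)$-coloring minimizing the number of monochromatic copies of $K_m$; recoloring a single vertex of one such copy in each of the other $n$ colors forces $n{+}1$ copies of $K_m$ sharing that vertex, hence a $\Wd(m,n{+}1)$ inside $B$, contradicting $B\in\K_\alpha$. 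Your argument instead takes $A = K_2$ and observes that the maximum-average-degree bound of Remark~\ref{Remark_Kalpha} makes every $B\in\K_\alpha$ $\lfloor 2/\alpha\rfloor$-degenerate, hence properly $(\lfloor 2/\alpha\rfloor{+}1)$-colorable, killing all monochromatic edges.

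Your approach is shorter and more elementary: it sidesteps the windmill machinery entirely and needs only the standard degeneracy-to-chromatic-number bound. It also transfers with no extra work to the $\K_\alpha^+$ case of Theorem~\ref{Theorem_Sparseindivisible2} for $0<\alpha<2$ (since $\K_\alpha^+\subseteq\K_\alpha$ and $K_2\in\K_\alpha^+$ there), whereas the paper develops the pseudo-windmill Lemma~\ref{Lemma_PseudoWindmillSize} for that purpose. What the paper's argument buys is finer structural information: it identifies, for each $\alpha$, an explicit extremal obstruction ($\Wd(m,n{+}1)$) and a larger witness graph $A=K_m$, and the recoloring/minimality technique is reusable in settings where a simple degeneracy bound is unavailable.
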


\begin{proof}
	As noted above, for $\alpha > 2$, $\K_\alpha$ is the class of all finite null graphs.  By the Pigeonhole principle, this is indivisible.
	
	Suppose $0 < \alpha \leq 2$.  Let
	\[
	m = \left\lfloor \frac{2}{\alpha} + 1 \right\rfloor \text{ and } n = \left\lfloor \frac{2}{(\alpha m - 2)(m - 1)} \right\rfloor.
	\]
	By Corollary \ref{Corollary_CompleteGraphsKalpha}, $m$ is maximal such that $K_m \in \K_\alpha$ and, by Lemma \ref{Lemma_WindmillGraphs}, $n$ is maximal such that $\Wd(m,n) \in \K_\alpha$.
	If $\K_\alpha$ is indivisible, then, by Theorem \ref{Theorem_IndivisibleStrongAmalgam} applied to $K_m$ and $n+1$, there exist $B \in \K_\alpha$, $b_0 \in B$, and embeddings $f_1, \dots, f_{n+1} : K_m \rightarrow B$ such that, for all distinct $i, j \in [n+1]$, $f_i(K_m) \cap f_j(K_m) = \{ b_0 \}$.  Thus, $\Wd(m,n+1)$ is a (not necessarily induced) subgraph of $B$.  This implies that $\Wd(m,n+1) \in \K_\alpha$, a contradiction.  Therefore, $\K_\alpha$ is not indivisible.
\end{proof}

We can prove a similar result for $\K^+_\alpha$, but this requires looking at a class of graphs we are calling \emph{pseudo-windmill graphs}.  These are graphs consisting of disjoint copies of a fixed graph glued together at a single vertex (not necessarily the same vertex in each copy).  Formally, we define pseudo-windmill graph class of $G$ and $n$ as follows.

\begin{definition}\label{Definition_PseudoWindmill}
	Let $G$ be a finite connected graph and $n \geq 1$.  For each choice of (not necessarily distinct) $a_1, \dots, a_n \in G$, define a graph $H$ whose vertex set is
	\[
	H = \{ (i, b) : i \in [n], b \in G \setminus \{ a_i \} \} \cup \{ 1 \}
	\]
	and whose edge set is
	\begin{align*}
		E^H = & \ \{ ((i,b), (i,c)) : i \in [n], b, c \in G \setminus \{ a_i \}, (b, c) \in E^G \} \\ & \cup \{ ((i,b), 1) : i \in [n], b \in G \setminus \{ a_i \}, (b, a_i) \in E^G \} \\ & \cup \{ (1, (i,b)) : i \in [n], b \in G \setminus \{ a_i \}, (a_i, b) \in E^G \}.
	\end{align*}
	For $i \in [n]$, call $\{ (i,b) : i \in [n], b \in G \setminus \{ a_i \} \} \cup \{ 1 \}$ the \emph{$i$th petal} of $H$ and call $1$ the \emph{center} of $H$.  The \emph{pseudo-windmill graph class} $\PW(G,n)$ is the class of all such graphs $H$ ranging over all choices of $a_1, \dots, a_n \in G$.
\end{definition}

Note that $\PW(K_m,n)$ contains only graphs isomorphic to $\Wd(m,n)$, hence this is a generalization of windmill graphs.  Moreover, $\PW(G,1)$ contains only graphs isomorphic to $G$.  Below are examples of two graphs in $\PW(P_3,6)$.
\begin{center}
	\begin{tikzpicture}
		\foreach \x in {0,60,120,180,240,300}{
			\draw[very thick, gray] (0,0) -- ({cos(\x)}, {sin(\x)});
			\filldraw ({0.5*cos(\x)}, {0.5*sin(\x)}) circle (0.075);
			\filldraw ({cos(\x)}, {sin(\x)}) circle (0.075);
		}
		\filldraw (0,0) circle (0.075);
		\foreach \x in {0,60,180,300}{
			\draw[very thick, gray] (5,0) -- ({5+cos(\x)}, {sin(\x)});
			\filldraw ({5+0.5*cos(\x)}, {0.5*sin(\x)}) circle (0.075);
			\filldraw ({5+cos(\x)}, {sin(\x)}) circle (0.075);
		}
		\foreach \x in {120,240}{
			\draw[very thick, gray] ({5+0.75*cos(\x-15)}, {0.75*sin(\x-15)}) -- (5,0) -- ({5+0.75*cos(\x+15)}, {0.75*sin(\x+15)});
			\filldraw ({5+0.75*cos(\x-15)}, {0.75*sin(\x-15)}) circle (0.075);
			\filldraw ({5+0.75*cos(\x+15)}, {0.75*sin(\x+15)}) circle (0.075);
		}
		\filldraw (5,0) circle (0.075);
	\end{tikzpicture}
\end{center}

To prove the analogous result of Theorem \ref{Theorem_Sparseindivisible} for $\K^+_\alpha$, we concentrate on the graph $G$ obtained by removing a single edge from $K_m$.  We now prove the analogue of Lemma \ref{Lemma_WindmillGraphs}.

\begin{lemma}\label{Lemma_PseudoWindmillSize}
	Fix $m \geq 4$, let $G$ be the graph $K_m$ with one edge removed, and let $\alpha = \frac{2}{m-1}$.  For all positive integers $n$,
	\begin{enumerate}
		\item If $n < \frac{m-1}{m-3}$, then $\PW(G,n) \subseteq \K^+_\alpha$.
		\item If $n \geq \frac{m-1}{m-3}$, then $\PW(G,n) \cap \K^+_\alpha = \emptyset$.
	\end{enumerate}
	(Note that, for $m \geq 6$, the first case only happens when $n = 1$.)
\end{lemma}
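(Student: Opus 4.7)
The plan is to read the lemma off from a direct computation of $\delta_\alpha$, first on $G$, then on a generic $H \in \PW(G,n)$, and finally on arbitrary nonempty induced subgraphs $B \subseteq H$. Since $|G| = m$ and $e(G) = \binom{m}{2} - 1$, one checks
$$\delta_\alpha(G) = m - \frac{2}{m-1}\left(\binom{m}{2}-1\right) = \frac{2}{m-1} > 0.$$
Because distinct petals of $H$ share only the center vertex, we have $|H| = n(m-1) + 1$ and $e(H) = n\bigl(\binom{m}{2}-1\bigr)$, and a short computation yields
$$\delta_\alpha(H) = 1 - \frac{n(m-3)}{m-1}.$$
Part (2) is then immediate: when $n \geq (m-1)/(m-3)$, this quantity is non-positive, so $H$ itself (as a nonempty induced subgraph of itself) witnesses $H \notin \K^+_\alpha$.

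For part (1), assume $n < (m-1)/(m-3)$ and let $B \subseteq H$ be a nonempty induced subgraph. Write $B_i = B \cap P_i$, where $P_i \cong G$ is the $i$th petal, and let $s_i = |B_i|$; since petals share only the center, $e(B) = \sum_i e(B_i)$. I would split on whether the center $1$ lies in $B$. If $1 \notin B$, then $B$ is the disjoint union of the $B_i \subseteq P_i \setminus\{1\}$, each an induced subgraph on $s_i \leq m-1$ vertices of a graph contained in $K_{m-1}$; the trivial bound $e(B_i) \leq \binom{s_i}{2}$ yields
$$\delta_\alpha(B_i) \geq s_i - \tfrac{2}{m-1}\binom{s_i}{2} = \frac{s_i(m - s_i)}{m-1} > 0$$
for each nonempty piece, so $\delta_\alpha(B) > 0$. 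If $1 \in B$, then $s_i \geq 1$ for all $i$ and $|B| = 1 + \sum_i(s_i - 1)$, so
$$\delta_\alpha(B) = 1 + \sum_{i=1}^n \bigl[(s_i - 1) - \alpha\, e(B_i)\bigr].$$
For each $i$ with $s_i < m$, the same trivial bound forces the summand to be $\geq (s_i-1)(m-1-s_i)/(m-1) \geq 0$; for each $i$ with $s_i = m$ we necessarily have $B_i = P_i \cong G$, and the summand equals exactly $\delta_\alpha(G) - 1 = (3-m)/(m-1)$. Letting $k$ count the full petals, we obtain $\delta_\alpha(B) \geq 1 - k \cdot (m-3)/(m-1)$, which is strictly positive since $k \leq n < (m-1)/(m-3)$.

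The main obstacle is the bookkeeping in the center-in-$B$ case, where the key is that the calibrated choice $\alpha = 2/(m-1)$ makes $K_{m-1}$ exactly saturate $\delta_\alpha \geq 0$ (so partially filled petals never contribute negatively), while each fully filled petal drops $\delta_\alpha$ by the precise amount $(m-3)/(m-1)$, leaving just enough slack when $n < (m-1)/(m-3)$.
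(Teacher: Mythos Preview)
Your proof is correct and follows essentially the same approach as the paper's: compute $\delta_\alpha(H)$ directly for part (2), and for part (1) split on whether the center lies in $B$, decomposing into petal intersections in each case. The only minor difference is that in the center-in-$B$ case with $s_i < m$, the paper explicitly identifies $B_i$ as either $K_{s_i}$ or $K_{s_i}$ minus an edge and bounds each type separately, whereas you use the cruder estimate $e(B_i) \le \binom{s_i}{2}$ uniformly; your version is slightly more streamlined but the argument is the same.
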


\begin{proof}
	First, since $m \geq 4$, $0 < \alpha \leq \frac{2}{3}$ and $1 < \frac{m-1}{m-3} \le 3$.  Note that, if $1 \leq \ell < m$, then
	\[
	\delta_\alpha(K_\ell) = \ell - \alpha \binom{\ell}{2} = \ell - \frac{\ell(\ell-1)}{m-1} \geq \ell - \frac{(m-1)(\ell-1)}{m-1} = 1.
	\]
	Similarly, if $1 \leq \ell \leq m$ and $B$ is $K_\ell$ with one edge removed, then
	\[
	\delta_\alpha(B) = \ell - \alpha \left( \binom{\ell}{2} - 1 \right) = \ell - \frac{\ell(\ell-1)}{m-1} + \frac{2}{m-1} \geq \frac{2}{m-1},
	\]
	with equality if $\ell = m$.
	
	(1): Assume $n < \frac{m-1}{m-3}$, fix $H \in \PW(G,n)$, and fix $A \subseteq H$ non-empty.  If $A$ does not contain the center of $H$, then $A$ is a disjoint union of graphs, each with at most $m - 1$ vertices, which are either complete or missing a single edge.  As noted above, each of these graphs have $\delta_\alpha > 0$.  Therefore, $\delta_\alpha(A) > 0$.  Thus, $H \in \K^+_\alpha$.
	
	Therefore, we may assume that $A$ contains the center.  For each $i \in [n]$, let $B_i$ denote the intersection of $A$ with the $i$th petal of $H$ and let $\ell_i = | B_i |$.  Then, $B_i$ is isomorphic to either $K_{\ell_i}$ and $1 \leq \ell_i < m$ or $K_{\ell_i}$ missing an edge and $1 \leq \ell_i \leq m$.  In the first case, $\delta_\alpha(B_i) - 1 \geq 0$ and, in the second case, $\delta_\alpha(B_i) - 1 \geq \frac{2}{m-1} - 1 = -\frac{m-3}{m-1}$.  Then,
	\[
	\delta_\alpha(A) = \sum_{i=1}^n \left( \delta_\alpha(B_i) - 1 \right) + 1 \geq 1 - n \cdot \frac{m-3}{m-1} > 1 - \frac{m-1}{m-3} \cdot \frac{m-3}{m-1} = 0.
	\]
	Hence, $H \in \K^+_\alpha$.
	
	(2): Assume $n \geq \frac{m-1}{m-3}$ and fix $H \in \PW(G,n)$.  Then,
	\[
	\delta_\alpha(H) = \sum_{i=1}^n \left( \delta_\alpha(G) - 1 \right) + 1 = 1 - n \cdot \frac{m-3}{m-1} \leq 1 - \frac{m-1}{m-3} \cdot \frac{m-3}{m-1} = 0.
	\]
	Hence, $H \notin \K^+_\alpha$.
\end{proof}

Finally, we get the desired result for $\K^+_\alpha$.

\begin{theorem}\label{Theorem_Sparseindivisible2}
	For all $\alpha > 0$, $\K^+_\alpha$ is indivisible if and only if $\alpha \geq 2$.
\end{theorem}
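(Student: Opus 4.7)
The ``if'' direction is immediate from Proposition \ref{Proposition_classifyKlarge}: when $\alpha \geq 2$, $\K^+_\alpha$ is exactly the class of finite null graphs, which is indivisible by the pigeonhole principle. For the ``only if'' direction, I fix $\alpha \in (0,2)$ and exhibit a witness $A \in \K^+_\alpha$ together with, for every $B \in \K^+_\alpha$, a coloring of $B$ with no monochromatic copy of $A$.

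When $1 \leq \alpha < 2$, Proposition \ref{Proposition_classifyKlarge} places $\K^+_\alpha$ inside the class $\F$ of finite forests, so every $B \in \K^+_\alpha$ is bipartite. I take $A = K_2$; its non-empty subgraphs $K_1$ and $K_2$ have $\delta_\alpha$-values $1$ and $2 - \alpha$ respectively, both positive, so $A \in \K^+_\alpha$. A proper $2$-coloring of $B$ (which exists because $B$ is bipartite) separates the endpoints of every edge, hence avoids all monochromatic copies of $A$.

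When $0 < \alpha < 1$, I follow the shape of the proof of Theorem \ref{Theorem_Sparseindivisible} but with pseudo-windmills. If $\alpha = 2/(m-1)$ for some integer $m \geq 4$, I set $A = K_m - e$ (in $\K^+_\alpha$ by Lemma \ref{Lemma_PseudoWindmillSize}(1) applied at $n = 1$) and $n_0 = \lceil (m-1)/(m-3) \rceil - 1$, so Lemma \ref{Lemma_PseudoWindmillSize} gives $\PW(A, n_0) \subseteq \K^+_\alpha$ and $\PW(A, n_0 + 1) \cap \K^+_\alpha = \emptyset$. Otherwise, I set $m = \lfloor 2/\alpha + 1 \rfloor$ and $A = K_m$; since $2/m < \alpha < 2/(m-1)$ we have $\delta_\alpha(K_m) > 0$ strictly, and the calculation from Lemma \ref{Lemma_WindmillGraphs} (with strict inequality) yields a maximal $n_0$ with $\Wd(m, n_0) \in \K^+_\alpha$ and $\Wd(m, n_0 + 1) \notin \K^+_\alpha$. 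In either sub-case I run the coloring argument of Theorem \ref{Theorem_Sparseindivisible} verbatim: for any $B \in \K^+_\alpha$, pick an $(n_0+1)$-coloring $c$ minimizing the number of monochromatic copies of $A$; assuming the minimum is positive, swap the color of a single vertex $b_0 = f(a_0)$ through all $n_0 + 1$ values to extract $n_0 + 1$ monochromatic copies $f_\ell : A \to B$ with $f_\ell(A) \cap f_{\ell'}(A) = \{b_0\}$ for $\ell \neq \ell'$. Then $\bigcup_\ell f_\ell(A)$ supports a subgraph of $B$ that contains a member of $\PW(A, n_0 + 1)$. Since $\K^+_\alpha$ is closed under passage to (not necessarily induced) subgraphs---removing edges only increases $\delta_\alpha$---this pseudo-windmill lies in $\K^+_\alpha$, contradicting $\PW(A, n_0+1) \cap \K^+_\alpha = \emptyset$.

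The main subtlety is the split at $\alpha = 2/(m-1)$: at these values $\delta_\alpha(K_m) = 0$, so $K_m \notin \K^+_\alpha$ and one is forced to drop down to $A = K_m - e$. Because the copy-swapping construction cannot control which particular pseudo-windmill of $A$ arises inside $B$, we genuinely need Lemma \ref{Lemma_PseudoWindmillSize}(2) in its full strength, asserting that \emph{every} graph in $\PW(A, n_0 + 1)$ lies outside $\K^+_\alpha$, rather than merely some distinguished representative.
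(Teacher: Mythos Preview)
Your proof is correct and follows essentially the same approach as the paper: the coloring-minimization trick from Theorem~\ref{Theorem_Sparseindivisible}, with windmills when $m < 2/\alpha + 1$ and pseudo-windmills (via Lemma~\ref{Lemma_PseudoWindmillSize}) at the critical values $\alpha = 2/(m-1)$. The only difference is organizational---you dispatch the whole range $1 \le \alpha < 2$ with the forest/bipartite observation, whereas the paper reserves that trick for the single value $\alpha = 1$ and uses $\Wd(2,n)$ (stars) for $1 < \alpha < 2$; your version is a mild simplification there.
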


\begin{proof}
	As previously noted, for $\alpha \geq 2$, $\K^+_\alpha$ is the class of all finite null graphs, hence is indivisible.
	
	%% This paragraph has been rewritten to reference Theorem 3.1 %%
	Suppose $0 < \alpha < 2$ and let $m = \left\lfloor \frac{2}{\alpha} + 1 \right\rfloor$.  If $m < \frac{2}{\alpha} + 1$, then, by the proofs of Corollary \ref{Corollary_CompleteGraphsKalpha} and Lemma \ref{Lemma_WindmillGraphs}, $K_m \in \K^+_\alpha$ and, for all positive integers $n$, $\Wd(m,n) \in \K^+_\alpha$ if and only if $n < \frac{2}{(\alpha m - 2)(m - 1)}$.  Therefore, by Theorem \ref{Theorem_IndivisibleStrongAmalgam}, $\K^+_\alpha$ is not indivisible.
	
	Thus, we may assume that $m = \frac{2}{\alpha} + 1$, and hence $\alpha = \frac{2}{m-1}$.  Since $\alpha < 2$, we may assume $m \geq 3$.  When $m = 3$, $\alpha = 1$, and by Proposition \ref{Proposition_classifyKlarge} (2), $\K^+_1 = \F$, the class of all finite forests.  This is not indivisible; for example, let $A = K_2$, take any $B \in \F$, and let $c$ be a proper $2$-coloring of $B$ (which exists since $B$ is bipartite).  Then, there exists no monochromatic copy of $A$ in $B$.
	
	Therefore, we may assume that $m \geq 4$.  Let $G$ be $K_m$ with one edge removed and let
	\[
	n = \left\lceil \frac{m-1}{m-3} \right\rceil.
	\]
	By Lemma \ref{Lemma_PseudoWindmillSize}, $\PW(G,n) \cap \K^+_\alpha = \emptyset$ (but $G \in \K^+_\alpha$).
	%% OLD VERISON %%
	%	Let $A = G$, fix $B \in \K^+_\alpha$, and suppose that any $n$-coloring $c$ of $B$ has a monochromatic copy of $A$ with respect to $c$.  Let $c$ be an $n$-coloring of $B$ with the minimal number of monochromatic copies of $A$.  We follow the same argument as Theorem \ref{Theorem_Sparseindivisible}, changing the color $c$ at a single vertex of some monochromatic copy of $A$, which produces a subgraph of $B$ isomorphic to one in $\PW(G,n)$.  This is a contradiction.  Therefore, $\K^+_\alpha$ is not indivisible.
	%% NEW VERSION %%
	If $\K^+_\alpha$ is indivisible, then, by Theorem \ref{Theorem_IndivisibleStrongAmalgam}, there exist $B \in \K^+_\alpha$, $b_0 \in B$, and embeddings $f_1, \dots, f_n : G \rightarrow B$ such that, for all distinct $i, j \in [n]$, $f_i(G) \cap f_j(G) = \{ b_0 \}$.  That is, $B$ contains a (not necessarily induced) subgraph that is isomorphic to one in $\PW(G,n)$, a contradiction.  Therefore, $\K^+_\alpha$ is not indivisible.
\end{proof}

%%%%%%%%%%%%%%%%%%%%%%%%%%%%%%%%%%%%%%
%% SECTION: Forbidden Substructures %%
%%%%%%%%%%%%%%%%%%%%%%%%%%%%%%%%%%%%%%

\section{Classes of graphs with forbidden substructures}\label{Section_Forbidden}

Before we examine graphs in particular, we begin by looking at classes of arbitrary $L$-structures for some relational language $L$ where each relation symbol is at least binary (this covers, for example, classes of graphs or classes of linear orders).

First, we consider the following lemma, whose proof is straightforward.

\begin{lemma}\label{Lemma_Embeddings}
	Let $L$ be a relational language where each relation symbol is at least binary.  For all irreflexive $L$-structures $A$ and $B$, for all $a^* \in A$ and functions $h : A \rightarrow B$, the functions $f : A \rightarrow A[B]$ and $g : B \rightarrow A[B]$ given by $f(a) = (a, h(a))$ and $g(b) = (a^*,b)$ are embeddings.
\end{lemma}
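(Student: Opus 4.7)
The plan is to verify that $f$ and $g$ are embeddings separately, in each case checking injectivity and then preservation of every relation symbol $R \in L$ in both directions via Definition \ref{Definition_LexProduct}. Injectivity is immediate: $f(a_1) = f(a_2)$ forces agreement in the first coordinate, giving $a_1 = a_2$, and $g(b_1) = g(b_2)$ forces agreement in the second coordinate, giving $b_1 = b_2$. So the real content lies in relation-preservation for a fixed relation symbol $R$ of arity $n \geq 2$.

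For the forward direction of relation-preservation, both cases are immediate from the definition. The map $f$ lifts a tuple in $R^A$ into $R^{A[B]}$ via clause (1) of Definition \ref{Definition_LexProduct}, since the first coordinates of $(f(a_1),\ldots,f(a_n))$ are exactly $(a_1,\ldots,a_n)$. The map $g$ lifts a tuple in $R^B$ into $R^{A[B]}$ via clause (2), since all first coordinates of $(g(b_1),\ldots,g(b_n))$ equal the same element $a^*$.

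The backward direction is where the hypotheses on irreflexivity and on arity at least two come into play. For $f$: if $(f(a_1),\ldots,f(a_n)) \in R^{A[B]}$, then either clause (1) directly yields $(a_1,\ldots,a_n) \in R^A$, or clause (2) forces $a_1 = \cdots = a_n$ together with $(h(a_1),\ldots,h(a_n)) \in R^B$; in the latter case the tuple $(h(a_1),\ldots,h(a_n))$ has $n \geq 2$ identical entries, contradicting irreflexivity of $B$. For $g$: if $(g(b_1),\ldots,g(b_n)) \in R^{A[B]}$, then clause (1) would require $(a^*,\ldots,a^*) \in R^A$, which violates irreflexivity of $A$ (as $n \geq 2$), so clause (2) must apply and deliver $(b_1,\ldots,b_n) \in R^B$.

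There is no substantive obstacle here; the lemma is billed as straightforward for exactly this reason. The single observation doing all the work is that irreflexivity together with arity $\geq 2$ excludes any tuple of identical entries from a relation, and this is precisely what rules out the ``wrong'' clause of the lexicographic product in each backward direction.
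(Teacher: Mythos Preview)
Your proof is correct and is exactly the routine verification the paper has in mind; the paper itself omits the argument entirely, noting only that the proof is straightforward. Your use of irreflexivity together with arity $\geq 2$ to rule out the ``wrong'' clause of Definition \ref{Definition_LexProduct} in each backward direction is precisely the point.
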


This gives rise to a sufficient condition for indivisibility, which we will later use to examine classes of graphs.

\begin{theorem}\label{Theorem_Miriams}
	Let $L$ be a relational language where each relation symbol is at least binary and let $\K$ be a class of irreflexive $L$-structures (note that the language nor the structures need to be finite).  If, for all $A \in \K$, $A[A] \in \K$, then $\K$ is indivisible.
\end{theorem}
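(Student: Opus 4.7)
The plan is to invoke Theorem \ref{Theorem_TwoColors} and reduce to showing that, for every $A \in \K$, the structure $B = A[A]$ (which lies in $\K$ by hypothesis) admits a monochromatic copy of $A$ under every $2$-coloring. So fix $A \in \K$, set $B = A[A]$, and let $c : A \times A \to [2]$ be an arbitrary $2$-coloring. The whole argument will be a simple case split based on whether some ``column'' is monochromatic, exploiting the two types of embeddings into $A[A]$ that Lemma \ref{Lemma_Embeddings} provides.

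First, suppose there exists $a^* \in A$ such that the set $\{a^*\} \times A$ is monochromatic under $c$. By Lemma \ref{Lemma_Embeddings}, the map $g : A \to A[A]$ given by $g(b) = (a^*, b)$ is an embedding, and by construction $c \circ g$ is constant. Hence $g$ is a monochromatic copy of $A$ in $B$, and we are done in this case.

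Otherwise, for every $a \in A$, the set $\{a\} \times A$ contains points of both colors. In particular, for each $a \in A$ we may choose $h(a) \in A$ with $c(a, h(a)) = 1$. Then, again by Lemma \ref{Lemma_Embeddings}, the map $f : A \to A[A]$ defined by $f(a) = (a, h(a))$ is an embedding, and $c \circ f \equiv 1$. So $f$ witnesses a monochromatic copy of $A$ in $B$. In either case every $2$-coloring of $B$ admits a monochromatic copy of $A$, so Theorem \ref{Theorem_TwoColors} yields indivisibility of $\K$.

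There is no real obstacle here: the content of the argument is entirely packaged into Lemma \ref{Lemma_Embeddings}, which furnishes both the ``vertical'' embedding $b \mapsto (a^*, b)$ and the ``graph-of-a-function'' embedding $a \mapsto (a, h(a))$. The only thing to be careful about is that the function $h$ in the second case may be chosen arbitrarily (no structural condition on $h$ is needed) precisely because the lexicographic product places the $B$-component in charge only when the $A$-components all coincide, which is why Lemma \ref{Lemma_Embeddings} makes the map $a \mapsto (a, h(a))$ an embedding for any $h$.
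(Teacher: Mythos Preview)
Your proof is correct and follows essentially the same approach as the paper's: reduce to $2$-colorings via Theorem~\ref{Theorem_TwoColors}, set $B = A[A]$, and split into the case where some column $\{a^*\} \times A$ is monochromatic (use the embedding $b \mapsto (a^*,b)$) versus the case where every column meets both colors (choose a section $h$ and use $a \mapsto (a,h(a))$), invoking Lemma~\ref{Lemma_Embeddings} in both cases. The only cosmetic difference is that the paper phrases the first case as ``some column is entirely color $1$'' rather than ``some column is monochromatic,'' which merely shifts which color is selected in the second case.
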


\begin{proof}
	By Theorem \ref{Theorem_TwoColors}, it suffices to check indivisibility for $2$-colorings.  Fix $A \in \K$ and let $B = A[A]$.  Fix any $2$-coloring $c$ of $B$.  Suppose that, for some $a^* \in A$, $c( \{ a^* \} \times A ) = \{ 1 \}$.  Let $f : A \rightarrow B$ be given by, for all $a \in A$, $f(a) = (a^*, a)$.  By Lemma \ref{Lemma_Embeddings}, $f$ is an embedding of $A$ into $B$.  Thus, $f$ is a monochromatic copy of $A$ in $B$.
	
	Thus, we may assume that, for each $a \in A$, there exists $b_a \in A$ such that $c(a, b_a) = 2$.  Let $g : A \rightarrow B$ be given by $g(a) = (a,b_a)$.  By Lemma \ref{Lemma_Embeddings}, $g$ is an embedding of $A$ into $B$.  Thus, $g$ is a monochromatic copy of $A$ in $B$.
\end{proof}

Note that this also works for classes of infinite structures.  For example, we get the following corollary.

\begin{corollary}\label{Corollary_Graphs}
	Let $\kappa$ be an infinite cardinal and let $\G_{< \kappa}$ be the class of all graphs of size less than $\kappa$.  Then, $\G_{< \kappa}$ is indivisible.
\end{corollary}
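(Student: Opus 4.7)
The plan is to apply Theorem \ref{Theorem_Miriams} essentially verbatim. The language of graphs is $L = \{E\}$ where $E$ is binary, so the condition that every relation symbol is at least binary is satisfied, and graphs are irreflexive $L$-structures by convention. So the only thing to check is that $\G_{<\kappa}$ is closed under the operation $A \mapsto A[A]$.

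I would first verify that $A[A]$ is genuinely a graph whenever $A$ is; this is immediate from Definition \ref{Definition_LexProduct}, since the lexicographic product preserves both irreflexivity and symmetry of the binary relation (irreflexivity: no tuple $((a,b),(a,b))$ can land in $E^{A[A]}$ because neither $(a,a) \in E^A$ nor, in the second clause, $(b,b) \in E^A$; symmetry is checked coordinate-wise using the two clauses). Hence $A[A] \in \G$.

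Next I would bound the cardinality of $A[A]$. The universe of $A[A]$ is $A \times A$, so $|A[A]| = |A|^2$. Split into two cases: if $A$ is finite then $|A|^2$ is finite, and in particular $|A|^2 < \kappa$ since $\kappa$ is infinite. If $A$ is infinite with $|A| < \kappa$, then by standard cardinal arithmetic $|A|^2 = |A| < \kappa$. In either case $A[A] \in \G_{<\kappa}$, so the hypothesis of Theorem \ref{Theorem_Miriams} is met and $\G_{<\kappa}$ is indivisible.

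There is really no obstacle here; the entire content of the corollary is the cardinal-arithmetic observation that $|A|^2 < \kappa$ whenever $|A| < \kappa$ and $\kappa$ is infinite, plus the (trivial) fact that the lexicographic product of graphs is a graph. The point of stating the corollary is rather to emphasize that Theorem \ref{Theorem_Miriams} gives indivisibility in the infinite setting without modification, recovering a classical fact about indivisibility for the class of all graphs of bounded size.
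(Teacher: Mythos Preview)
Your proposal is correct and follows exactly the same approach as the paper's proof: verify that $\G_{<\kappa}$ is closed under $A \mapsto A[A]$ and invoke Theorem \ref{Theorem_Miriams}. You simply fill in more detail (checking that $A[A]$ is a graph and splitting the cardinality bound into finite and infinite cases), whereas the paper compresses this into the single sentence ``If $A$ is a graph of size less than $\kappa$, then $A[A]$ is a graph of size less than $\kappa$.''
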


\begin{proof}
	If $A$ is a graph of size less than $\kappa$, then $A[A]$ is a graph of size less than $\kappa$.  Thus, by Theorem \ref{Theorem_Miriams}, $\G_{< \kappa}$ is indivisible.
\end{proof}

For the remainder of this section, we will let $L$ be the language of graphs.  We will consider $\forbG(\A)$ for $\A$ a set of finite graphs.  We begin with a lemma that shows that graph classes that only forbid sufficiently large paths or cycles are closed under lexicographic product.

\begin{lemma}\label{Lemma_ForbidPathCycle}
	If $F$ is either $P_n$ for $n \geq 4$ or $C_n$ for $n \geq 5$, and $A, B \in \forbG(F)$, then $A[B] \in \forbG(F)$.
\end{lemma}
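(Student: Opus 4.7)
The plan is to argue by contradiction: suppose $f : F \to A[B]$ is an embedding, write $f(v) = (\alpha(v), \beta(v))$ with $\alpha : F \to A$ and $\beta : F \to B$, and for each $a \in A$ let $V_a = \alpha^{-1}(a) \subseteq F$. From the definition of the lexicographic product, for any $a \in A$ and any $v \in F \setminus V_a$, whether $v$ is adjacent in $F$ to a given $w \in V_a$ depends only on whether $(\alpha(v), a) \in E^A$, not on the choice of $w$. Thus each fiber $V_a$ is a \emph{module} of $F$ (a vertex set all of whose members share the same neighbors outside it), and if $|V_a| \geq 2$ then $\beta|_{V_a}$ is an embedding of the induced subgraph of $F$ on $V_a$ into $B$.

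The key step is to show that $P_n$ for $n \geq 4$ and $C_n$ for $n \geq 5$ are \emph{prime}, meaning their only modules are singletons and the full vertex set. For $P_n$, I would take a module $M$ with $2 \leq |M| < n$ and examine a vertex $b \notin M$ adjacent to some vertex of $M$ (which exists by connectivity): the module property forces $b$ to be adjacent to every vertex of $M$, but $b$ has at most two neighbors on the path, so $|M| = 2$ and $M$ consists of the two neighbors of some internal vertex $b = v_i$, namely $\{v_{i-1}, v_{i+1}\}$. Since $n \geq 4$, there is a further vertex $v_{i-2}$ or $v_{i+2}$, which is adjacent to exactly one of $v_{i-1}, v_{i+1}$, breaking the module condition. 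A parallel degree-$2$ argument handles $C_n$ with $n \geq 5$: again $|M| \leq 2$, and if $M$ equals the two cycle-neighbors of some $b = v_i$, then $v_{i-2}$ is adjacent to $v_{i-1}$ but not to $v_{i+1}$ (this is precisely where $n \geq 5$ is used, since for $n = 4$ one has $v_{i-2} = v_{i+2}$ which is adjacent to both).

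With primality established, each fiber $V_a$ is either a singleton or all of $F$. If some $V_a$ equals $F$, then $\alpha$ is constant and $\beta$ is an embedding of $F$ into $B$, contradicting $B \in \forbG(F)$. Otherwise $\alpha$ is injective, and for any distinct $v, w \in F$ we have $\alpha(v) \neq \alpha(w)$, so adjacency of $f(v)$ and $f(w)$ in $A[B]$ is determined entirely by whether $(\alpha(v), \alpha(w)) \in E^A$; hence $\alpha$ is an embedding of $F$ into $A$, contradicting $A \in \forbG(F)$.

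The main obstacle I anticipate is the primality verification, especially the sharp cases $P_4$ and $C_5$ where a nontrivial module comes closest to existing; in both, however, the argument reduces to a short, direct case analysis on neighborhoods using the bounded degree of $F$.
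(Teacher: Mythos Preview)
Your proof is correct and takes a genuinely different route from the paper's. The paper argues directly: it picks two vertices $(a_{i_0},b_{i_0})$, $(a_{i_1},b_{i_1})$ sharing a first coordinate, finds a neighbor $(a_{j_0},b_{j_0})$ in a different fiber, and then rules out possibilities one by one (a third vertex in the same fiber would give $(a_{j_0},b_{j_0})$ degree $\geq 3$; a second external neighbor would produce a $C_4$; an edge inside the fiber would produce a $K_3$), concluding that the two fiber-mates are both leaves with a common neighbor, which is impossible in $P_n$ for $n\geq 4$ or $C_n$ for $n\geq 5$. You instead package the key observation as ``each fiber $V_a$ is a module of $F$'' and then prove once and for all that $P_n$ ($n\geq 4$) and $C_n$ ($n\geq 5$) are prime. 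The underlying combinatorics overlaps (both arguments ultimately exploit the degree bound $\leq 2$ and the existence of a distinguishing outside neighbor), but your modular-decomposition framing is cleaner and strictly more general: it shows immediately that $\forbG(F)$ is closed under lexicographic product for \emph{any} prime graph $F$, whereas the paper's ad hoc case analysis is tailored to paths and cycles. The paper's version, on the other hand, is entirely self-contained and avoids introducing the module concept.
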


\begin{proof}
	Fix distinct $(a_1, b_1), \dots, (a_n, b_n) \in A[B]$.  Suppose that $H$, the induced subgraph of $A[B]$ on these vertices, is isomorphic to $F$.  On the one hand, if $a_1 = \dots = a_n$, then the induced subgraph of $B$ on $\{ b_1, \dots, b_n \}$ is isomorphic to $F$, contrary to the fact that $B \in \forbG(F)$.  On the other hand, if the $a_i$'s are distinct, then the induced subgraph of $A$ on $\{ a_1, \dots, a_n \}$ is isomorphic to $F$, contrary to the fact that $A \in \forbG(F)$.  So neither of these cases hold.
	
	Fix distinct $i_0, i_1 \in [n]$ such that $a_{i_0} = a_{i_1}$.  Since $F$ is connected, there exists $j_0 \in [n]$ such that $a_{i_0} \neq a_{j_0}$ and $(a_{i_0}, a_{j_0}) \in E^A$.
	
	If $(b_{i_0}, b_{i_1}) \in E^B$, then $\{ (a_{i_0}, b_{i_0}), (a_{i_1}, b_{i_1}), (a_{j_0}, b_{j_0}) \}$ form a clique in $A[B]$, contrary to the fact that $H \cong F$.
	
	If there exists $i \in [n]$ with $i \neq i_0$, $i \neq i_1$, and $a_i = a_{i_0}$, then the degree of $(a_{j_0}, b_{j_0})$ in $H$ is at least three, contrary to the fact that $H \cong F$.  If there exists $j \in [n]$ with $j \neq j_0$ and $(a_{i_0}, a_j) \in E^A$, then the induced subgraph of $A[B]$ on
	\[
	\{ (a_{i_0}, b_{i_0}), (a_{j_0}, b_{j_0}), (a_{i_1}, b_{i_1}), (a_j, b_j) \}
	\]
	contains $C_4$ as a subgraph, contrary to the fact that $H \cong F$.
	
	Therefore, the degrees of $(a_{i_0}, b_{i_0})$ and $(a_{i_1}, b_{i_1})$ in $H$ are each exactly $1$.  Clearly this is a contradiction if $F = C_n$.  Moreover, if $F = P_n$ with $n \geq 4$, then this is also a contradiction, since both $(a_{i_0}, b_{i_0})$ and $(a_{i_1}, b_{i_1})$ are adjacent to a common vertex, $(a_{j_0}, b_{j_0})$.
\end{proof}

\begin{center}
	\begin{tikzpicture}
		\draw[gray] (0,0) rectangle (0.5,1.5);
		\draw[gray] (4,0) rectangle (4.5,1.5);
		\draw (0.25,0) node[anchor = north] {$a_{i_0}$};
		\draw (4.25,0) node[anchor = north] {$a_{j_0}$};
		\draw[very thick, gray] (0.25,0.5) -- (4.25,0.75) -- (0.25,1);
		\filldraw (0.25,0.5) circle (0.075);
		\draw (0,0.5) node[anchor = east] {$b_{i_0}$};
		\filldraw (0.25,1) circle (0.075);
		\draw (0,1) node[anchor = east] {$b_{i_1}$};
		\filldraw (4.25,0.75) circle (0.075);
		\draw (4.5,0.75) node[anchor = west] {$b_{j_0}$};
	\end{tikzpicture}
\end{center}

The next lemma follows immediately from equation \eqref{Equation_ForbComplement} and Proposition \ref{Proposition_LexComplement}.

\begin{lemma}\label{Lemma_ForbidComplement}
	Let $F$ be a finite graph and suppose that, for all $A, B \in \forbG(F)$, $A[B] \in \forbG(F)$.  Then, for all $A, B \in \forbG(\overline{F})$, $A[B] \in \forbG(\overline{F})$.
\end{lemma}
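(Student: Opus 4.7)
The plan is to argue purely by chasing the two results cited in the lemma statement, applying the complement operation to reduce the claim about $\forbG(\overline{F})$ to the hypothesis about $\forbG(F)$. Since $A \mapsto \overline{A}$ is an involution at both the structure level (Proposition \ref{Proposition_PreserveComplement}) and on classes, this should be short.

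First I would fix arbitrary $A, B \in \forbG(\overline{F})$ and aim to show $A[B] \in \forbG(\overline{F})$. By equation \eqref{Equation_ForbComplement}, we have
\[
    \forbG(\overline{F}) = \overline{\forbG(F)},
\]
so the hypothesis $A, B \in \forbG(\overline{F})$ is equivalent to $\overline{A}, \overline{B} \in \forbG(F)$. Applying the assumption of the lemma to the pair $\overline{A}, \overline{B}$, we obtain $\overline{A}\left[\overline{B}\right] \in \forbG(F)$. Next, by Proposition \ref{Proposition_LexComplement},
\[
    \overline{A[B]} = \overline{A}\left[\overline{B}\right],
\]
so $\overline{A[B]} \in \forbG(F)$. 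Finally, invoking equation \eqref{Equation_ForbComplement} once more (in the other direction), this is equivalent to $A[B] \in \overline{\forbG(F)} = \forbG(\overline{F})$, which is what we wanted.

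There is no real obstacle here; the lemma is essentially a formal consequence of the fact that complementation commutes with both $\forbG$ (equation \eqref{Equation_ForbComplement}) and the lexicographic product (Proposition \ref{Proposition_LexComplement}), plus the fact that complementation is an involution. The only care needed is to apply the equivalence $C \in \forbG(\overline{F}) \iff \overline{C} \in \forbG(F)$ in both directions and to recognize that the hypothesis must be instantiated at $\overline{A}, \overline{B}$ rather than at $A, B$ themselves.
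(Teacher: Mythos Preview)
Your proof is correct and is exactly the argument the paper intends: the paper does not write out a proof at all, merely stating that the lemma ``follows immediately from equation \eqref{Equation_ForbComplement} and Proposition \ref{Proposition_LexComplement},'' and your write-up is precisely the unpacking of that sentence.
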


We put all of this together to get the following proposition about the indivisibility of graphs which forbid certain substructures.

\begin{proposition}\label{Proposition_ForbidF}
	Let
	\[
	\mathcal{A} \subseteq \{ C_n : n \geq 5 \} \cup \{ P_n : n \geq 4 \} \cup \left\{ \overline{C_n} : n \geq 5 \right\} \cup \left\{ \overline{P_n} : n \geq 4 \right\}.
	\]
	Then, $\forbG(\mathcal{A})$ is indivisible.
\end{proposition}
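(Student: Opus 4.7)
The plan is to combine Theorem \ref{Theorem_Miriams} with Lemmas \ref{Lemma_ForbidPathCycle} and \ref{Lemma_ForbidComplement}, using Lemma \ref{Lemma_ForbiddenUnions} to handle the intersection across all forbidden graphs in $\mathcal{A}$.

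First, I would observe that $\forbG(\mathcal{A}) = \bigcap_{F \in \mathcal{A}} \forbG(F)$. This follows from Lemma \ref{Lemma_ForbiddenUnions} applied to $\mathcal{A} \cup \bnd(\G)$, since $\forbG(\mathcal{A}) = \forb(\mathcal{A} \cup \bnd(\G))$ and each $\forbG(F) = \forb(\{F\} \cup \bnd(\G))$. Next, for each individual $F \in \mathcal{A}$, I would verify that $\forbG(F)$ is closed under the lexicographic product: if $F = P_n$ with $n \geq 4$ or $F = C_n$ with $n \geq 5$, this is exactly Lemma \ref{Lemma_ForbidPathCycle}, and if $F = \overline{P_n}$ with $n \geq 4$ or $F = \overline{C_n}$ with $n \geq 5$, this follows by applying Lemma \ref{Lemma_ForbidComplement} to the appropriate base case from Lemma \ref{Lemma_ForbidPathCycle}.

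Now I would combine these closures: if $A, B \in \forbG(\mathcal{A})$, then for every $F \in \mathcal{A}$, both $A$ and $B$ lie in $\forbG(F)$, so by the closure just established, $A[B] \in \forbG(F)$. Intersecting over all $F \in \mathcal{A}$, we conclude $A[B] \in \forbG(\mathcal{A})$. In particular, $A[A] \in \forbG(\mathcal{A})$ for every $A \in \forbG(\mathcal{A})$.

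Finally, applying Theorem \ref{Theorem_Miriams} to $\K = \forbG(\mathcal{A})$ (noting that graphs are irreflexive and the language of graphs has only the binary relation $E$) yields that $\forbG(\mathcal{A})$ is indivisible, completing the proof. There is no real obstacle here since every ingredient has been established in the preceding lemmas; the only thing to be careful about is correctly identifying $\forbG(\mathcal{A})$ as an intersection so that closure under lexicographic product transfers from each $\forbG(F)$ to the whole intersection.
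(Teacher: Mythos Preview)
Your proposal is correct and follows essentially the same route as the paper: use Lemmas \ref{Lemma_ForbidPathCycle} and \ref{Lemma_ForbidComplement} to get closure of each $\forbG(F)$ under lexicographic product, pass to the intersection via Lemma \ref{Lemma_ForbiddenUnions}, and conclude with Theorem \ref{Theorem_Miriams}. The paper's proof is just a terser version of exactly what you wrote.
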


\begin{proof}
	By Lemma \ref{Lemma_ForbidPathCycle} and Lemma \ref{Lemma_ForbidComplement}, if $F$ is either $C_n$ or $\overline{C_n}$ for $n \geq 5$ or $P_n$ or $\overline{P_n}$ for $n \geq 4$, then $\forbG(F)$ is closed under lexicographic product.  By Lemma \ref{Lemma_ForbiddenUnions}, $\forbG(\mathcal{A})$ is closed under lexicographic product.  By Theorem \ref{Theorem_Miriams}, $\forbG(\mathcal{A})$ is indivisible.
\end{proof}

%% NEW VERSION %%
Note that the set $\A$ in this proposition may be infinite and may also include some graphs that are not $2$-connected, in contrast to Theorem 1 of \cite{NR}.

This proposition has a number of consequences, as many interesting classes of graphs are of the form $\forbG(\mathcal{A})$ for some such $\mathcal{A}$.  For example, consider ``cographs'' (e.g., Exercise 8.1.3 of \cite{West}).

\begin{definition}\label{Definition_Cograph}
	A finite graph $G$ is called a \emph{complement-reducible graph} (or \emph{cograph}) if it forbids $P_4$ as an induced subgraph.
\end{definition}

%% NEW VERSION %%
The following Corollary first appears as Theorem 4.2 in \cite{RS}.

\begin{corollary}\label{Corollary_Cographs}
	The class of all finite cographs, $\forbG(P_4)$, is indivisible.
\end{corollary}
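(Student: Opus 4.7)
The plan is to observe that this corollary is an immediate specialization of Proposition \ref{Proposition_ForbidF}. By definition, the class of finite cographs is exactly $\forbG(P_4)$, and taking $\mathcal{A} = \{ P_4 \}$ puts us in the scope of the proposition, since $P_4$ is of the form $P_n$ with $n \geq 4$. So first I would simply verify that $\{ P_4 \}$ satisfies the hypothesis
\[
    \mathcal{A} \subseteq \{ C_n : n \geq 5 \} \cup \{ P_n : n \geq 4 \} \cup \left\{ \overline{C_n} : n \geq 5 \right\} \cup \left\{ \overline{P_n} : n \geq 4 \right\},
\]
and then invoke Proposition \ref{Proposition_ForbidF} to conclude that $\forbG(P_4)$ is indivisible.

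Since all the real content is already contained in Proposition \ref{Proposition_ForbidF} (via Lemma \ref{Lemma_ForbidPathCycle} and Theorem \ref{Theorem_Miriams}), there is no further obstacle to address in the corollary itself. If one wished to be more self-contained, one could alternatively unwind the argument: check directly that $A[B] \in \forbG(P_4)$ whenever $A, B \in \forbG(P_4)$ (using Lemma \ref{Lemma_ForbidPathCycle} with $F = P_4$), and then apply Theorem \ref{Theorem_Miriams} to $A[A]$ to produce monochromatic copies under any $2$-coloring. But the cleanest presentation is the one-line appeal to Proposition \ref{Proposition_ForbidF}, and I expect no obstacle beyond stating this correctly.
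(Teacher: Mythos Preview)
Your proposal is correct and matches the paper's approach exactly: the corollary is stated immediately after Proposition~\ref{Proposition_ForbidF} as a direct consequence of taking $\mathcal{A} = \{P_4\}$, with no separate proof given. Your optional unwinding via Lemma~\ref{Lemma_ForbidPathCycle} and Theorem~\ref{Theorem_Miriams} also accurately traces the underlying argument.
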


Next, consider ``perfect graphs'' (e.g., Definition 5.3.18 of \cite{West}).

\begin{definition}\label{Definition_PerfectGraph}
	A finite graph $G$ is called a \emph{perfect graph} if, for every induced subgraph $H$ of $G$, the chromatic number of $H$ is equal to the size of largest clique of $H$.
\end{definition}

Cographs are a subclass of perfect graphs, as can be observed by the following forbidden induced subgraph characterization of perfect graphs.

\begin{theorem}[Theorem 1.2 of \cite{CRST}]\label{Theorem_PerfectClassify}
	A finite graph $G$ is perfect if and only if $G$ forbids $C_n$ and $\overline{C_n}$ for all odd $n \geq 5$.  That is, the class of all perfect graphs is
	\[
	\forbG\left( \left\{ C_n : n \geq 5 \text{ odd} \right\} \cup \left\{ \overline{C_n} : n \geq 5 \text{ odd} \right\} \right).
	\]
\end{theorem}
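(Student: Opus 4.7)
The statement is the Strong Perfect Graph Theorem of Chudnovsky--Robertson--Seymour--Thomas, so an honest proof proposal must concede up front that one direction is a one-paragraph calculation and the other is a 150-page decomposition argument; I would split accordingly.

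For the easy direction (every perfect graph is Berge, i.e.\ forbids odd holes and odd antiholes), I would argue directly from \textbf{Definition \ref{Definition_PerfectGraph}}. For odd $n\ge 5$, the cycle $C_n$ has clique number $2$ but chromatic number $3$, so $C_n$ is not perfect; hence no induced copy can appear in a perfect graph (perfection is hereditary, straight from the definition). For the antihole $\overline{C_n}$ with $n\ge 5$ odd, the clique number is $\lfloor n/2\rfloor$ but the chromatic number is $\lceil n/2\rceil$, so again $\overline{C_n}$ is not perfect. This handles one implication with essentially no work, and it is the only part I would actually carry out in full in the paper (the rest being attributed to \cite{CRST}).

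For the converse--the content of the theorem--the plan is to reproduce the Chudnovsky--Robertson--Seymour--Thomas strategy, which runs via a structural decomposition theorem for Berge graphs. First I would introduce the \emph{basic Berge classes}: bipartite graphs, line graphs of bipartite graphs, their complements, and the so-called double split graphs. For each basic class one checks perfection directly (bipartite graphs are trivially perfect; line graphs of bipartite graphs are perfect by K\"onig's edge-coloring theorem; complements inherit perfection by Lov\'asz's Perfect Graph Theorem; double split graphs require a short ad hoc verification). Next I would introduce the list of \emph{structural decompositions}: the $2$-join, its complement, the homogeneous pair, and the balanced skew partition, and establish, by induction on $|G|$, that each such decomposition preserves perfection, i.e.\ if every proper ``block'' of the decomposition is perfect, then so is $G$.

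With the basic classes and the good decompositions in place, the core of the argument is the Decomposition Theorem: every Berge graph is either in one of the basic classes or admits one of the listed decompositions. This is the step that occupies essentially the entire CRST paper and is the main obstacle by an enormous margin; proving it requires a long case analysis driven by the configurations a minimal Berge counterexample would have to contain or avoid (``even prisms,'' ``long prisms,'' ``wheels,'' and the elaborate trace/attachment analysis of these configurations). Given the proof proposal is a plan, I would outline the induction on $|G|$: pick a minimal non-perfect Berge graph, apply the Decomposition Theorem, obtain either a basic-class contradiction or a decomposition whose pieces are perfect by minimality and hence imply $G$ is perfect. In the actual paper the sensible choice--and the one the authors make--is to cite \cite{CRST} and move on, using the theorem as a black box to describe the class of perfect graphs in the $\forbG$ framework needed for Section \ref{Section_Forbidden}.
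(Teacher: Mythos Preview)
Your reading is correct: the paper does not prove this statement at all---it is stated as a citation (Theorem~1.2 of \cite{CRST}) and used as a black box to put the class of perfect graphs into the $\forbG$ framework, exactly as you anticipated in your final paragraph. Your sketch of the easy direction and your outline of the CRST decomposition strategy are accurate, but none of it appears in the paper; there is nothing further to compare.
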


Therefore, the following corollary holds.

\begin{corollary}\label{Corollary_PerfectGraphs}
	The class of all finite perfect graphs is indivisible.
\end{corollary}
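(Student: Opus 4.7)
My plan is to observe that this corollary is a direct consequence of combining Theorem \ref{Theorem_PerfectClassify} (the Strong Perfect Graph Theorem) with Proposition \ref{Proposition_ForbidF}. All the substantive work has already been done in the lexicographic product machinery (Lemma \ref{Lemma_ForbidPathCycle}, Lemma \ref{Lemma_ForbidComplement}, and Theorem \ref{Theorem_Miriams}), so the argument is a one-line unpacking.

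More precisely, I would set
\[
    \mathcal{A} = \left\{ C_n : n \geq 5 \text{ odd} \right\} \cup \left\{ \overline{C_n} : n \geq 5 \text{ odd} \right\}.
\]
Then by Theorem \ref{Theorem_PerfectClassify}, the class of all finite perfect graphs is exactly $\forbG(\mathcal{A})$. Since every element of $\mathcal{A}$ is either $C_n$ or $\overline{C_n}$ for some $n \geq 5$, we have
\[
    \mathcal{A} \subseteq \left\{ C_n : n \geq 5 \right\} \cup \left\{ P_n : n \geq 4 \right\} \cup \left\{ \overline{C_n} : n \geq 5 \right\} \cup \left\{ \overline{P_n} : n \geq 4 \right\},
\]
so Proposition \ref{Proposition_ForbidF} applies directly to conclude that $\forbG(\mathcal{A})$ is indivisible.

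There is no real obstacle here; the only ``choice'' in the proof is to recognize that the Strong Perfect Graph Theorem hands us exactly the shape of forbidden family to which the earlier proposition applies (odd holes and odd antiholes fit into the menu of allowed forbidden subgraphs, even though paths are not needed for this particular application). I would write the proof as essentially a single sentence citing both results.
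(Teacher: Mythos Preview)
Your proof is correct and is exactly the approach taken in the paper: the corollary is presented as an immediate consequence of Theorem~\ref{Theorem_PerfectClassify} together with Proposition~\ref{Proposition_ForbidF}, with no additional argument given. The paper also remarks that the result follows alternatively from \cite{RP} (closure of perfect graphs under lexicographic product) combined with Theorem~\ref{Theorem_Miriams}, but your route via the Strong Perfect Graph Theorem is the primary one.
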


Note that this is also a consequence of \cite{RP}, which proves that, for all finite graphs $A$ and $B$, $A[B]$ is perfect if and only if $A$ and $B$ are perfect.

Next, consider the class of ``chordal graphs'' (e.g., Definition 5.3.15 of \cite{West}), which is also a subclass of the perfect graphs.

\begin{definition}\label{Definition_Chordal}
	A finite graph $G$ is called a \emph{chordal graph} if it forbids $C_n$ as an induced subgraph for all $n \geq 4$.
\end{definition}

Clearly Proposition \ref{Proposition_ForbidF} does not directly apply to the class of chordal graphs, since $C_4$-free graphs are not closed under lexicographic product (for example, $K_2[N_2]$ is isomorphic to $C_4$).  Therefore, we need a slightly different argument to prove the following proposition.

\begin{proposition}\label{Proposition_Chordal}
	The class of all finite chordal graphs is indivisible.
\end{proposition}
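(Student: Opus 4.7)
The plan is to prove indivisibility of the class of finite chordal graphs by induction on $|A|$, exploiting the characteristic feature of chordal graphs: every chordal graph has a simplicial vertex. By Theorem~\ref{Theorem_TwoColors} it suffices to handle $2$-colorings, and the same theorem lets me strengthen the induction hypothesis to allow $k$-colorings for arbitrary $k$ (since a witness for the $2$-color case yields, by iteration, a witness for any $k$).

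For the base case $|A|=1$, take $B = A$. For the inductive step, let $v$ be a simplicial vertex of $A$, let $C = N_A(v)$ (a clique in $A$), and set $A' = A - v$, which is chordal. By induction applied to $A'$ with a sufficiently large number of colors $k$, obtain a chordal graph $B'$ such that every $k$-coloring of $B'$ admits a monochromatic copy of $A'$. Construct $B$ from $B'$ by attaching a large number $M$ of new simplicial vertices to each $|C|$-clique of $B'$, each new vertex adjacent exactly to its assigned clique. Since each new vertex has a clique neighborhood, attaching them preserves chordality, so $B$ is chordal; moreover, $A$ embeds into $B$ by taking any embedding of $A'$ into $B'$ and mapping $v$ to one of the attached simplicials at $f(C)$.

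Given a $2$-coloring $c$ of $B$, I would define an auxiliary $k$-coloring $c^*$ of $B'$ that encodes both $c(b)$ and information about the colors of the simplicial vertices attached to $|C|$-cliques associated with $b$. Applying the inductive hypothesis to $c^*$ yields a $c^*$-monochromatic copy $f: A' \to B'$, and the refined information is designed to guarantee an attached simplicial to $f(C)$ of the same color as $f(A')$ under $c$, so that $f$ extends to a $c$-monochromatic copy of $A$ in $B$.

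The main obstacle is designing the auxiliary coloring $c^*$ and choosing the parameters $k$ and $M$ correctly: a vertex of $B'$ typically lies in many $|C|$-cliques, so distilling a single per-vertex color that faithfully records the relevant simplicial information is delicate, and a Ramsey-style counting argument is needed to balance $k$, $M$, and the structure of the $|C|$-cliques in $B'$. An alternative route would be the direct construction $B = A[K_N]$, which is chordal (by an argument analogous to Lemma~\ref{Lemma_ForbidPathCycle}: any induced $C_4$ in $A[K_N]$ would require two non-adjacent same-fiber vertices, impossible since $K_N$ is complete), and then argue by Ramsey on each fiber; but this approach leaves a genuine failure mode in which some fibers are monochromatic in color $1$ and others in color $2$, and that mode cannot be resolved by the lex product alone---it must be supplemented with additional chordal structure (such as attached simplicials, as in the inductive construction above) to block all bad colorings.
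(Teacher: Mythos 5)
Your overall strategy---induction along a simplicial elimination ordering of $A$, building $B$ from an inductive witness $B'$ for $A' = A - v$ by attaching new material to every $|C|$-clique of $B'$, with chordality preserved by an attachment lemma---is exactly the paper's. But the core of the argument is missing: you attach $M$ isolated simplicial vertices per clique and then defer the real work to an unspecified auxiliary $k$-coloring $c^*$ of $B'$ plus a ``Ramsey-style counting argument.'' That is a genuine gap, not a routine verification, and the obstruction is the one you half-identify yourself: the relevant data (the colors of the simplicials attached to a given $|C|$-clique) lives on cliques, not on vertices, so it cannot be faithfully packaged into a vertex coloring of $B'$ to which the inductive hypothesis applies. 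Concretely, the adversary may color every attached simplicial vertex with color $2$; then you need a color-$2$ monochromatic copy of $A'$ in $B'$ whose image of $C$ is a $|C|$-clique of $B'$, but the inductive hypothesis only guarantees a monochromatic copy of \emph{some} color, and no choice of $M$ or $k$ in your setup forces the right one. Your fallback $B = A[K_N]$ fails for the reason you state.

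The missing idea is to attach, to each $|C|$-clique of $B'$, not a batch of isolated simplicial vertices but an entire copy of $A$, fully joined to that clique; this is still chordal by Lemma~\ref{Lemma_AttachingGraphs} applied repeatedly. Then only the $2$-coloring inductive hypothesis is needed and the dichotomy closes immediately: given a $2$-coloring $c$ of $B$, take a monochromatic copy $f$ of $A'$ in $B'$, say of color $1$, and consider the copy of $A$ attached to the clique $f(C)$. If every vertex of that attached copy has color $2$, it is itself a monochromatic copy of $A$; otherwise some vertex $(i,a)$ of it has color $1$, and $f(A') \cup \{ (i,a) \}$ induces a copy of $A$ (since $(i,a)$ is adjacent in $B'$ to exactly the vertices of $f(C)$) that is monochromatic of color $1$. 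This eliminates the auxiliary coloring, the parameter $M$, and all counting.
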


We employ a characterization of chordal in terms of a ``simplicial elimination ordering.''

\begin{theorem}[Theorem 5.3.17 of \cite{West}]\label{Theorem_ChordalClassify}
	A finite graph $G$ is chordal if and only if there is an ordering of the vertices of $G$, $\{ a_1, a_2, \dots, a_n \}$, such that, for all $i \in [n]$, the neighbors of $a_i$ in the graph induced by $\{ a_1, \dots, a_i \}$ form a complete graph.
\end{theorem}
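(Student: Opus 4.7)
The plan is to prove each direction of the biconditional separately, with the nontrivial direction reducing, via induction, to the existence of a \emph{simplicial vertex} in every nonempty chordal graph --- that is, a vertex whose neighborhood is a clique.

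For the easy direction, suppose $G$ admits such an ordering $a_1, \dots, a_n$, and toward a contradiction assume $G$ has an induced cycle on vertices $\{a_{i_1}, \dots, a_{i_k}\}$ with $k \geq 4$. Let $a_{i_j}$ be the cycle vertex with the largest index. Both of its cycle-neighbors lie in $\{a_1, \dots, a_{i_j - 1}\}$, hence by hypothesis form an edge in $G$; but in an induced cycle of length at least $4$, two neighbors of a common cycle-vertex are non-adjacent, a contradiction.

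For the hard direction, I would induct on $|G|$ granting the simplicial vertex lemma: pick a simplicial vertex $a_n$ of $G$, apply induction to the chordal graph $G \setminus \{a_n\}$ to obtain an ordering $a_1, \dots, a_{n-1}$ with the stated property, and append $a_n$. The property for indices $i < n$ is inherited from the inductive ordering, and for $i = n$ it holds because the neighbors of $a_n$ in all of $G$ form a clique by simpliciality.

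The main obstacle is the simplicial vertex lemma, which I would establish in the standard strengthened form: any chordal graph that is not a clique contains two non-adjacent simplicial vertices (the lemma itself then follows, since in a clique every vertex is simplicial). Induct on $|G|$, the base cases being immediate. Given $G$ not a clique, fix non-adjacent $u, v \in G$ and let $S$ be a minimal $u$-$v$ separator, with $A$ the component of $G \setminus S$ containing $u$ and $B$ the component containing $v$. The crux is to show $S$ is a clique: if $x, y \in S$ were non-adjacent, minimality of $S$ guarantees shortest $x$-$y$ paths through $A$ and through $B$ (with no other internal vertices in $S$), and their concatenation is an induced cycle of length at least $4$, contradicting chordality. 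Then apply induction to the strictly smaller chordal graph $G[A \cup S]$: either it is a clique, in which case any vertex of $A$ is simplicial in $G$ (its $G$-neighborhood equals its $G[A \cup S]$-neighborhood by the separator property), or induction gives two non-adjacent simplicial vertices of $G[A \cup S]$, at most one of which lies in the clique $S$, so at least one lies in $A$ and is simplicial in $G$ for the same separator reason. The symmetric argument produces a simplicial vertex in $B$, and the two are non-adjacent in $G$ since $S$ separates them, completing the lemma and hence the theorem.
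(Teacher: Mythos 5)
Your proposal is correct. Note that the paper does not prove this statement at all: it is quoted as Theorem 5.3.17 of West's \emph{Introduction to Graph Theory} and used as a black box in the proof of Proposition \ref{Proposition_Chordal}, so there is no in-paper argument to compare against. What you have written is the standard proof (essentially Dirac's): the easy direction by taking the highest-indexed vertex of a putative induced long cycle, and the hard direction by reducing to the lemma that every non-complete chordal graph has two non-adjacent simplicial vertices, proved via the fact that a minimal separator in a chordal graph is a clique. All the delicate points are handled correctly --- the minimality of the separator is what guarantees that each of $x,y$ has neighbors in both components, the shortest paths through $A$ and through $B$ concatenate to a chordless cycle of length at least $4$ because $x$ and $y$ are non-adjacent and there are no edges between distinct components of $G \setminus S$, and the ``at most one of the two non-adjacent simplicial vertices lies in the clique $S$'' step is exactly why the strengthened (two-vertex) form of the lemma is needed for the induction to close. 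The only cosmetic remark is that you should say explicitly that induced subgraphs of chordal graphs are chordal (so that $G \setminus \{a_n\}$ and $G[A \cup S]$ are legitimate inputs to the induction), but that is immediate from the forbidden-subgraph definition used in Definition \ref{Definition_Chordal}.
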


In other words, $G$ can be constructed by adding one vertex at a time, ensuring that each new vertex is adjacent to a clique.

Next, consider the following lemma about creating a new chordal graph from two existing chordal graphs by making the entirety of one graph adjacent to a clique in the other.

\begin{lemma}\label{Lemma_AttachingGraphs}
	If $A$ and $B$ are finite chordal graphs and $C$ is a clique in $A$, then the following graph $G$ is chordal: $G$ has vertex set $A \sqcup B$, $E^G \cap A^2 = E^A$, $E^G \cap B^2 = E^B$, and, for each $a \in A$ and $b \in B$, $(a,b), (b,a) \in E^G$ if and only if $a \in C$.
\end{lemma}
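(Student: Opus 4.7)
The plan is to apply the simplicial elimination ordering characterization of chordal graphs from Theorem \ref{Theorem_ChordalClassify}: it suffices to exhibit an enumeration of the vertices of $G$ so that, for every vertex, its neighbors among the vertices preceding it (together with itself) form a complete graph.

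First, since $A$ and $B$ are chordal, fix simplicial elimination orderings $a_1,\dots,a_n$ of $A$ and $b_1,\dots,b_m$ of $B$ given by Theorem \ref{Theorem_ChordalClassify}. I propose the concatenated ordering
\[
    a_1,\dots,a_n,\,b_1,\dots,b_m
\]
on $G$, and claim this is a simplicial elimination ordering of $G$.

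Next, verify the condition vertex by vertex. For each $a_i$, the set of predecessors lies entirely in $A$, and by the hypothesis $E^G \cap A^2 = E^A$ the $G$-neighbors of $a_i$ among $\{a_1,\dots,a_i\}$ coincide with its $A$-neighbors among $\{a_1,\dots,a_i\}$, which form a clique by the choice of the ordering of $A$. For each $b_j$, the set of predecessors is $A \cup \{b_1,\dots,b_j\}$. By the construction of $G$, the $G$-neighbors of $b_j$ in $A$ are exactly the vertices of $C$, and $E^G \cap B^2 = E^B$ ensures that the $G$-neighbors of $b_j$ in $\{b_1,\dots,b_j\}$ coincide with its $B$-neighbors there; call this set $N$. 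Thus the predecessor-neighborhood of $b_j$ in $G$ is $C \cup N$.

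Finally, check $C \cup N$ is a clique. The set $C$ is a clique by hypothesis, and $N$ is a clique by the choice of the ordering of $B$. Any edge between $C$ and $N$ is present in $G$ because every vertex of $C$ is adjacent in $G$ to every vertex of $B$ (by the construction of $G$), in particular to every vertex of $N$. Hence $C \cup N$ induces a complete graph, completing the verification. The proof is essentially a definition-chase once the correct ordering is identified; the only mild subtlety, and arguably the ``main step,'' is noticing that the bipartite edges between $C$ and $N$ are automatically present, so no interaction between the orderings of $A$ and $B$ is needed.
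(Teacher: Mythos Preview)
Your proof is correct but takes a different route from the paper's. The paper argues directly from the forbidden-subgraph definition of chordal: assuming $G$ contains an induced $C_n$ with $n\ge 4$, it observes that such a cycle cannot lie entirely in $A$ or in $B$, then locates an edge $(a,b)$ crossing from $A$ to $B$, notes $a\in C$ is adjacent to all of $H\cap B$, and derives a contradiction by finding either a vertex of degree $\ge 3$ in $H$ or a triangle in $H$. Your approach instead exploits Theorem~\ref{Theorem_ChordalClassify} constructively, concatenating simplicial elimination orderings of $A$ and $B$. What your argument buys is the elimination of case analysis: once one sees that the predecessor-neighborhood of each $b_j$ is $C\cup N$ with complete bipartite edges between the two cliques, the result is immediate. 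What the paper's argument buys is a proof that stays closer to the definition and does not depend on the elimination-ordering characterization (though the paper states that theorem anyway). Both proofs are short; yours is arguably the cleaner of the two.
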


\begin{proof}
	Suppose, towards a contradiction, $G$ contains $H \cong C_n$ as an induced subgraph for some $n \ge 4$.  Since $A$ and $B$ are chordal, $H \not\subseteq A$ and $H \not\subseteq B$.  Since $H$ is connected, there exists $a \in H \cap A$ and $b \in H \cap B$ that are adjacent.  Then, $a \in C$ and $a$ is adjacent to every vertex in $B$.  If $|H \cap B| > 2$, then the degree of $a$ in $H$ is at least $3$, contrary to $H \cong C_n$.  Therefore, $|H \cap B| \leq 2$.  Since $|H| \geq 4$, we have $|H \cap A| \geq 2$.  Since $H \setminus \{ a \} \cong P_{n-1}$, it is still connected, hence there exists $a' \in (H \setminus \{ a \}) \cap A$ adjacent to $b$.  However, this means $a' \in C$, hence $a'$ is adjacent to $a$.  Then $\{ a, a', b \}$ form a clique in $H$, a contradiction.
\end{proof}

\begin{center}
	\begin{tikzpicture}
		\draw[very thick] (0,0) circle (1);
		\draw (-1,0) node[anchor = east] {$A$};
		\filldraw[gray!25!white] (0.25,0.25) circle (0.5);
		\draw[very thick, gray] (0.25,0.75) -- (4,1);
		\draw[very thick, gray] (0.25,-0.25) -- (4,-1);
		\draw[thick] (0.25,0.25) node {$C$} circle (0.5);
		\draw[very thick] (4,0) circle (1);
		\draw (5,0) node[anchor = west] {$B$};
	\end{tikzpicture}
\end{center}

We use Theorem \ref{Theorem_ChordalClassify} and Lemma \ref{Lemma_AttachingGraphs} to prove Proposition \ref{Proposition_Chordal}.
%% NEW STUFF %%
Note that this proof is a variant of an argument from folklore.  For example, it is similar to the proof that the Rado graph is indivisible; see \cite{EZS89, Fr2000}.
%% END NEW STUFF %%

\begin{proof}[Proof of Proposition \ref{Proposition_Chordal}]
	By induction on the construction of $A$ as in Theorem \ref{Theorem_ChordalClassify}.  If $A = \emptyset$, then let $B = \emptyset$.  If $A$ is a single vertex, let $B$ be a single vertex.
	
	Suppose now that $A$ is constructed by creating a new vertex $v$ and making it adjacent to a clique $C$ in some chordal graph $A'$ with $|A'| \geq 1$.  By induction, there exists a chordal graph $B'$ such that, for all $2$-colorings $c'$ of $B'$, $B'$ has a monochromatic copy of $A'$ with respect to $c'$.
	
	Let $k = |C|$ and let $C_1, \dots, C_m$ enumerate all of the cliques in $B'$ of size $k$ (distinct, but not necessarily disjoint).  Then, form a new graph $B$ with vertex set
	\[
	B' \sqcup \{ (i, a) : i \in [m], a \in A \}.
	\]
	The edge set of $B$ will be given by:
	\begin{itemize}
		\item $E^B \cap (B')^2 = E^{B'}$;
		\item $((i,a), (i',a')) \in E^B$ if and only if $i = i'$ and $(a, a') \in A$; and
		\item $((i, a), b) \in E^B$ and $(b, (i,a)) \in E^B$ if and only if $b \in C_i$.
	\end{itemize}
	In other words, $B$ is $B'$ together with, for each $k$-clique $C'$ in $B'$, a copy of $A$ all of whose vertices are adjancent to each vertex of $C'$.  By Lemma \ref{Lemma_AttachingGraphs} and induction, $B$ is chordal.  (Note that, if $k = 0$, then there is exactly one clique of $B'$ of size $0$, namely $C_1 = \emptyset$.  Therefore, $B$ is a disjoint union of $B'$ and $A$.)  We show that this $B$ works.
	
	Let $c$ be a $2$-coloring of $B$ (which restricts to a $2$-coloring of $B'$).  By assumption, there exists a monochromatic copy $f$ of $A'$ in $B'$ with respect to $c$; without loss of generality, suppose that $c(f(A')) = \{ 1 \}$ (i.e., this copy is colored $1$).  Then, $f(C) = C_i$ for some $i$.  If $c((i,a)) = 2$ for all $a \in A$, then this would be a monochromatic copy of $A$ in $B$ with respect to $c$.  Otherwise, $c((i,a)) = 1$ for some $a \in A$, in which case $f(A') \cup \{ (i,a) \}$ is a monochromatic copy of $A$ in $B$ with respect to $c$.
\end{proof}

Not all graph classes that are characterized by forbidding induced subgraphs are indivisible, not even all subclasses of perfect graphs.  For example, we look at ``threshold graphs'' (e.g., \cite{CH}).

\begin{definition}\label{Definition_Threshold}
	A \emph{threshold graph} is a graph that is recursively defined as follows:
	\begin{itemize}
		\item The empty graph is a threshold graph;
		\item if $G$ is a threshold graph, then the addition of a single new isolated vertex to $G$ is a threshold graph; and
		\item if $G$ is a threshold graph, then the addition of a single new vertex adjacent to every vertex in $G$ is a threshold graph.
	\end{itemize}
\end{definition}

\begin{theorem}[Theorem 1 of \cite{CH}]\label{Theorem_ThresholdClassify}
	A finite graph $G$ is a threshold graph if and only if $G$ forbids $P_4$, $C_4$, and $\overline{C_4}$ as induced subgraphs.
\end{theorem}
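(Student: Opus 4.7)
The plan is to prove both directions by induction: the forward direction by induction on the recursive construction of threshold graphs, and the converse by induction on $|G|$, where the key step is producing either an isolated or a universal vertex to peel off.

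For the forward direction, suppose $G$ is threshold, constructed from a threshold graph $G'$ by adding a vertex $v$ which is either isolated or universal, and suppose toward a contradiction that $G$ contains some $H \in \{P_4, C_4, \overline{C_4}\}$ as an induced subgraph. If $v \notin H$, then $H \subseteq G'$, contradicting the inductive hypothesis. Otherwise $v \in H$; but every vertex of $P_4$, $C_4$, and $\overline{C_4}$ has degree $1$ or $2$ within these four-vertex graphs, whereas $\deg_H(v) \in \{0,3\}$ depending on whether $v$ is isolated or universal in $G$. This contradiction closes the forward direction.

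For the converse, it suffices to show that every nonempty $\{P_4, C_4, \overline{C_4}\}$-free graph $G$ has an isolated or a universal vertex $v$; by hereditarity $G \setminus \{v\}$ is still $\{P_4, C_4, \overline{C_4}\}$-free, hence threshold by induction, and $G$ is obtained by the appropriate construction step. Assuming $G$ has no isolated vertex, I would first observe that $G$ must be connected, since otherwise each component contains an edge and two edges drawn from distinct components induce $\overline{C_4}$. Let $v$ be a vertex of maximum degree; the claim is that $v$ is universal. If $v$ had some non-neighbor $u$, take a shortest $v$--$u$ path $v = x_0, x_1, \dots, x_k = u$. If $k \geq 3$, then $\{x_0, x_1, x_2, x_3\}$ is an induced $P_4$ since a shortest path has no chords. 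Otherwise $k=2$, giving $v \sim x_1 \sim u$ with $v \not\sim u$; the plan is to produce a fourth vertex $w \neq x_1$ with $w \sim v$ and $w \not\sim x_1$, for which $\{v, x_1, u, w\}$ induces either $P_4$ (when $w \not\sim u$) or $C_4$ (when $w \sim u$), a contradiction either way.

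The main obstacle is extracting this vertex $w$, and for this I would use a short counting comparison: writing $N(v) = \{x_1\} \sqcup A \sqcup C$ and $N(x_1) = \{v\} \sqcup B \sqcup C$ with $C = N(v) \cap N(x_1)$, the maximum-degree choice of $v$ gives $|A| \geq |B|$, and the fact that $u \in B$ forces $|A| \geq 1$, producing a suitable $w \in A$. I expect this counting step to be the crux; everything else is essentially forced by the shortest-path structure together with the low-degree pattern shared by the three forbidden graphs.
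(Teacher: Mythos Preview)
The paper does not supply its own proof of this theorem; it merely quotes the result as Theorem~1 of \cite{CH} and uses it as a black box. So there is no in-paper argument to compare against.

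Your proposal is correct and essentially the standard proof. The forward direction is clean: in each of $P_4$, $C_4$, $\overline{C_4}$ every vertex has degree $1$ or $2$, so the newly added isolated or universal vertex (degree $0$ or $3$ in any four-vertex induced subgraph containing it) cannot participate. For the converse, your counting step is exactly right: with $v$ of maximum degree and $u$ a non-neighbor at distance two via $x_1$, the inequality $|A|\ge|B|\ge 1$ produces the required $w\in N(v)\setminus N(x_1)$, and the four vertices $\{v,x_1,u,w\}$ then induce $P_4$ or $C_4$ according to whether $w\not\sim u$ or $w\sim u$. The distance-$\ge 3$ case and the connectedness argument via $\overline{C_4}=2K_2$ are also fine. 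Nothing is missing.
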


As we alluded to earlier, it turns out that the class of threshold graphs is not indivisible.

\begin{proposition}\label{Proposition_Threshold}
	The class of all finite threshold graphs is not indivisible.
\end{proposition}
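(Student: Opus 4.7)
The plan is to take $A = P_3$, which is itself a threshold graph (built by adding two isolated vertices and then a universal vertex). By Theorem \ref{Theorem_TwoColors}, it suffices to exhibit, for every threshold graph $B$, a $2$-coloring of $B$ under which $B$ has no monochromatic copy of $P_3$.

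The key observation is that every threshold graph $B$ is a split graph: its vertex set partitions into a clique $K$ and an independent set $I$. This follows by induction on the construction in Definition \ref{Definition_Threshold}, with the universal-type vertices forming $K$ and the isolated-type vertices forming $I$ (any two universal-type vertices become adjacent at the step at which the later one is added, while any two isolated-type vertices remain non-adjacent at the step at which the later one is added). Using this partition, define the $2$-coloring $c$ of $B$ by $c(v) = 1$ for $v \in K$ and $c(v) = 2$ for $v \in I$.

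Finally, suppose toward a contradiction that $f : P_3 \rightarrow B$ is a monochromatic copy of $P_3$ under $c$. If $f(P_3) \subseteq K$, its image induces a clique in $B$, which cannot contain an induced $P_3$ since $P_3$ has a non-edge; and if $f(P_3) \subseteq I$, its image induces an independent set in $B$, which cannot contain an induced $P_3$ since $P_3$ has an edge. Either case is impossible, so $B$ has no monochromatic copy of $P_3$ under $c$, and the class of threshold graphs is not indivisible. No real obstacle is anticipated here; the entire argument hinges on the split decomposition of threshold graphs, which is immediate from the recursive construction in Definition \ref{Definition_Threshold}.
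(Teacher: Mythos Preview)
Your proof is correct and essentially the same as the paper's: both color the ``universal-type'' vertices one color and the ``isolated-type'' vertices the other, so that the two color classes are a clique and an independent set, and then observe that the chosen target graph (you use $P_3$, the paper uses $\overline{P_3}$) has both an edge and a non-edge and hence cannot embed in either. The only cosmetic difference is that you phrase the coloring via the split decomposition of threshold graphs (which is exactly the argument the paper uses separately for Proposition~\ref{Proposition_Split}), whereas the paper defines the same coloring recursively from Definition~\ref{Definition_Threshold}.
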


\begin{proof}
	Let $A = \overline{P_3}$, which is clearly a threshold graph.
	\begin{center}
		\begin{tikzpicture}
			\draw[very thick, gray] (0,0) -- (1,0);
			\foreach \x in {0,1,2}
			\filldraw (\x,0) circle (0.075);
			\draw (-0.5,0) node {$A$};
		\end{tikzpicture}
	\end{center}
	For each non-empty threshold graph $B$, we recursively define a $2$-coloring $c$ of $B$ using the construction given in Definition \ref{Definition_Threshold} as follows:
	\begin{itemize}
		\item If $B$ is constructed from a threshold graph $B'$ by adding an additional isolated vertex $v$ and $c'$ is the $2$-coloring on $B'$, define $c : B \rightarrow [2]$ extending $c'$ given by setting $c(v) = 1$.
		\item If $B$ is constructed from a threshold graph $B'$ by adding an additional vertex $v$ adjacent to all vertices in $B'$ and $c'$ is the $2$-coloring of $B'$, define $c: B \rightarrow [2]$ extending $c'$ by setting $c(v) = 2$.
	\end{itemize}
	From the above construction, it is clear that, for any threshold graph $B$, no two vertices of $B$ that are colored $1$ can be adjacent.  Therefore, $c^{-1}(\{ 1 \})$ does not contain a copy of $A$.  On the other hand, any two vertices of $B$ that are colored $2$ are adjacent.  Therefore, $c^{-1}(\{ 2 \})$ does not contain a copy of $A$.  Hence, there is no monochromatic copy of $A$ in $B$ with respect to $c$.
\end{proof}

Next, consider ``split graphs'' (e.g., Exercise 8.1.17 of \cite{West}).

\begin{definition}\label{Definition_Split}
	A finite graph $G$ is called a \emph{split graph} if its vertices can be partitioned into two sets, $A$ and $B$, where $A$ is a clique and $B$ is an independent set.
\end{definition}

\begin{theorem}[\cite{HS}]\label{Theorem_SplitClassify}
	A finite graph $G$ is a split graph if and only if $G$ forbids $C_4$, $C_5$, and $\overline{C_4}$ as induced subgraphs.
\end{theorem}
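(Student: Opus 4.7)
The plan is to prove both directions of the biconditional separately, with the forward direction being essentially routine and the reverse direction requiring a case analysis on a maximum independent set.

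For the forward direction, I would first observe that the class of split graphs is closed under induced subgraphs: if $G$ has a partition into a clique $A$ and an independent set $B$, then any induced subgraph inherits the partition by restriction. It therefore suffices to verify that none of $C_4$, $C_5$, or $\overline{C_4}$ is itself a split graph. For each this is a direct check: in $C_4$, every clique and every independent set has size at most $2$, and no choice of such a pair partitions the vertex set (each adjacent pair leaves an adjacent pair); in $C_5$, the maximum clique and the maximum independent set both have size $2$, so no partition can cover all $5$ vertices; and in $\overline{C_4}$, which is two disjoint edges, any clique of size $2$ leaves a remaining edge rather than an independent set.

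For the reverse direction, let $G$ be a graph with no induced $C_4$, $C_5$, or $\overline{C_4}$. Take a maximum independent set $I$ of $G$ and let $K = G \setminus I$ be the set of remaining vertices; it suffices to show that $K$ is a clique. Suppose toward a contradiction that there exist $x, y \in K$ with $(x,y) \notin E^G$. By the maximality of $I$, both $x$ and $y$ have neighbors $u, v \in I$ respectively (possibly $u = v$). I would then perform a case analysis on the four-vertex induced subgraph on $\{x, u, y, v\}$ based on which of the cross-edges $(x,v)$ and $(y,u)$ are present: one configuration directly yields an induced $C_4$, another yields an induced $\overline{C_4}$, and any remaining $P_4$-like configuration must be completed to an induced $C_5$ using a fifth vertex produced from $I$ or $K$ via the maximality of $I$.

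The main obstacle will be the $P_4$ subcase in the reverse direction: when the four-vertex analysis yields an induced $P_4$ instead of $C_4$ or $\overline{C_4}$, closing it to an induced $C_5$ requires carefully selecting a fifth vertex whose adjacencies to the path do not introduce any chord. This is arranged by exchanging vertices with $I$ via its maximality and by invoking the absence of the already-excluded four-vertex obstructions to rule out the unwanted chords. Keeping the case analysis organized so that every configuration lands in exactly one of the three forbidden-subgraph buckets is the most delicate bookkeeping in the argument.
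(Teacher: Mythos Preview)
The paper does not prove this theorem; it is quoted from \cite{HS} and used as a black box in the next proposition. So there is no ``paper's own proof'' to compare against, and I will simply assess your sketch on its merits.

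Your forward direction is fine. The reverse direction, however, has a genuine gap: it is \emph{not} true that for every maximum independent set $I$ the complement $K=G\setminus I$ is a clique, so the sentence ``it suffices to show that $K$ is a clique'' commits you to something false. Take $G=P_4$ with vertices $a\text{--}b\text{--}c\text{--}d$. This graph is $\{C_4,C_5,\overline{C_4}\}$-free and is split (partition $\{b,c\}\cup\{a,d\}$), yet the maximum independent set $I=\{a,c\}$ gives $K=\{b,d\}$, which is not a clique. Running your case analysis here lands squarely in the $P_4$ subcase, and there is no fifth vertex available at all, so the promised completion to an induced $C_5$ cannot happen. The phrase ``exchanging vertices with $I$ via its maximality'' does not rescue this: maximality of $I$ gives you nothing beyond what you have already used, since $\{a,c\}$ really is maximum.

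The standard fix is to strengthen the extremal choice: take $I$ to be a maximum independent set that, among all such, \emph{minimizes the number of non-edges in} $G\setminus I$ (equivalently, work dually with a maximum clique $K$ minimizing the number of edges outside $K$). With that secondary condition in hand, the $P_4$ subcase is resolved not by manufacturing a $C_5$ but by performing a swap that produces another maximum independent set whose complement has strictly fewer non-edges, contradicting the choice of $I$. Alternatively, one can observe that a $\{C_4,C_5,\overline{C_4}\}$-free graph is automatically chordal (any $C_n$ with $n\ge 6$ contains an induced $\overline{C_4}=2K_2$) and run an induction via a simplicial vertex. Either route works; the bare ``complete to $C_5$'' plan does not.
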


It turns out that the class of split graphs is also not indivisible.

\begin{proposition}\label{Proposition_Split}
	The class of all finite split graphs is not indivisible.
\end{proposition}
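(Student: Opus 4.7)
The plan is to exhibit a specific split graph $A$ and, for every split graph $B$, describe an explicit $2$-coloring of $B$ that admits no monochromatic copy of $A$. I will take $A = P_3$, which is clearly a split graph (partition its three vertices into the clique consisting of the two endpoints of one edge and the independent set consisting of the remaining vertex); notice that $A$ is neither a clique nor an independent set, since it has both an edge and a non-edge.

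Next, for an arbitrary split graph $B$ I will use the defining property from Definition \ref{Definition_Split}: fix a partition of the vertex set of $B$ into a clique $C \subseteq B$ and an independent set $I \subseteq B$. Define $c : B \rightarrow [2]$ by setting $c(v) = 1$ for $v \in C$ and $c(v) = 2$ for $v \in I$. Then $c^{-1}(\{1\}) \subseteq C$ induces a complete subgraph of $B$, and $c^{-1}(\{2\}) \subseteq I$ induces an independent subgraph of $B$.

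To conclude, I will observe that any induced subgraph of a clique is itself a clique, and any induced subgraph of an independent set is itself an independent set. Since $P_3$ contains both an edge and a non-adjacent pair of vertices, it embeds into neither a clique nor an independent set. Therefore $B$ has no monochromatic copy of $A$ with respect to $c$. Since $B$ was arbitrary, this shows $A$ witnesses the failure of indivisibility.

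I do not anticipate a genuine obstacle: the only mild subtlety is that the split partition of $B$ is not unique, but this is immaterial because any choice of such partition yields a valid coloring. The argument is essentially the same structural idea as in Proposition \ref{Proposition_Threshold}, only leveraging Definition \ref{Definition_Split} directly in place of the recursive construction of threshold graphs.
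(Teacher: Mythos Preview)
Your proposal is correct and essentially identical to the paper's proof: the paper also takes $A = P_3$, partitions an arbitrary split graph $B$ into a clique and an independent set, colors each part with a distinct color, and observes that $P_3$ embeds into neither a complete nor a null graph. Your added remarks (verifying $P_3$ is split, noting the non-uniqueness of the partition is harmless) are fine elaborations but do not change the argument.
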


\begin{proof}
	Let $A = P_3$, which is clearly a split graph.  Fix a split graph $B$.  Suppose that $B = C \sqcup D$, where $C$ is a clique and $D$ is a null graph.  Then, let $c : B \rightarrow [2]$ be given by
	\[
	c(b) = \begin{cases} 1 & \text{ if } b \in C, \\ 2 & \text{ if } b \in D \end{cases}.
	\]
	Then there exists no monochromatic copy of $A$ in $B$ with respect to $c$, as $A$ cannot be embedded in either a complete graph or a null graph.
\end{proof}

Another subclass of the perfect graphs with a more complicated forbidden induced subgraph characterization is the class of ``distance-hereditary graphs'' \cite{How}.

\begin{definition}[\cite{How}]\label{Definition_Distance}
	A finite graph $G$ is a \emph{distance-hereditary graph} if, for all connected induced subgraphs $H$ of $G$, for all $a, b \in H$, the distance between $a$ and $b$ in $H$ is equal to the distance between $a$ and $b$ in $G$.
\end{definition}

\begin{theorem}[\cite{How}]\label{Theorem_DistanceForbid}
	A finite graph $G$ is a distance-hereditary graph if and only if $G$ forbids $\overline{P_5}$, $C_n$ for all $n \geq 5$, and the following two graphs:
	\begin{center}
		\begin{tikzpicture}
			\draw[very thick, gray] (0,0) rectangle (2,1);
			\draw[very thick, gray] (1,0) -- (1,1);
			\foreach \x in {(0,0),(1,0),(2,0),(0,1),(1,1),(2,1)}
			\filldraw \x circle (0.075);
			\draw[very thick, gray] (5.5,0) -- (4,0.75) -- (5,1) -- (6,1) -- (7,0.75) -- cycle;
			\draw[very thick, gray] (6,1) -- (5.5,0) -- (5,1);
			\foreach \x in {(4,0.75),(5,1),(6,1),(7,0.75),(5.5,0)}
			\filldraw \x circle (0.075);
		\end{tikzpicture}
	\end{center}
\end{theorem}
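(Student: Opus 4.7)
The plan is to prove the biconditional in two directions. For the forward direction, for each listed graph $F$ I exhibit a connected induced subgraph $H \subseteq F$ and vertices $u, v \in H$ with $d_H(u,v) > d_F(u,v)$. For $C_n$ with $n \geq 5$, I pick two vertices at cyclic distance $2$ and delete the intermediate vertex; in the resulting $P_{n-1}$ they sit at distance $n - 2 \geq 3$. For $\overline{P_5}$ (the house), for the gem (the second pictured graph: a $P_4$ with a fifth vertex adjacent to all four), and for the domino (two $C_4$'s sharing an edge), analogous one-vertex deletions expose distance-increasing pairs. This direction is straightforward once the four forbidden graphs are drawn.

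For the converse, I argue by minimal counterexample. Suppose $G$ is not distance-hereditary, and choose a connected induced subgraph $H \subseteq G$ and $u, v \in H$ with $d_H(u,v) > d_G(u,v)$, taking $(d_G(u,v), |V(H)|)$ lexicographically minimum. Let $P : u = x_0, x_1, \ldots, x_k = v$ be a geodesic in $G$ and $Q$ a geodesic in $H$. Minimality forces the interior of $P$ to lie outside $H$ and restricts the adjacencies between internal vertices of $P$ and internal vertices of $Q$, since any shortcut between two $x_i$'s or an unsuitable pattern of edges to $Q$ would contradict the choice of $(H, u, v)$.

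The core step is then a case analysis on $k$ and on the chord structure of $Q$. When $k \geq 3$, an appropriate portion of $P \cup Q$ must be an induced $C_n$ for some $n \geq 5$. When $k = 2$, the outside neighbor $x_1$ combines with a short $u$-$v$ path in $H$, plus whatever chords of $Q$ are present in $H$, to produce a house, a gem, or a domino, depending precisely on which of those chords exist. The main obstacle is exactly this local enumeration: the four forbidden families correspond to subtly different patterns of edges between $P$ and $Q$, and separating them cleanly requires careful bookkeeping of chords. A more streamlined alternative would route the proof through the Bandelt--Mulder construction of distance-hereditary graphs as those built iteratively from $K_1$ by adding pendant vertices and twins; then the forward direction becomes a mechanical check that no forbidden graph admits such a construction sequence, and the converse reduces to showing that any graph with neither a pendant vertex nor a pair of twins contains one of the listed obstructions. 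Either route ultimately reduces to a finite local case analysis, which is where the real work of Howorka's theorem lies.
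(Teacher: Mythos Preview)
The paper does not prove this theorem. It is stated with the citation \cite{How} and used as background; no argument for it appears anywhere in the text. So there is no ``paper's own proof'' to compare your proposal against.

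As for your proposal itself: what you have written is a plausible sketch of how Howorka's argument goes, but it is not a proof. The forward direction is fine in outline, though you should actually exhibit the distance-increasing deletions for the house, gem, and domino rather than assert that ``analogous'' ones exist. The converse is where the content lies, and you have essentially deferred all of it: the phrases ``minimality forces,'' ``restricts the adjacencies,'' and ``depending precisely on which of those chords exist'' are placeholders for the case analysis, not the case analysis. In particular, the claim that for $k \geq 3$ a portion of $P \cup Q$ is an \emph{induced} $C_n$ with $n \geq 5$ requires ruling out chords between $P$ and $Q$ and within $Q$, which does not follow automatically from lexicographic minimality and needs to be argued. Your closing paragraph candidly acknowledges this (``the real work of Howorka's theorem lies'' in the local enumeration), so you already see the gap; to turn the proposal into a proof you would need to actually carry out that enumeration, or else fully execute the Bandelt--Mulder alternative you mention.
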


However, we will make use of recursive characterization of the class of distance-hereditary graphs.

\begin{theorem}[Theorem 1 of \cite{BM}]\label{Theorem_DistanceClassify}
	A finite graph $G$ is a distance-hereditary graph if and only if $G$ can be recursively constructed, starting with a single vertex, by the following three operations:
	\begin{itemize}
		\item add a single vertex adjacent to exactly one vertex in $G$;
		\item replace any one vertex in $G$ with $N_2$; and
		\item replace any one vertex in $G$ with $K_2$.
	\end{itemize}
	(The second two operations are called ``splitting a vertex'' via ``false twins'' and ``true twins,'' respectively.)
\end{theorem}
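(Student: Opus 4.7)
The plan is to prove both directions of the equivalence separately, using induction. For the ``if'' direction, I would argue by induction on the length of the recursive construction that each operation preserves the distance-hereditary property. The base case (a single vertex) is trivial. For the inductive step, suppose $G$ is distance-hereditary and $G'$ arises from $G$ by one of the three operations. Any connected induced subgraph $H' \subseteq G'$ falls into one of two cases: either $H'$ omits every newly added vertex, in which case $H'$ is essentially a connected induced subgraph of $G$ and distances are inherited; or $H'$ contains some new vertices, and I would argue that their neighbors force distances in $H'$ to match those in $G'$. For a pendant addition, any shortest path involving the pendant must enter through its unique neighbor. For false and true twins, the key observation is that the two twin vertices share the same set of neighbors outside themselves, so a shortest path through one twin can be rerouted through the other without changing its length.

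For the ``only if'' direction, I would use strong induction on $|G|$, resting on the following structural lemma: every distance-hereditary graph on at least two vertices contains either a pendant vertex, a pair of false twins, or a pair of true twins. Given this lemma, I can ``undo'' the structure --- deleting a pendant or merging a twin pair --- to produce a smaller distance-hereditary graph (distance-heredity passes to induced subgraphs by definition, and merging twins inverts a twin-split). The inductive hypothesis yields a recursive construction of the smaller graph, to which the appropriate operation is appended to obtain a construction of $G$.

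To prove the structural lemma, my first approach would be a BFS argument: fix a vertex $r$, form distance layers $L_0 = \{r\}, L_1, \ldots, L_k$, and select a vertex $v \in L_k$ on the deepest layer. Since $G$ is distance-hereditary, examining the induced subgraph on $\{r, v\}$ together with two shortest $r$--$v$ paths forces the neighbors of $v$ in $L_{k-1}$ to be tightly constrained (otherwise one finds an induced path or cycle whose endpoint-distance changes between $G$ and the subgraph). I would then case on $|N(v) \cap L_{k-1}|$: when this set has a single element, $v$ is a pendant; otherwise, I would look for another vertex in $L_k$ with the same neighborhood, producing false or true twins according to the adjacency pattern within $L_k$.

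The main obstacle is the structural lemma, whose proof by direct BFS analysis requires a delicate case analysis. A cleaner alternative is to invoke the forbidden induced subgraph characterization (Theorem \ref{Theorem_DistanceForbid}) and argue contrapositively: if $G$ contains no pendant and no twin pair, then by examining a shortest cycle and related small configurations, one can locate a copy of $\overline{P_5}$, some $C_n$ with $n \geq 5$, or one of the two exceptional six-vertex obstructions, contradicting distance-heredity. Either route reduces the theorem to a finite structural check, but the BFS method gives a more self-contained inductive construction, while the forbidden-subgraph route trades structural insight for a cleaner appeal to known characterizations.
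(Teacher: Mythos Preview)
The paper does not prove this theorem. It is quoted as Theorem 1 of \cite{BM} (Bandelt and Mulder) and used as a black box to establish Proposition \ref{Proposition_Distance}; no argument for it appears in the paper itself. So there is no ``paper's own proof'' to compare against.

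That said, your outline is essentially the standard argument and matches the structure of the original proof in \cite{BM}. The ``if'' direction is routine. The real content is the structural lemma for the ``only if'' direction, and Bandelt and Mulder prove it by a BFS/layering argument close to what you sketch: they analyze the last distance layer from a fixed root and show it must contain a pendant or a pair of twins. Your alternative route via Theorem \ref{Theorem_DistanceForbid} is also viable but somewhat circular in spirit, since in \cite{BM} the forbidden-subgraph characterization and the one-vertex-extension characterization are developed together. One small point to watch in your BFS sketch: the claim that a deepest vertex with $|N(v)\cap L_{k-1}|=1$ is a pendant is not quite right, since $v$ may also have neighbors in $L_k$; the actual argument locates twins among vertices in $L_k$ sharing the same $L_{k-1}$-neighborhood, with the pendant case arising only when $v$ is isolated within $L_k$ and has a unique neighbor in $L_{k-1}$.
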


Using this construction will allow us to show that the class of all distance-hereditary graphs is not indivisible.

\begin{proposition}\label{Proposition_Distance}
	The class of all finite distance-hereditary graphs is not indivisible.
\end{proposition}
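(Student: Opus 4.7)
The plan is to take $A = P_4$, which is itself a distance-hereditary graph, and, for each distance-hereditary $B$, to build a $2$-coloring of $B$ whose color classes each induce a cograph. Since cographs forbid $P_4$, this immediately rules out a monochromatic copy of $A$, establishing that the class is not indivisible.

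I would define the coloring by induction along a recursive construction of $B$ guaranteed by Theorem \ref{Theorem_DistanceClassify}. The base vertex gets color $1$. When a new vertex $v$ is introduced, if $v$ is attached as a pendant to an existing vertex $u$, assign $v$ the opposite color of $u$; if $v$ is a true or false twin of some $u$, assign $v$ the same color as $u$. I then prove by induction on the construction that each color class induces a cograph. In the pendant case, $v$ enters the color class opposite to $u$ and has no neighbors in that class (its unique neighbor in $B$ is $u$), so that class merely gains an isolated vertex and remains a cograph. In the twin case, $v$ and $u$ share a color and have the same neighbors in $B$, hence the same neighbors within their common class; thus $v$ is a true or false twin of $u$ in the induced subgraph on that class, while the other class is untouched.

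The main technical step is the standard lemma that adding a true or false twin to a cograph yields a cograph. I would prove it by contradiction: any new induced $P_4$ would have to use the added twin $v$, and a short case analysis on the position of $v$ in the path lets one either replace $v$ by $u$ to obtain an induced $P_4$ already present in the original cograph, or derive an immediate contradiction from the shared-neighborhood property of $u$ and $v$ together with the true/false twin distinction (in particular, since $N(v)\setminus\{u\} = N(u)\setminus\{v\}$, any adjacency or non-adjacency forced on $v$ along the path transfers to $u$). This is the only place requiring genuine verification rather than bookkeeping. With the lemma in hand, the induction goes through and neither color class admits an induced $P_4$. Since $B$ was arbitrary, $A = P_4$ witnesses the failure of indivisibility for the class of all finite distance-hereditary graphs.
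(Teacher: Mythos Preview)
Your proof is correct and uses the same witness $A = P_4$ and the same recursive $2$-coloring as the paper: pendants get the opposite color of their neighbor, twins inherit the color of the vertex being split. The only difference is in how you verify that no monochromatic $P_4$ arises. You maintain the invariant that each color class induces a cograph, reducing to the lemma that cographs are closed under adding a true or false twin. The paper instead argues directly: it halts the construction at the first moment a monochromatic $P_4$ appears, observes this cannot be a pendant step, and then notes that after a twin step both new vertices must lie in the $P_4$ and share their outside neighborhood, forcing a $C_4$ subgraph. Your invariant is a cleaner statement and your lemma is standard; the paper's argument is more ad hoc but avoids the case analysis of the twin-closure lemma. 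Both ultimately rest on the same neighborhood-sharing observation for twins.
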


\begin{proof}
	Let $A = P_4$, which is a distance-hereditary graph.  For any distance-hereditary graph $B$, we use the construction in Theorem \ref{Theorem_DistanceClassify} to recursively define a $2$-coloring of $B$.  Suppose $B'$ is a distance-hereditary graph and $c'$ is the $2$-coloring of $B'$ we have constructed so far.  We define $c : B \rightarrow [2]$ extending $c'$ as follows: If $B$ is created by adding a vertex $v$ adjacent to a single vertex $w \in B'$, then define $c(v)$ to be different from $c'(w)$.  If $B$ is created by duplicating a vertex $w \in B'$ (by either replacing it with $K_2$ or $N_2$), then $c$ assigns both new vertices the original color, $c'(w)$.
	
	Toward a contradiction, suppose that there is a monochromatic copy of $A$ in $B$ with respect to $c$.  Halt the construction when the first monochromatic copy of $A$ in $B$ with respect to $c$ appears; let $f$ be that copy and let $H = f(A)$.  So $H \cong A = P_4$.  This construction will not halt when we add a single vertex adjacent to one other vertex, as those vertices will have different colors with respect to $c$.  Therefore, it must halt when we split a vertex; call these new vertices $a_1$ and $a_2$.  Clearly both $a_1, a_2 \in H$, as otherwise we would have had a monochromatic copy of $A$ earlier in the construction.  By construction $a_1$ and $a_2$ are adjacent to the same vertices in $B$.  Let $b_1, b_2$ be the other two vertices in $H$.  Since $H$ is connected, we must have that $a_1$ and $a_2$ are each adjacent to both $b_1$ and $b_2$.  Therefore, $H$ contains $C_4$ as a subgraph, contrary to the fact that $H \cong P_4$.
	
	Thus, there is no monochromatic copy of $A$ in $B$ with respect to $c$.  So the class of all finite distance-hereditary graphs is not indivisible.
\end{proof}

To conclude this section, we summarize all of the results in the following theorem and table.

\begin{theorem}\label{Theorem_GraphIndivisible}
	The following classes of finite graphs are indivisible:
	\begin{itemize}
		\item cographs;
		\item perfect graphs;
		\item chordal graphs.
	\end{itemize}
	The following classes of finite graphs are not indivisible:
	\begin{itemize}
		\item threshold graphs;
		\item split graphs;
		\item distance-hereditary graphs.
	\end{itemize}
\end{theorem}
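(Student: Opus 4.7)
The plan is simply to collect the results established throughout Section \ref{Section_Forbidden}, since each item in the theorem has already been addressed individually. I would organize the proof as two bullet lists, citing the appropriate prior result for each class.

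For the three indivisible classes, I would first dispatch cographs and perfect graphs together: both are of the form $\forbG(\mathcal{A})$ for some $\mathcal{A}$ consisting only of $P_n$'s for $n \ge 4$ and $C_n$'s (and their complements) for $n \ge 5$. For cographs, this is immediate from Definition \ref{Definition_Cograph}; for perfect graphs, one invokes the Strong Perfect Graph Theorem \ref{Theorem_PerfectClassify}. In either case, Proposition \ref{Proposition_ForbidF} applies, yielding indivisibility via Corollary \ref{Corollary_Cographs} and Corollary \ref{Corollary_PerfectGraphs}. The case of chordal graphs is the main obstacle on the positive side: Proposition \ref{Proposition_ForbidF} fails because $C_4$-free graphs are not closed under lexicographic product (e.g., $K_2[N_2] \cong C_4$). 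Here I would simply cite Proposition \ref{Proposition_Chordal}, whose proof proceeds by induction on the simplicial elimination ordering from Theorem \ref{Theorem_ChordalClassify}, using the ``attachment'' construction in Lemma \ref{Lemma_AttachingGraphs} to build, from a witness $B'$ for $A'$, a chordal witness $B$ for $A = A' \cup \{v\}$ by gluing a fresh copy of $A$ along each $k$-clique of $B'$, where $k = |C|$ is the size of the neighborhood clique for the new vertex.

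For the three non-indivisible classes, the common strategy is to exhibit a fixed $A$ and a family of $2$-colorings, one for each potential witness $B$, each of which avoids a monochromatic copy of $A$. For threshold graphs (Proposition \ref{Proposition_Threshold}) I would cite the coloring built recursively along the construction in Definition \ref{Definition_Threshold}: isolated additions get color $1$, universal additions get color $2$; then color class $1$ is an independent set and color class $2$ is a clique, neither of which contains $A = \overline{P_3}$. For split graphs (Proposition \ref{Proposition_Split}), the partition into a clique and an independent set supplied by Definition \ref{Definition_Split} directly gives a $2$-coloring avoiding a monochromatic $A = P_3$. For distance-hereditary graphs (Proposition \ref{Proposition_Distance}), I would cite the recursive coloring defined along the three operations of Theorem \ref{Theorem_DistanceClassify}: leaf-additions flip color, twin-splits inherit color; the key observation is that a minimal monochromatic copy of $A = P_4$ must appear at a twin-split, forcing a $C_4$ subgraph, contradicting $A \cong P_4$.

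Assembling these six already-proven statements finishes the theorem; the only genuine difficulty in the whole section is Proposition \ref{Proposition_Chordal}, where the lexicographic product technique of Theorem \ref{Theorem_Miriams} is unavailable and must be replaced by the clique-attachment inductive construction. Nothing further needs to be computed at this stage.

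\begin{proof}[Proof sketch of Theorem \ref{Theorem_GraphIndivisible}]
	Indivisibility of cographs, perfect graphs, and chordal graphs is established in Corollary \ref{Corollary_Cographs}, Corollary \ref{Corollary_PerfectGraphs}, and Proposition \ref{Proposition_Chordal}, respectively. Non-indivisibility of threshold graphs, split graphs, and distance-hereditary graphs is established in Proposition \ref{Proposition_Threshold}, Proposition \ref{Proposition_Split}, and Proposition \ref{Proposition_Distance}, respectively.
\end{proof}
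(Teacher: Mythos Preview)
Your proposal is correct and matches the paper's own proof essentially verbatim: the paper's proof of Theorem \ref{Theorem_GraphIndivisible} is likewise a one-line citation of Corollaries \ref{Corollary_Cographs} and \ref{Corollary_PerfectGraphs} and Propositions \ref{Proposition_Chordal}, \ref{Proposition_Threshold}, \ref{Proposition_Split}, and \ref{Proposition_Distance}. Your surrounding commentary accurately summarizes the content of those results as well.
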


\begin{proof}
	These are Corollaries \ref{Corollary_Cographs} and \ref{Corollary_PerfectGraphs} and Propositions \ref{Proposition_Chordal}, \ref{Proposition_Threshold}, \ref{Proposition_Split}, and \ref{Proposition_Distance}, respectively.
\end{proof}

\begin{center}
	\begin{tabular}{| l | l | l |}
		\hline \textbf{Class Name} & \textbf{Forbids} & \textbf{Indivisible} \\ \hline \hline
		Cographs & $P_4$ & Yes \\ \hline
		Perfect Graphs & $C_n$ and $\overline{C_n}$ for odd $n \geq 5$ & Yes \\ \hline
		Chordal Graphs & $C_n$ for all $n \geq 4$ & Yes \\ \hline
		Threshold Graphs & $P_4$, $C_4$, and $\overline{C_4}$ & No \\ \hline
		Split Graphs & $C_4$, $C_5$, and $\overline{C_4}$ & No \\ \hline
		Dist.-Her. Graphs & $\overline{P_5}$, $C_n$ for all $n \geq 5$, Domino, Gem & No \\ \hline
	\end{tabular}
\end{center}

Note that, although the class of distance-hereditary graphs is a subclass of the perfect graphs and a superclass of the cographs, both of which are indivisible, it is not.  Similarly, the class of cographs is indivisible despite being a subclass of the distance-hereditary graphs and a superclass of the threshold graphs, neither of which are indivisible.

%%%%%%%%%%%%%%%%%%%%%%%%%%%%%%%%%%%%%%%%
%% SECTION: The Amalgamation Property %%
%%%%%%%%%%%%%%%%%%%%%%%%%%%%%%%%%%%%%%%%

\section{The Amalgamation Property}\label{Section_Amalgamation}

Indivisibility was originally examined for structures instead of classes (see, for example, \cite{EZS91}) and one convenient way of showing that a class of structures is indivisible is by showing that its generic limit is indivisible.
However, in the absence of the amalgamation property, no such generic limit exists.  This leads to the question: Which graph classes have the amalgamation property?  It turns out that very few graph classes do.

Throughout this section, let $\K$ be a class of finite graphs in the usual language of graphs that is closed under isomorphism and has the hereditary property.

The amalgamation property for classes of graphs was studied by A.~H.~Lachlan and R.~Woodrow in \cite{LW}.  In that paper, they prove the following theorem.

\begin{theorem}[Theorem 2' of \cite{LW}]\label{Theorem_LachlanWoodrow}
	If $\K$ has the amalgamation property, $P_3 \in \K$, $\overline{P_3} \in \K$, $N_m \in \K$ for all positive integers $m$, and $K_n \in \K$ for some positive integer $n$, then $\forbG(K_{n+1}) \subseteq \K$.
\end{theorem}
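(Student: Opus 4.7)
The plan is to prove by induction on $|G|$ that every finite $G \in \forbG(K_{n+1})$ lies in $\K$. For the base case, the hereditary property applied to the seeds $K_n$, $N_m$, $P_3$, and $\overline{P_3}$ immediately places every $K_{n+1}$-free graph on at most three vertices in $\K$ (these are $K_1, K_2, N_3, P_3, \overline{P_3}$ and, when $n \geq 3$, also $K_3$, each arising as an induced subgraph of one of the seeds).

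For the inductive step, take $G \in \forbG(K_{n+1})$ with $|G| = m \geq 4$. If $G$ is complete, then $m \leq n$ and $G$ is an induced subgraph of $K_n \in \K$, and we are done via the hereditary property. Otherwise fix a non-universal vertex $v \in G$, and set $G' = G - v$, $G_v = G[\{v\} \cup N_G(v)]$, and $C = G[N_G(v)]$. Then $|G'|, |G_v| < m$, and both are $K_{n+1}$-free (with $G_v$ a cone over the $K_n$-free graph $C$). By the induction hypothesis, $G', G_v, C \in \K$, and both $G'$ and $G_v$ contain $C$ as an induced subgraph. The amalgamation property produces some $H \in \K$ in which $G'$ and $G_v$ are glued along $C$, but $H$ may introduce unwanted edges between $v$ and the vertices of $G' \setminus N_G(v)$, so $H$ need not contain $G$ itself as an induced subgraph.

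The technical heart of the proof is to upgrade the abstract amalgamation property to the following stronger \emph{free amalgamation modulo $K_{n+1}$-freeness} lemma: given $A, B \in \K$ sharing an induced subgraph $C$, and any $K_{n+1}$-free graph $D$ built from $A$ and $B$ by specifying an arbitrary bipartite edge pattern between $A \setminus C$ and $B \setminus C$, one has $D \in \K$. Granting this lemma, the induction closes by applying it with $A = G'$, $B = G_v$, $C = G[N_G(v)]$, and $D = G$.

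The main obstacle is establishing this refinement lemma, which must draw on all four seed hypotheses in concert. The strategy I would pursue is to amalgamate many copies of $A$ and $B$ together over $C$ inside ever-larger graphs of $\K$, and then use the unbounded null graphs $N_m \in \K$ together with a pigeonhole/Ramsey-style argument to pare down to an induced subgraph realizing the desired bipartite edge pattern, invoking the hereditary property to conclude the target graph lies in $\K$. The role of having both $P_3$ and $\overline{P_3}$ in $\K$ is to prevent $\K$ from being forced toward either extreme of the adjacency spectrum: each seed blocks a rigid form that the AP-supplied amalgam might otherwise be driven into, while $K_n \in \K$ witnesses that cliques up to the maximum admissible size are allowed so that the iteration does not outlaw any legitimate configuration. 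Keeping the iterative amalgamation $K_{n+1}$-free at every stage and extracting the exact desired induced subgraph is the most delicate bookkeeping, and will likely require a careful inductive construction tailored to the shape of $D$ rather than a single application of AP.
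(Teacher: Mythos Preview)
The paper does not give its own proof of this statement; Theorem~\ref{Theorem_LachlanWoodrow} is quoted from Lachlan and Woodrow \cite{LW} and used as a black box in the derivation of Corollary~\ref{Corollary_AmalgamationProperty}. So there is nothing in the paper to compare your argument against.

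Evaluating your sketch on its own terms: the inductive scaffold and the diagnosis of the obstacle are both right. The amalgamation property only hands you \emph{some} amalgam of $G'$ and $G_v$ over $C$, and the uncontrolled edges between $v$ and $G'\setminus N_G(v)$ are precisely what must be tamed. However, your ``free amalgamation modulo $K_{n+1}$-freeness'' lemma is essentially a restatement of the theorem, so invoking it is not progress until it is actually proved. The pigeonhole-over-many-copies idea is in the right spirit---Lachlan and Woodrow's original argument does proceed by iterated amalgamation and careful extraction---but your description does not yet make concrete how the seeds $P_3$, $\overline{P_3}$, and the $N_m$ force the existence, inside some member of $\K$, of an amalgam realizing a \emph{prescribed} bipartite pattern rather than merely some pattern. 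In particular, you have not explained what invariant the iterated construction maintains, nor why the process terminates with the target $D$ as an induced subgraph rather than some other $K_{n+1}$-free graph. That step is where all of the work in \cite{LW} lives, and as written your proposal stops exactly at the point where the genuine difficulty begins.
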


%% OLD VERSION %%
%Note that, by Proposition \ref{Proposition_PreserveComplement}, if $\K$ has the amalgamation property, $P_3 \in \K$, $\overline{P_3} \in \K$, $K_m \in \K$ for all positive integers $m$, and $N_n \in \K$ for some positive integer $n$, then $\forbG(N_{n+1}) \subseteq \K$.
%
%As a result, we obtain the following characterization for when a class of finite graphs has the amalgamation property.
%% NEW VERSION %%
In \cite{LW}, they use Theorem \ref{Theorem_LachlanWoodrow} to obtain the following characterization of the amalgamation property for classes of graphs.

\begin{corollary}\label{Corollary_AmalgamationProperty}
	Suppose $\K$ is a class of finite graphs with the hereditary property and suppose that $\K$ contains graphs of arbitrarily large (finite) size.  Then, $\K$ has the amalgamation property if and only if $\K$ is equal to
	\begin{itemize}
		\item $\G$;
		\item $\forbG(K_n)$ for some $n \geq 2$;
		\item $\forbG(N_n)$ for some $n \geq 2$;
		\item $\forbG(P_3)$;
		\item $\forbG(\overline{P_3})$;
		\item $\forbG(P_3, K_n)$ for some $n \geq 3$;
		\item $\forbG(\overline{P_3}, K_n)$ for some $n \geq 3$;
		\item $\forbG(P_3, N_n)$ for some $n \geq 3$; or
		\item $\forbG(\overline{P_3}, N_n)$ for some $n \geq 3$.
	\end{itemize}
\end{corollary}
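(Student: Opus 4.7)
The plan is to prove both directions. The ``if'' direction amounts to verifying the amalgamation property for each listed class. For $\G$ this is classical; for $\forbG(K_n)$ it is well-known (and underpins the definition of the Henson graphs); for $\forbG(P_3)$ (disjoint unions of cliques) and its bounded variants the free amalgam suffices; and Proposition \ref{Proposition_PreserveComplement}(2) transports AP to each complementary class.

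For the ``only if'' direction, assume $\K$ has AP. Since $\K$ has the hereditary property and contains graphs of arbitrarily large size, Ramsey's theorem forces either $K_n \in \K$ for all $n$ or $N_n \in \K$ for all $n$; since the list is closed under complementation (via \eqref{Equation_ForbComplement}), we may assume $K_n \in \K$ for all $n$ using Proposition \ref{Proposition_PreserveComplement}. We then split on whether $P_3, \overline{P_3} \in \K$. If both are in $\K$, hereditary yields $N_2 \in \K$; letting $m = \sup\{k : N_k \in \K\}$, if $m = \infty$, Theorem \ref{Theorem_LachlanWoodrow} applied at each $n$ gives $\forbG(K_{n+1}) \subseteq \K$, so $\K = \G$, while if $m$ is finite, the complement version of Theorem \ref{Theorem_LachlanWoodrow} noted in the text gives $\K = \forbG(N_{m+1})$ with $m+1 \geq 3$. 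If $\overline{P_3} \notin \K$, then $\K \subseteq \forbG(\overline{P_3})$ (i.e., $\K$ consists of complete multipartite graphs), and one argues directly using AP that $\K$ must equal $\forbG(\overline{P_3})$ or $\forbG(\overline{P_3}, N_{m+1})$; the case $P_3 \notin \K$ is symmetric by complementation. Finally, if neither $P_3$ nor $\overline{P_3}$ lies in $\K$, an elementary argument shows $\forbG(P_3, \overline{P_3}) = \{K_n : n \ge 0\} \cup \{N_n : n \ge 0\}$, and AP rules out having both $K_2$ and $N_2$ in $\K$ (any amalgam of $K_2$ and $N_2$ over a single point produces $P_3$ or $\overline{P_3}$ as an induced subgraph), giving $\K = \forbG(N_2)$.

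The main obstacle is the case $\overline{P_3} \notin \K$ (symmetrically $P_3 \notin \K$): Theorem \ref{Theorem_LachlanWoodrow} does not apply, and one must directly use AP. The key observation is that because $\K \subseteq \forbG(\overline{P_3})$, every amalgam produced by AP automatically lies in the class of complete multipartite graphs, which constrains its structure. Amalgamating $K_r$ with $N_k$ over a single shared vertex produces, as an induced subgraph of the amalgam, the complete multipartite graph with $r-1$ singleton parts and one part of size $k$; iterating these amalgamations between known members of $\K$, together with the hereditary property, establishes that $\K$ contains every complete multipartite graph of the appropriate size, yielding $\K = \forbG(\overline{P_3})$ or $\K = \forbG(\overline{P_3}, N_{m+1})$ for $m = \sup\{k : N_k \in \K\}$.
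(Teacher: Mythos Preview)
Your argument is correct and follows essentially the same route as the paper: Ramsey to force arbitrarily large cliques or independent sets, a case split on membership of $P_3$ and $\overline{P_3}$, Theorem~\ref{Theorem_LachlanWoodrow} for the main case, and a direct amalgamation argument when $P_3$ or $\overline{P_3}$ is excluded. The paper does not pass to a WLOG via complementation but instead tracks both $n = \sup\{k : K_k \in \K\}$ and $m = \sup\{k : N_k \in \K\}$ in parallel; it also absorbs your ``neither'' case into the earlier observation that $n = 1$ or $m = 1$ already pins down $\K$, rather than via your separate amalgamation of $K_2$ and $N_2$ over a point. Your iterated-amalgamation description for the $\overline{P_3} \notin \K$ case is more explicit than the paper's one-line appeal to HP and AP, but the content is the same.

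One small imprecision: once you have assumed WLOG that all $K_n \in \K$, the sub-case $P_3 \notin \K$ (with $\overline{P_3} \in \K$) is not literally ``symmetric by complementation'' to the sub-case $\overline{P_3} \notin \K$, since complementing would swap your standing hypothesis ``all $K_n \in \K$'' for ``all $N_n \in \K$,'' and your iterated-amalgamation argument as written uses the former. The fix is immediate --- run the same construction inside $\forbG(P_3)$ (disjoint unions of cliques) rather than $\forbG(\overline{P_3})$: amalgamating $N_k$ with $K_r$ over a point and iterating produces every disjoint union of at most $m$ cliques, giving $\K = \forbG(P_3)$ or $\forbG(P_3, N_{m+1})$ --- but the phrase should read ``by the analogous argument'' rather than ``by complementation.''
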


\begin{remark}
	As a result of Corollary \ref{Corollary_AmalgamationProperty}, most of the graph classes considered in this paper do not have the amalgamation property.  Certainly nothing from Section \ref{Section_Forbidden}.  On the other hand, by Proposition \ref{Proposition_classifyKlarge}, $\K_\alpha$ has the amalgamation property if and only if $\alpha > \frac{3}{2}$ (indeed, $\K_\alpha = \forbG(P_3, K_3)$ for $\alpha \in (\frac{3}{2},2]$ and $\K_\alpha = \forbG(K_2)$ for $\alpha > 2$).  Similarly, $\K^+_\alpha$ has the amalgamation property if and only if $\alpha \geq \frac{3}{2}$.
\end{remark}

Indivisibility for the classes of graphs with the amalgamation property is known or can be easily shown.

\begin{proposition}
	The following are indivisible:
	\begin{itemize}
		\item $\G$;
		\item $\forbG(K_n)$ for some $n \geq 2$;
		\item $\forbG(N_n)$ for some $n \geq 2$;
		\item $\forbG(P_3)$; and
		\item $\forbG(\overline{P_3})$.
	\end{itemize}
	Furthermore, the following are not indivisible:
	\begin{itemize}
		\item $\forbG(P_3, K_n)$ for some $n \geq 3$;
		\item $\forbG(\overline{P_3}, K_n)$ for some $n \geq 3$;
		\item $\forbG(P_3, N_n)$ for some $n \geq 3$; and
		\item $\forbG(\overline{P_3}, N_n)$ for some $n \geq 3$.
	\end{itemize}
\end{proposition}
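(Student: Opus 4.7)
The plan is to dispatch the nine items in turn, leaning on Theorem \ref{Theorem_Miriams}, Proposition \ref{Proposition_PreserveComplement}, the complement identity \eqref{Equation_ForbComplement}, and classical Ramsey theory. The complement identity cuts the work roughly in half: once $\forbG(K_n)$ is handled we get $\forbG(N_n)$ for free, and similarly $\forbG(\overline{P_3})$ from $\forbG(P_3)$, $\forbG(\overline{P_3}, K_n)$ from $\forbG(P_3, N_n)$, and $\forbG(\overline{P_3}, N_n)$ from $\forbG(P_3, K_n)$.

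For the indivisible items, $\G$ itself is closed under lexicographic product by inspection of Definition \ref{Definition_LexProduct}, so Theorem \ref{Theorem_Miriams} applies. For $n \geq 3$, indivisibility of $\forbG(K_n)$ is the result of \cite{EZS89} quoted in the introduction; for $n = 2$, $\forbG(K_2)$ is just the class of finite null graphs, which is indivisible by pigeonhole. Indivisibility of $\forbG(N_n)$ then follows via Proposition \ref{Proposition_PreserveComplement}(3) and \eqref{Equation_ForbComplement}, and indivisibility of $\forbG(\overline{P_3})$ will follow the same way once $\forbG(P_3)$ is done.

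The case $\forbG(P_3)$ is not handled by Theorem \ref{Theorem_Miriams}, since $\forbG(P_3)$ is not closed under lexicographic product (for instance $K_2[N_2] \cong C_4 \notin \forbG(P_3)$). Instead I would argue directly: $\forbG(P_3)$ is exactly the class of disjoint unions of cliques, so given $A \in \forbG(P_3)$ with $s$ connected components each of size at most $n$, I take $B$ to be the disjoint union of $2s$ copies of $K_{R(n,n)}$, where $R(n,n)$ is the diagonal Ramsey number. Under any $2$-coloring of $B$, Ramsey's theorem produces a monochromatic copy of $K_n$ inside each copy of $K_{R(n,n)}$, and pigeonhole yields at least $s$ of these in a common color, which accommodates a monochromatic embedding of $A$.

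For the non-indivisible items, after the complement reduction it suffices to treat $\forbG(P_3, K_n)$ and $\forbG(P_3, N_n)$ for $n \geq 3$. For $\forbG(P_3, K_n)$ I would take $A = K_{n-1}$: any $B$ in the class is a disjoint union of cliques of size at most $n-1$, and coloring each clique as balanced as possible keeps every monochromatic subclique of size at most $\lceil(n-1)/2\rceil < n-1$ when $n \geq 4$, while for $n = 3$ a proper $2$-coloring of the matching $B$ avoids monochromatic $K_2$. For $\forbG(P_3, N_n)$ I would take $A = N_{n-1}$: any $B$ is a disjoint union of at most $n-1$ cliques, and (assuming $B$ has exactly $n-1$ cliques $C_1, \dots, C_{n-1}$, else $A$ cannot embed at all) coloring $C_1$ entirely with color $1$ and $C_2$ entirely with color $2$ ensures neither color class meets every $C_i$, blocking any monochromatic copy of $N_{n-1}$. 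There is no substantive obstacle here; the only care needed is in checking the small edge cases ($n = 2, 3$) and in keeping the complement identities straight.
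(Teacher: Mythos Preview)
Your approach matches the paper's almost exactly: the same complement reductions via Proposition~\ref{Proposition_PreserveComplement} and \eqref{Equation_ForbComplement}, the same appeal to \cite{EZS89} and Theorem~\ref{Theorem_Miriams} for the indivisible items, and the same witnesses $A = K_{n-1}$ and $A = N_{n-1}$ for the non-indivisible ones. The paper's colorings for $\forbG(P_3, K_n)$ and $\forbG(P_3, N_n)$ are slightly slicker than yours (respectively: in each component of size at least two, color one vertex $1$ and the rest $2$; color one whole component $1$ and the rest $2$), but your balanced-coloring and two-component arguments are correct as well.

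There is one slip worth fixing. In your $\forbG(P_3)$ argument you invoke Ramsey's theorem to find a monochromatic $K_n$ inside $K_{R(n,n)}$, but Ramsey's theorem concerns \emph{edge} colorings, whereas here you are vertex-coloring a complete graph. The statement as written actually fails for $n=2$: $R(2,2)=2$, and a vertex $2$-coloring of $K_2$ can assign the two vertices different colors, leaving no monochromatic $K_2$. What you really want is pigeonhole: in any vertex $2$-coloring of $K_{2n-1}$ some color class has at least $n$ vertices, and those vertices induce a monochromatic $K_n$. With that correction (take $2s$ disjoint copies of $K_{2n-1}$ and apply pigeonhole twice, once inside each clique and once across the $2s$ cliques), your argument goes through. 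The paper compresses this entire case to the phrase ``follows from the Pigeonhole Principle.''
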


\begin{proof}
	The indivisibility of $\G$ can be found, for example, in \cite{EZS89}.  It also follows from Theorem \ref{Theorem_Miriams}.  Indivisibility for $\forbG(K_n)$ for all $n \geq 2$ is in \cite{Folk70}.  By Proposition \ref{Proposition_PreserveComplement}, $\forbG(N_n)$ is also indivisible for all $n \geq 2$.  Indivisibility for $\forbG(P_3)$ follows from the Pigeonhole Principle and indivisibility of $\forbG(\overline{P_3})$ then follows from Proposition \ref{Proposition_PreserveComplement}.
	
	To see that $\forbG(P_3, K_n)$ is not indivisible for $n \geq 3$, take $A = K_{n-1}$, which is clearly in $\forbG(P_3, K_n)$.  For any $B \in \forbG(P_3, K_n)$, each component of $B$ must be a clique of size at most $n-1$.  For each component of size at least two, choose one vertex to color $1$ and color the rest $2$.  Color the single-vertex components arbitrarily.  Then, $B$ contains no monochromatic copy of $A$.  By Proposition \ref{Proposition_PreserveComplement}, $\forbG(\overline{P_3}, N_n)$ is also not indivisible for $n \geq 3$.
	
	To see that $\forbG(P_3, N_n)$ is not indivisible for $n \geq 3$, take $A = N_{n-1}$, which is clearly in $\forbG(P_3, N_n)$.  For any $B \in \forbG(P_3, N_n)$, $B$ must have at most $n-1$ components, each of which is a clique.  Color one component $1$ and the rest $2$.  Then, $B$ contains no monochromatic copy of $A$.  By Proposition \ref{Proposition_PreserveComplement}, $\forbG(\overline{P_3}, K_n)$ is also not indivisible for $n \geq 3$.
\end{proof}

%%%%%%%%%%%%%%%%%%%%%%%%%
%% SECTION: Conclusion %%
%%%%%%%%%%%%%%%%%%%%%%%%%

\section{Future Directions}\label{Section_Conclusion}

We are interested in studying what other graph classes are indivisible.  For some graph classes, the answer is known, like planar graphs (see Example 2.10 of \cite{GPS}).  For others, it follows from Proposition \ref{Proposition_ForbidF}, as they are classified by forbidding sufficiently large paths, cycles, or their complements.  However, there are other classes of graphs for which indivisibility is unknown.

We are also interested in studying generalizations of indivisibility.  For example, there is a notion of ``strong substructure'' $\leq$ for classes of $L$-structures $\K$ that generalizes the notion of substructure in the usual (model-theoretic) sense (e.g., \cite{BS}).  Replacing ``substructure'' with ``strong substructure'' generalizes notions like the amalgamation property.     When $(\K, \leq)$ has the amalgamation property, it has a generic limit akin to the Fra\"{i}ss\'{e} limit \cite{BS}.  The classes $\K_\alpha$ and $\K^+_\alpha$ each come with a strong substructure notion, $\leq_\alpha$ and $\leq^+_\alpha$ respectively, such that $(\K_\alpha, \leq_\alpha)$ and $(\K^+_\alpha, \leq^+_\alpha)$ have the amalgamation property (e.g., \cite{BL}), hence these have generic limits.  If $\K$ is a class of $L$-structures and $\leq$ is a strong notion of substructure, we can say that $(\K, \leq)$ is \emph{indivisible} if, for all $A \in \K$ and positive integers $k$, there exists $B \in \K$ such that, for all $k$-colorings $c$ of $B$, there exists a monochromatic copy $f$ of $A$ in $B$ with respect to $c$ such that $f(A) \leq B$.  We can ask which pairs $(\K, \leq)$ are indivisible.  With respect to hereditarily $\alpha$-sparse graphs, it is clear that $(\K_\alpha, \leq_\alpha)$ is indivisible if and only if $\alpha > 2$ and $(\K^+_\alpha, \leq^+_\alpha)$ is indivisible if and only if $\alpha \geq 2$.  However, this is an interesting question for other pairs $(\K, \leq)$, including other graph classes.  Alternatively, for indivisibility of $(\K, \leq)$ we can color strong embeddings of singleton structures instead of elements of the structure.  What pairs $(\K, \leq)$ have this kind of indivisibility?

Another potential generalization of indivisibility would be to ask whether, instead of reducing to a single color, it is possible to reduce to a fixed finite number of colors, similar to the notion of small Ramsey degree.  Recall that, for $n \geq 3$, $\forbG(P_3, N_n)$ is not indivisible.  However, it is possible to reduce any finite coloring down to $n$ colors; i.e., for all $A \in \forbG(P_3, N_n)$, there exists $B \in \forbG(P_3, N_n)$ such that, for all $k > n$ and $k$-colorings $c$ of $B$, there exists an embedding $f : A \rightarrow B$ such that $|c(f(A))| \leq n$.  What other classes of structures (including graph classes) have this property?

\begin{bibdiv}
\begin{biblist}

\bib{BS}{article}{
	author = {J. Baldwin},
	author = {N. Shi},
	title = {Stable generic structures},
	journal = {Annals of Pure and Applied Logic},
	volume = {79},
	number = {1},
	pages = {1-35},
	year = {1996}
}

\bib{BM}{article}{
	title = {Distance-hereditary graphs},
	journal = {Journal of Combinatorial Theory, Series B},
	volume = {41},
	number = {2},
	pages = {182-208},
	year = {1986},
	author = {H.-J. Bandelt},
	author = {H. M. Mulder}
}

\bib{BL}{article}{
	author = {J. Brody},
	author = {M. C. Laskowski},
	title = {On rational limits of Shelah-Spencer graphs},
	volume = {77},
	journal = {Journal of Symbolic Logic},
	number = {2},
	pages = {580-592},
	year = {2012}
}

\bib{CRST}{article}{
	title = {The strong perfect graph theorem},
	author = {M. Chudnovsky},
	author = {N. Robertson},
	author = {P. Seymour},
	author = {R. Thomas},
	journal = {Annals of Mathematics},
	volume = {164},
	number = {1},
	pages = {51-229},
	year = {2006}
}

\bib{CH}{article}{
    title={Aggregation of inequalities in integer programming.},
    author={V. Chv{\'a}tal and P. Hammer},
    journal={Annals of discrete mathematics},
    year={1975},
    volume={1},
    pages={145-162}
}

\bib{DHKZ}{article}{
	author = {S. Diskin and I. Hoshen and M. Krivelevich and M. Zhukovskii},
	title = {On vertex Ramsey graphs with forbidden subgraphs},
	journal = {Discrete Mathematics},
	volume = {347},
	number = {3},
	pages = {113806},
	year = {2024}
}

\bib{EZS89}{article}{
	author = {M. El-Zahar},
	author = {N. W. Sauer},
	title = {The Indivisibility of the Homogeneous $K_n$-Free Graphs},
	journal = {Journal of Combinatorial Theory},
	volume = {47},
	pages = {162-170},
	year = {1989}	
}

\bib{EZS91}{article}{
	author = {M. El-Zahar},
	author = {N. W. Sauer},
	title = {Ramsey-type properties of relational structures},
	journal = {Discrete Mathematics},
	volume = {94},
	number = {1},
	pages = {1-10},
	year = {1991}
}

\bib{Folk70}{article}{
	author = {J. Folkman},
	title = {Graphs with monochromatic complete subgraphs in every edge coloring},
	journal = {SIAM Journal on Applied Mathematics},
	volume = {18},
	number = {1},
	pages = {19-24},
	year = {1970}
}

\bib{Fr2000}{book}{
	title = {Theory of Relations, Studies in Logic and the Foundations of
Mathematics},
	author = {R. Fra\"{i}ss\'{e}},
	publisher = {North-Holland Publishing Co.},
	address = {Amsterdam},
	volume = {145},
	year = {2000}
}

\bib{GP}{article}{
	author = {V. Guingona},
	author = {M. Parnes},
	title = {Ranks based on strong amalgamation Fra\"{i}ss\'{e} classes},
	journal = {Archive for Mathematical Logic},
	year = {2023},
	pages = {1-41}
}

\bib{GPS}{article}{
	author = {V. Guingona},
	author = {M. Parnes},
	author = {L. Scow},
	title = {Products of classes of finite structures},
	journal = {Notre Dame Journal of Formal Logic},
	year = {2023},
	note = {To appear}
}

\bib{HS}{article}{
	author = {P. Hammer},
	author = {B. Simeone},
	title = {The splittance of a graph},
	journal = {Combinatoria},
	volume = {1},
	pages = {275-284},
	year = {1981}
}

\bib{Hen71}{article}{
	author = {C. W. Henson},
	title = {A family of countable homogeneous graphs},
	volume = {38},
	journal = {Pacific Journal of Mathematics},
	number = {1},
	publisher = {Pacific Journal of Mathematics, A Non-profit Corporation},
	pages = {69-83},
	year = {1971}
}

\bib{Hodges}{book}{
	title = {Model Theory},
	author = {W. Hodges},
	publisher = {Cambridge University Press},
	address = {Cambridge},
	volume = {42},
	year = {1993}
}

\bib{How}{article}{
	author = {Howorka, Edward},
	title = {A characterization of distance-hereditary graphs},
	journal = {The Quarterly Journal of Mathematics},
	volume = {28},
	number = {4},
	pages = {417-420},
	year = {1977}
}

\bib{H}{article}{
	title = {A new strongly minimal set},
	author = {E. Hrushovski},
	journal = {Annals of Pure and Applied Logic},
	volume = {62},
	pages = {147-166},
	year = {1993}
}

\bib{KR}{article}{
	title = {Coloring of universal graphs},
	author = {P. Komj\'{a}th and V. R\"{o}dl},
	journal = {Graphs and Combinatorics},
	volume = {2},
	pages = {55-60},
	year = {1986}	
}

\bib{LW}{article}{
	author = {A. H. Lachlan},
	author = {R. Woodrow},
	title = {Countable ultrahomogeneous undirected graphs},
	journal = {Transactions of the American Mathathematical Society},
	volume = {262},
	number = {1},
	date = {1980}
}

\bib{LT}{article}{
	author = {M. C. Laskowski},
	author = {C. Terry},
	title = {Jumps in speeds of hereditary properties in finite relational languages},
	journal = {Journal of Combinatorial Theory, Series B},
	volume = {154},
	pages = {93-135},
	year = {2022}
}

\bib{Meir}{article}{
	author = {N. Meir},
	title = {On products of elementarily indivisible structures},
	journal = {The Journal of Symbolic Logic},
	volume = {81},
	number = {3},
	date = {2016}
}

\bib{N05}{article}{
	author = {J. Ne\v{s}et\v{r}il},
	title = {Ramsey classes and homogeneous structures},
	journal = {Combinatorics, Probability and Computing},
	volume = {14},
	number = {1-2},
	pages = {171-189},
	year = {2005}
}

\bib{NR}{article}{
	author = {J. Ne\v{s}et\v{r}il and V. R\"{o}dl},
	title = {Partitions of vertices},
	journal = {Commentationes Mathematicae Universitatis Carolinae},
	volume = {17},
	number = {1},
	pages = {85-95},
	year = {1976}
}

\bib{RP}{article}{
	author = {G. Ravindra},
	author = {K.R. Parthasarathy},
	title = {Perfect product graphs},
	journal = {Discrete Mathematics},
	volume = {20},
	pages = {177-186},
	year = {1977},
}

\bib{RY}{article}{
	author = {G. Ringel}, 
	author = {J.W. Youngs},
	title = {Solution of the heawood map-coloring problem},
	journal = {Proc Natl Acad Sci},
	year = {1968},
	volume = {60},
	number = {2},
	pages = {438-445}
}

\bib{RS}{article}{
	author={V. R{\"o}dl and N. Sauer},
	title={The Ramsey Property for Families of Graphs Which Exclude a Given Graph},
	journal={Canadian Journal of Mathematics},
	year={1992},
	volume={44},
	pages={1050-1060}
}

\bib{RSZ}{article}{
	author={V. R{\"o}dl and N. Sauer and X. Zhu},
	title={Ramsey families which exclude a graph},
	journal={Combinatorica},
	year={1995},
	volume={15},
	pages={589–596}
}

\bib{Sau03}{article}{
	author = {N. Sauer},
	title = {Canonical vertex partitions},
	journal = {Combinatorics, Probability and Computing},
	volume = {12},
	number = {5-6},
	pages = {671-704},
	year = {2003}
}

\bib{Sau14}{article}{
	author = {N. Sauer},
	title = {Age and weak indivisibility},
	journal = {European Journal of Combinatorics},
	year = {2014},
	volume = {37},
	pages = {24-31}
}

\bib{Sau20}{article}{
	author = {N. Sauer},
	year = {2020},
	pages = {},
	title = {Colouring homogeneous structures},
	note = {preprint}
}

\bib{West}{book}{
	author = {West, D.},
	edition = {2},
	publisher = {Prentice Hall},
	title = {Introduction to Graph Theory},
	year = {2000}
}

\end{biblist}
\end{bibdiv}

\end{document}